\newcommand{\ev}{{\rm Eval}}
\newcommand{\card}{{\rm Card}} 
\theoremstyle{plain} 
\newtheorem{theorem}{\indent\sc Theorem}[section]
\newtheorem{lemma}[theorem]{\indent\sc Lemma}
\newtheorem{proposition}[theorem]{\indent\sc Proposition}
\theoremstyle{definition} 
\newtheorem{definition}[theorem]{\indent\sc Definition}
\newtheorem{remark}[theorem]{\indent\sc Remark}
\newtheorem{notation}[theorem]{\indent\sc Notation}
\newcommand{\C}{\mathbb{C}} 
\newcommand{\R}{\mathbb{R}}
\newcommand{\ru}{{\R}}
\newcommand{\Q}{\mathbb{Q}}
\newcommand{\Z}{\mathbb{Z}} 
\newcommand{\qu}{{\Q}}
\newcommand{\N}{\mathbb{N}}
\newcommand{\const}{{rm\log(2)\rule{0mm}{4mm}+r\left(\log(rm+1)+rm\log\left(\rule{0mm}{3.5mm}\frac{rm+1}{rm}\right)\right)}}
\def\2{I\hspace{-.1em}I}
\title{Generalized hypergeometric $G$-functions \\ take linear independent values}
\author{\textsc{Sinnou David}, \textsc{Noriko Hirata-Kohno} and \textsc{Makoto Kawashima}}
\date{2022, March 1}
\begin{document}

\maketitle

%

\begin{abstract}
In this article, we show a new general linear independence criterion related to values of $G$-functions,
including the linear independence of values at algebraic points 
of contiguous hypergeometric functions, which is not known before.
Let $K$ be any algebraic number field and $v$ be a place of $K$.
Let $r\in\Z$ with $r\ge2$. Consider $a_1,\ldots,a_{r}, b_1,\ldots,b_{r-1}\in \Q\setminus\{0\}$ not being negative integers.
Assume neither $a_k$ nor $a_k+1-b_j$ be strictly positive integers $(1\le k \le r, 1\le j \le r-1)$.
Let $\alpha_1,\ldots,\alpha_m\in K\setminus\{0\}$ with $\alpha_1,\ldots,\alpha_m$ pairwise distinct.
By choosing sufficiently large $\beta\in \Z$ depending on $K$ and $v$ such that the points $\alpha_1/\beta,\ldots,\alpha_m/\beta$ are closed enough to the origin,
we prove that the $rm+1$ numbers~$:$
\begin{align*}
&{}_{r}F_{r-1} \biggl(\begin{matrix} a_1,\ldots, a_r\\ b_1, \ldots, b_{r-1} \end{matrix} \biggm| \dfrac{\alpha_i}{\beta}\biggr)\enspace, \ \ 
{}_{r}F_{r-1} \biggl(\begin{matrix} a_1+1,\ldots,\ldots,\ldots,a_r+1\\ b_1+1, \ldots, b_{r-s}+1,b_{r-s+1},\ldots,b_{r-1} \end{matrix} \biggm| \dfrac{\alpha_i}{\beta}\biggr)\enspace\\
&(1\le i \le m, 1\le s \le r-1)\end{align*}
and $1$ are linearly independent over $K$.
The essential ingredient is our \emph{term-wise formal} construction  of type II of Pad\'e approximants
together with new non-vanishing argument for the generalized Wronskian.
\end{abstract}

\textit{Key words}:~
Generalized hypergeometric function, $G$-function, linear independence, the irrationality, Pad\'e approximation.

\maketitle

\section{Introduction}

{{The generalized hypergeometric $G$-function, in the sense of C.~L.~Siegel, is one of central objects from analytic point of view as well as number theoretical interest.
In the article, we study arithmetic properties of values of the generalized hypergeometric functions, relying on Pad\'e approximations of type II.
We provide a new general linear independence criterion for the values of the functions at several distinct points, over a given algebraic number field of any finite degree.
Our statement  extends previous ones due to D.~V.~Chudnovsky or D.~V.~Chudnovsky-G.~V.~Chudnovsky  in  \cite[Theorem 3.1]{ch2}
 \cite[Theorem I]{ch11},  \cite[Theorem 0.3]{ch12}  \cite[Theorem I]{Chubrothers} and Yu. Nesterenko \cite[Theorem 1]{Nest1} \cite[Theorem 1]{Nest}, which all
dealt with values at one point and over the rational number field.
We proceed constructions of Pad\'e approximants by our formal method, generalizing that used in \cite{DHK2,DHK3,DHK4}.
We are inspired,  together with those quoted above, by works due to 
A.~I.~Galochkin in  \cite{G1,G2}, V.~N.~Sorokin in \cite{Sorokin}, K.~V$\ddot{\text{a}}$$\ddot{\text{a}}$n$\ddot{\text{a}}$nen in  \cite{Va} 
and W. Zudi\-lin in  \cite{Z}, which gave several linear independence criteria, either over the field of rational numbers or quadratic imaginary fields, 
of values those concerns polylogarithmic function or
hypergeometric $G$-function. However, these previous results were either for
values at only one point, or in the case where the ground field was limited.
As related works, we refer to the algebraic independence announced in \cite[Theorem 3.4]{Chubrothers2}  of the two special values of
Gauss' hypergeometric functions
${}_{2}F_{1} \biggl(\begin{matrix} \dfrac{1}{2}, \dfrac{1}{2}\\ 1 \end{matrix} \biggm| \alpha\biggr)\enspace$ and
${}_{2}F_{1} \biggl(\begin{matrix} -\dfrac{1}{2}, \dfrac{1}{2}\\ 1 \end{matrix} \biggm|  \alpha\biggr)\enspace$ when $ \alpha$ is a non-zero algebraic number
supposed to be of small module, that later proved by Y.~Andr\'e in \cite{andre} with the $p$-adic analogue}.
We  also mention that the work by  F.~Beukers involves several algebraicity of values of the function \cite{beu2,B}.
A historical survey for further reference is given in \cite{DHK2, DHK3}, with comparaison which concerns earlier works. }

{This criterion indeed shows the linear independence of values of  generalized hypergeometric functions  including the contiguous ones,
whose functional linear independence
 has been discussed in  \cite{Nest1,Nest}.
Our contribution in the proof, if any, is an uncharted non-vanishing property for the generalized Wronskian of Hermite type, corresponding to the case of generalized hypergeometric $G$-function.}

\section{Notations and main result}
We collect some notations which we use throughout the article. 
Let $\Q$ be the rational number field and  $K$ be an algebraic number field of arbitrary degree $[K:\Q] < \infty$.
Let us denote by $\N$ the set of strictly positive integers.
We denote the set of places of $K$  by ${{\mathfrak{M}}}_K$ (by ${\mathfrak{M}}^{\infty}_K$ for infinite places, by ${{\mathfrak{M}}}^{f}_K$
for finite places, respectively).
For $v\in {{\mathfrak{M}}}_K$, we denote the completion of $K$ with respect to $v$ by $K_v$, and 
{{the completion of an algebraic closure of $K_v$ by  $\mathbb C_v$ (resp. for $v\in {\mathfrak{M}}^{\infty}_K$,
for $v\in{{\mathfrak{M}}}^{f}_K$)
}}.

Let $p, q\in \N$. Let $a_1,\ldots,a_{p}, b_1,\ldots,b_{q}\in \Q\setminus\{0\}$ be non-negative integer.
We define the generalized hypergeometric function by
\begin{eqnarray*}
{}_{p}F_{q} \biggl(\begin{matrix} a_1,\ldots, a_p\\ b_1, \ldots, b_{q} \end{matrix} \biggm| z\biggr)
=\displaystyle\sum_{k=0}^{\infty}\dfrac{(a_1)_k \cdots (a_{p})_k}{(b_1)_k\cdots(b_{q})_k}\dfrac{z^k}{k!}\enspace,
\end{eqnarray*}
where $(a)_k$ is the Pochhammer symbol: $(a)_0=1$, $(a)_k=a(a+1)\cdots (a+k-1)$.

For a rational number $x$, let us define $$\mu(x)={\displaystyle{\prod_{\substack{q:\text{prime} \\ q|{\rm{den}}(x)}}}}q^{q/(q-1)}\enspace.$$


Let us denote the normalized absolute value $| \cdot |_v$ for $v\in {{\mathfrak{M}}}_K$~:
\begin{align*}
&|p|_v=p^{-\tfrac{[K_v:\Q_p]}{[K:\Q]}} \ \text{if} \ v\in{{\mathfrak{M}}}^{f}_K \ \text{and} \ v\mid p\enspace,\\
&|x|_v=|\iota_v x|^{\tfrac{[K_v:\R]}{[K:\Q]}} \ \text{if} \ v\in {{\mathfrak{M}}}^{\infty}_K\enspace,
\end{align*}
where $p$ is a prime number and $\iota_v$ the embedding $K\hookrightarrow \C$ corresponding to $v$. 
{On $K_v^n$, the norm $\|\cdot\|_v$ denotes the norm of the supremum.}
Then we have the product formula 
\begin{align*} 
\prod_{v\in {{\mathfrak{M}}}_K} |\xi|_v=1 \ \ \text{for} \ \ \xi \in K\setminus\{0\}\enspace.
\end{align*}

\
 
Let $m$ be a positive integer and $\boldsymbol{\beta}:=(\beta_0,\ldots,\beta_m) \in K^{m+1} \setminus\{\bold{0}\}$.  
Define the absolute height of $\boldsymbol{\beta}$ by
\begin{align*}
&{\mathrm{H}}(\boldsymbol{\beta})=\prod_{v\in {{\mathfrak{M}}}_K} \max\{1, |\beta_0|_v,\ldots,|\beta_m|_v\}\enspace,
\end{align*}
{{and logarithmic absolute height by ${\rm{h}}(\boldsymbol{\beta})={\rm{log}}\, \mathrm{H}(\boldsymbol{\beta})$. 
Let $v\in \mathfrak{M}_K$. We denote ${\rm{log}}\max(1,|\beta_i|_v)$ by ${\mathrm{h}}_v(\boldsymbol{\beta})$. 
Then we have ${\mathrm{h}}(\boldsymbol{\beta})={\displaystyle{\sum_{v\in \mathfrak{M}_K}}}{\mathrm{h}}_v(\boldsymbol{\beta})$.
For a finite set $S\subset \overline{\Q}$, we define the denominator of $S$ by $${\rm{den}}(S)=\min \{1\le n\in \Z\mid n\alpha \ \text{are algebraic integer for all} \ \alpha \in S \}\enspace.$$}}

Let $m,r$ be strictly positive integers with $r\ge2$ and $\boldsymbol{\alpha}:=(\alpha_1,\ldots,\alpha_m)\in (K\setminus\{0\})^m$ whose coordinates are pairwise distinct. 
For $\beta\in K\setminus\{0\}$, define a real number 
\begin{align*}
V_v(\boldsymbol{\alpha},\beta) &=\displaystyle  \log\vert\beta\vert_{v_0}-rm{\mathrm{h}}(\boldsymbol{\alpha},\beta)-{{(rm+1)}}\log\, \|\boldsymbol{\alpha}\|_{v_0}+rm\log\, \|(\boldsymbol{\alpha},\beta)\|_{v_0}\\ 
&-\left(\const\right)\\
&-\sum_{j=1}^r\left(\log\mu(a_j)+2\log\mu(b_j)+\dfrac{{\rm{den}}(a_j){\rm{den}}(b_j)}{\varphi({\rm{den}}(a_j))\varphi({\rm{den}}(b_j))} \right)\enspace,
\end{align*}
where $\varphi$ is the Euler's totient function.

\

Now we are ready to state our main theorem.
\begin{theorem} \label{hypergeometric} 
Let $v_0$ be a place of $K$.
Let $a_1,\ldots,a_{r}, b_1,\ldots,b_{r-1}\in \Q\setminus\{0\}$ be non-negative integer.
Assume neither $a_k$ nor $a_k+1-b_j$ be strictly positive integers $(1\le k \le r, 1\le j \le r-1)$.
Suppose $V_{v_0}(\boldsymbol{\alpha},\beta)>0$. Then the $rm+1$ numbers~$:$
\begin{align*}
&{}_{r}F_{r-1} \biggl(\begin{matrix} a_1,\ldots, a_r\\ b_1, \ldots, b_{r-1} \end{matrix} \biggm| \dfrac{\alpha_i}{\beta}\biggr)\enspace, \ \ 
{}_{r}F_{r-1} \biggl(\begin{matrix} a_1+1,\ldots,\ldots,\ldots,a_r+1\\ b_1+1, \ldots, b_{r-s}+1,b_{r-s+1},\ldots,b_{r-1} \end{matrix} \biggm| \dfrac{\alpha_i}{\beta}\biggr)\enspace
\end{align*}
$(1\le i \le m, 1\le s \le r-1)$ and $1$ are linearly independent over $K$.
\end{theorem}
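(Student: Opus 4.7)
The plan is to attack Theorem~\ref{hypergeometric} through the Hermite--Siegel machinery of Pad\'e approximants of type II, with two adaptations advertised by the authors: a \emph{formal, term-wise} construction of approximants that simultaneously handles all $r$ contiguous hypergeometric families at $m$ distinct evaluation points, and a new non-vanishing argument for the associated generalized Wronskian. The $rm+1$ target numbers organize themselves into one trivial entry (the number $1$) together with the values $f_{i,s}(\alpha_i/\beta)$ for $1\le i\le m$ and $0\le s\le r-1$, where $f_{i,0}$ is ${}_rF_{r-1}$ and $f_{i,s}$ (for $s\ge 1$) is its $s$-th shifted sibling appearing in the statement.

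First I would construct, for each integer $n\ge 1$, a single polynomial $P_0(z)\in K[z]$ together with polynomials $P_{i,s}(z)\in K[z]$ such that the remainders $R_{i,s}(z)=P_0(z)f_{i,s}(z)-P_{i,s}(z)$ vanish at $z=0$ to a prescribed high order. Following the formal method of \cite{DHK2,DHK3,DHK4}, the coefficients are written term by term so that the order of vanishing can be verified directly and the arithmetic structure stays under control. The explicit shape then feeds three estimates: an arithmetic bound on the common denominator of the coefficients of the $P_{i,s}$ (producing the factors $\mu(a_j)$, $\mu(b_j)$ and the Euler-totient contributions in $V_{v_0}$), an archimedean size bound for $P_0(\alpha_i/\beta)$ and $P_{i,s}(\alpha_i/\beta)$ in terms of $\|(\boldsymbol{\alpha},\beta)\|_{v_0}$ and $\mathrm{h}(\boldsymbol{\alpha},\beta)$, and a $v_0$-adic smallness bound on $R_{i,s}(\alpha_i/\beta)$ exploiting both the order of vanishing and the $v_0$-closeness of $\alpha_i/\beta$ to the origin. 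Packaged through the product formula, these assemble precisely into the quantity $V_{v_0}(\boldsymbol{\alpha},\beta)$.

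The decisive step, and the main obstacle I expect, is to extract $rm+1$ linearly independent Pad\'e linear forms in the target numbers from the above construction. Evaluating the remainders at $z=\alpha_i/\beta$ across the indices $(i,s)$ and adjoining the trivial row gives an $(rm+1)\times(rm+1)$ matrix whose determinant is a generalized Wronskian of the family $\{f_{i,s}\}$ at the distinct points $\alpha_i/\beta$. Its non-vanishing must combine a vertical ingredient (the $r$ functions $f_{i,0},\ldots,f_{i,r-1}$ at a single point are linked by the hypergeometric differential operator, so their ordinary Wronskian is computable and non-zero under the hypothesis that neither $a_k$ nor $a_k+1-b_j$ is a positive integer, precluding parameter degeneracies) with a horizontal separation argument in the distinct $\alpha_i$. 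I would hope to argue either by a leading-term analysis of the Pad\'e construction as a polynomial in $\alpha_1,\ldots,\alpha_m$, or by a controlled degeneration of the determinant as the $\alpha_i$ coalesce, reducing the problem to the classical single-point Wronskian.

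Once the generalized Wronskian is shown to be non-zero, a standard Nesterenko-type linear independence criterion valid over an arbitrary number field $K$ at an arbitrary place $v_0$ applies to the sequence of Pad\'e matrices as $n\to\infty$. The strict inequality $V_{v_0}(\boldsymbol{\alpha},\beta)>0$ is precisely what is needed to force the remainders to decay faster than the reciprocal height of the approximants, ruling out any non-trivial $K$-linear relation among the $rm+1$ values together with $1$, and completing the proof.
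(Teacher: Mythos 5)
Your overall strategy matches the paper's: construct Pad\'e type II approximants formally, bound their heights and the remainders, show the crucial determinant is non-zero, and conclude with a Nesterenko-type criterion (the paper invokes \cite[Proposition 5.6]{DHK3}). But two central ingredients are missing or under-specified, and one of them is the heart of the paper.

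First, your construction produces only a \emph{single} Pad\'e system: one $P_0$ together with the polynomials $P_{i,s}$ and remainders $R_{i,s}$. That gives one $(rm+1)$-vector of linear forms, i.e.\ one column of the matrix you want to be invertible. You then claim that ``evaluating the remainders across the indices $(i,s)$ and adjoining the trivial row gives an $(rm+1)\times(rm+1)$ matrix,'' but you never say where the other $rm$ columns come from. The paper introduces a degree-shift parameter $\ell\in\{0,\ldots,rm\}$ (the $t^\ell$ in the kernel of the construction, see \eqref{Pl}), giving $rm+1$ systems $(P_\ell,P_{\ell,i,s})_\ell$ of weights $(n,\ldots,n)$ and degrees $rmn+\ell$. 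Without such a family you simply do not have a square matrix whose determinant can be studied. (Also a minor slip: the remainders are Laurent series in $1/z$ and the order is at $z=\infty$, not $z=0$.)

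Second, the non-vanishing argument you sketch would not go through as stated. The quantity to control is not the ordinary Wronskian of the $r$ contiguous hypergeometric functions at a single point; it is the determinant of the Pad\'e polynomials $\Delta(z)=\det(\vec p_\ell(z))_{\ell}$, which the paper reduces (after showing $\Delta\in K$) to a scalar $\Theta=\det(\vec q_\ell)$ depending polynomially on $\alpha_1,\ldots,\alpha_m$. Your ``controlled degeneration as the $\alpha_i$ coalesce'' meets exactly the difficulty the paper has to solve: $\Theta$ vanishes to order $(2n+1)r^2$ as $\alpha_i\to\alpha_j$, so the coalesced limit is $0$ and tells you nothing; the real work is to establish that $(\alpha_j-\alpha_i)^{(2n+1)r^2}$ divides $\Theta$ with the \emph{exact} cofactor being non-zero. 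The paper does this by proving homogeneity, divisibility by $\prod_i\alpha_i^{r(u+1)+r^2n+\binom{r}{2}}$, divisibility by $\prod_{i<j}(\alpha_j-\alpha_i)^{(2n+1)r^2}$ via a delicate combinatorial analysis of vanishing derivatives (Lemmas \ref{symetries}--\ref{combi}), and then an induction that sends $\alpha_m\to 0$ (not $\alpha_i\to\alpha_j$) to reduce $c_{u,m}$ to $c_{u+r(n+1),m-1}$ and finally to a one-variable determinant treated in \cite[Prop.~4.12]{DHK4}. The assumption that $a_k$ and $a_k+1-b_j$ are not positive integers is used precisely to guarantee the non-vanishing of the constants $a_{0,s}$ arising in this reduction, not to compute a hypergeometric Wronskian. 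So while your high-level ``vertical/horizontal'' intuition is reasonable, neither of the two concrete mechanisms you float (leading-term analysis, or coalescing $\alpha_i$) matches the mechanism that actually closes the argument, and the gap here is exactly where the paper's claimed novelty lies.
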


\

We mention that a linear independence criterion for values of generalized hypergeometric $G$-functions \emph{with cyclic coefficients} also follows from Theorem~$\ref{hypergeometric} $ based on
the same argument in \cite{DHK4}. We will join it in the context of our futur paper to avoid
heavy calculations in the current article.

This article is organized as follows.
In Section~$\ref{ghf}$, we describe our setup for generalized hypergeometric $G$-functions.
In Section~$\ref{pa}$, we proceed our construction of  Pad\'e approximants,  generalizing the method used in \cite{DHK2,DHK3,DHK4}.
Section~$\ref{nonvan}$ is devoted to show the non-vanishing property of the crucial determinant.
In Section~$\ref{proof}$, we give the proof of Theorem~$\ref{hypergeometric}$.
A more general statement, together with totally effective linear independence measures, is also given in this section by Theorem~$\ref{thm 2}$.

\section{Pad\'{e} approximation of generalized hypergeometric functions}\label{pa}
Throughout this section, denote by $K$ a field of characteristic $0$. For a variable $z$, we denote $z\tfrac{d}{dz}$ by $\theta_z$.
\subsection{Preliminaries}\label{ghf}
In this subsection, we introduce the generalized hypergeometric function. 
First let us introduce polynomials $A(X),B(X)\in K[X]$ satisfying $\max({\rm{deg}}\,A,{\rm{deg}}\,B)>0$. 
Assume 
\begin{align} \label{AB}
A(k)B(k)\neq 0 \ \ \ (k\ge0)\enspace.
\end{align} 
Notice that this assumption yields $A(\theta_t+k),B(\theta_t+k)\in {\rm{Aut}}_K(K[t])$ for any non-negative integer $k$.
Next, consider a sequence $\bold{c}:=(c_{k})_{k \ge0}$ satisfying $c_k\in K\setminus\{0\}$ and 
\begin{align}\label{recurrence 1} 
c_{k+1}=c_k\cdot\dfrac{A(k)}{B(k+1)} \ \ \ (k\ge0)\enspace.
\end{align}
We introduce the formal power series $$F(z):=\sum_{k=0}^{\infty}c_kz^{k+1}\enspace, $$
sometime also called generalized hypergeometric function.

By the recurrence relation $(\ref{recurrence 1})$, the series $F(1/z)$ is a solution of the differential equation:
\begin{align*}
\Big(B(-\theta_z)z-A(-\theta_z)\Big)f(z)=B(0)\enspace.
\end{align*} 
In order to construct Pad\'{e} approximants of the function $F(z)$, we introduce a power series, say,  contiguous to $F(z)$.

Put $r=\max({\rm{deg}}\,A,{\rm{deg}}\,B)$ and take $\gamma_1,\ldots,\gamma_{r-1}\in K$. Let $s$ be an integer with $0\le s \le r-1$. 
We define the power series $F_s(z)$ by 
\begin{align} \label{Fs1}
F_0(z)=F(z), \ \ \ F_{s}(z)=\sum_{k=0}^{\infty}(k+\gamma_1)\cdots (k+\gamma_s)c_kz^{k+1} \ \text{for} \ 1\le s \le r-1\enspace.
\end{align}
Notice that $F_s(1/z)$ satisfies
$$F_{s}(1/z)=(-\theta_z+\gamma_1-1)\cdots (-\theta_z+\gamma_s-1) (F_{0}(1/z))\enspace.$$
\begin{remark} \label{Main ex}
Let $p, q\in \N$, $a_1,\ldots,a_p, b_1,\ldots,b_q\in K\setminus\{0\}$ be non-negative integer.
Put $A(X)=(X+a_1+1)\cdots(X+a_p+1)$, $B(X)=(X+b_1)\cdots(X+b_q)(X+1)$ and define $$c_k=\dfrac{(a_1)_{k+1}\cdots(a_p)_{k+1}}{(b_1)_{k+1}\cdots(b_q)_{k+1}(k+1)!} \ \ (k\ge 0)\enspace.$$
Then $(c_k)_{k\ge0}$ satisfies $$c_{k+1}=c_k\cdot\dfrac{A(k)}{B(k+1)}\enspace.$$
For this sequence, we have $$F(1/z)={}_{p}F_{q} \biggl(\begin{matrix} a_1,\ldots, a_p\\ b_1, \ldots, b_{q} \end{matrix} \biggm| \dfrac{1}{z}\biggr)-1
\enspace.$$  
We assume $r:=p-1=q$. Put $\gamma_1=1,\gamma_2=b_{r-1},\ldots,\gamma_{r-1}=b_2$. Then the series $F_s(1/z)$ has the expression~:
\begin{align}
&F_0(1/z)={}_{r}F_{r-1} \biggl(\begin{matrix} a_1,\ldots, a_r\\ b_1, \ldots, b_{r-1} \end{matrix} \biggm| \dfrac{1}{z}\biggr)-1\enspace, \label{F0}\\
&F_s(1/z)
=\dfrac{a_1\cdots a_r}{b_1\cdots b_{r-s}}\cdot \dfrac{1}{z}\cdot {}_{r}F_{r-1} \biggl(\begin{matrix} a_1+1,\ldots,\ldots,\ldots,a_r+1\\ b_1+1, \ldots, b_{r-s}+1,b_{r-s+1},\ldots,b_{r-1} \end{matrix} \biggm| \dfrac{1}{z}\biggr)\enspace,
\label{Fs}
\end{align}
for $1\le s \le r-1$.
\end{remark}

\subsection{Construction of Pad\'e approximants}\label{pa}
Let $K$ be a field of characteristic $0$.  
We define the order function ${\rm{ord}}_{\infty}$ at ``$z=\infty$" by
\begin{align*} 
{\rm{ord}}_{\infty}:K((1/z))\rightarrow \Z\cup \{\infty\}; \ \ \sum_{k} \dfrac{{{c_k}}}{z^k}\mapsto \min\{k\in \Z\mid {{c_k}}\neq 0\}\enspace.
\end{align*}    
We first recall the following fact ({\it see} \cite{Feldman})~:
\begin{lemma} \label{pade}
Let $r$ be a positive integer, $f_1(z),\ldots,f_r(z)\in (1/z)\cdot K[[1/z]]$ and $\boldsymbol{n}:=(n_1,\ldots,n_r)\in \N^{r}$.
Put $N:=\sum_{i=1}^rn_i$.
Let $M$ be a positive integer  with $M\ge N$. Then there exists a family of polynomials 
$(P_0(z),P_{1}(z),\ldots,P_r(z))\in K[z]^{r+1}\setminus\{\bold{0}\}$ satisfying the following conditions~$:$
\begin{align*} 
&(i) \ {\rm{deg}}\,P_{0}(z)\le M\enspace,\\
&(ii) \ {\rm{ord}}_{\infty} (P_{0}(z)f_j(z)-P_j(z))\ge n_j+1 \ \text{for} \ 1\le j \le r\enspace.
\end{align*}
\end{lemma}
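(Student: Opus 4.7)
The plan is to reduce the existence of the tuple $(P_0, \ldots, P_r)$ to the solvability of a homogeneous linear system in the coefficients of $P_0$ alone, and then to invoke a dimension count.

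First I would parametrize $P_0(z) = \sum_{i=0}^{M} p_i z^i$ with the $M+1$ coefficients $p_0, \ldots, p_M \in K$ treated as unknowns. For any such $P_0$ and each $j \in \{1, \ldots, r\}$, the product $P_0(z) f_j(z)$ lies in $K[z] \oplus (1/z) K[[1/z]]$; write it as $Q_j(z) + R_j(z)$ with $Q_j \in K[z]$ the polynomial part and $R_j \in (1/z) K[[1/z]]$. Because $f_j$ has no nonnegative power of $z$ in its Laurent expansion, $Q_j$ automatically has degree at most $M - 1$. Setting $P_j := Q_j$ then forces $P_0 f_j - P_j = R_j$, so the difference already has order at infinity at least $1$, and condition (i) is built in via $\deg P_0 \le M$.

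Next I would translate condition (ii) into linear conditions on $(p_0, \ldots, p_M)$. For each $\ell \ge 1$, the coefficient of $z^{-\ell}$ in $P_0 f_j$ is a fixed $K$-linear form in $p_0, \ldots, p_M$ whose coefficients are read off from the Laurent expansion of $f_j$. The requirement $\mathrm{ord}_{\infty}(P_0 f_j - P_j) \ge n_j + 1$ amounts exactly to the vanishing of the coefficients of $z^{-1}, z^{-2}, \ldots, z^{-n_j}$ in $P_0 f_j$, i.e.\ exactly $n_j$ homogeneous linear relations. Summing over $j = 1, \ldots, r$, I obtain a homogeneous linear system of $N = n_1 + \cdots + n_r$ equations in the $M + 1$ unknowns $p_i$.

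Since $M + 1 \ge N + 1 > N$ by the hypothesis $M \ge N$, the rank-nullity theorem produces a nonzero solution, hence a nonzero polynomial $P_0 \in K[z]$ of degree at most $M$; defining $P_j$ as the polynomial part $Q_j$ of $P_0 f_j$ then yields a tuple $(P_0, P_1, \ldots, P_r) \in K[z]^{r+1}$ that is nonzero (already $P_0 \ne 0$) and satisfies both (i) and (ii). There is essentially no serious obstacle here — this is the standard existence argument for type II Padé approximants — the only mild point to flag is the automatic bound $\deg Q_j \le M - 1$, which avoids needing any separate control on the $P_j$ and guarantees that the tuple is nonzero via $P_0$ alone, without any nondegeneracy hypothesis on the remaining components.
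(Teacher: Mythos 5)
Your proof is correct and is the standard linear-algebra argument for the existence of type II Pad\'e approximants; the paper itself gives no proof of this lemma and simply cites \cite{Feldman} for the fact. The key points you make — setting $P_j$ to be the polynomial part of $P_0f_j$ (so $\deg P_j\le M-1$ automatically and nonvanishing is carried by $P_0$ alone), translating condition $(ii)$ into $N$ homogeneous linear equations in the $M+1$ unknown coefficients of $P_0$, and concluding from $M+1>N$ — are all exactly right.
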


\begin{definition}
For the family of polynomials $(P_0(z),P_{1}(z),\ldots,P_r(z)) \in K[z]^{r+1}$ satisfying the properties $(i)$ and $(ii)$ of Lemma $\ref{pade}$, let us call
it, weight $\boldsymbol{n}$ and degree $M$ Pad\'{e} type approximants of $(f_1,\ldots,f_r)$.
For such $(P_0(z),P_{1}(z),\ldots,P_r(z))$, of $(f_1,\ldots,f_r)$, 
consider the family of formal Laurent series $(P_{0}(z)f_j(z)-P_{j}(z))_{1\le j \le r}$. We call it weight $\boldsymbol{n}$ degree $M$ Pad\'{e} type approximations of $(f_1,\ldots,f_r)$.
\end{definition}

\begin{notation} \label{notationderiprim}
\begin{itemize}
\item[$(i)$]  For $\alpha\in K$, denote by ${\ev}_{\alpha}$ the linear evaluation map $K[t]\longrightarrow K$, $P\longmapsto P(\alpha)$. Whenever there is an ambiguity in a setting of variables, we will denote the map by $\ev_{t\rightarrow \alpha}$.
\item[$(ii)$] For $P\in K[t]$, we denote by $[P]$ the multiplication by $P$ (the map $Q\longmapsto PQ$).
\item[$(iii)$] For  a $K$-automorphism $\varphi$ of a $K$-module $M$ and an integer  $k$, put
$$\varphi^{k}=\begin{cases}
\overbrace{\varphi\circ\cdots\circ\varphi}^{k-\text{times}} & \ \text{if} \ k>0\\
{\rm{id}}_M &  \ \text{if} \ k=0\\
\overbrace{\varphi^{-1}\circ\cdots\circ\varphi^{-1}}^{-k-\text{times}} & \ \text{if} \ k<0\enspace.
\end{cases}
$$
\end{itemize}
\end{notation}

\

Now we explicitly construct Pad\'{e} approximants of generalized hypergeometric functions at distinct points.
The following lemma is a key ingredient. 
\begin{lemma} \label{difference}
Let ${{k}}$ be a non-negative integer. 

$(i)$ Let $H(X)\in K[X]$. We have $[t^k]\circ H(\theta_t)=H(\theta_t-k)\circ [t^k].$

$(ii)$ Let $A,B\in K[X]$ be polynomials with  \eqref{AB}. Let $\bold{c}:=(c_k)_{k\ge0}$ be a sequence satisfying $c_k\in K\setminus\{0\}$ together with $(\ref{recurrence 1})$ for $A,B$.
Define $\mathcal{T}_{\bold{c}}\in {\rm{Aut}}_K(K[t])$ by 
\begin{align} \label{Phi}
\mathcal{T}_{\bold{c}}:K[t]\longrightarrow K[t]; \ t^k\mapsto \dfrac{t^k}{c_k}\enspace.
\end{align}
Then we have the relation, in the ring ${\rm{End}}_K(K[t])$,
$$[t^k]\circ \mathcal{T}_{\bold{c}}=\mathcal{T}_{\bold{c}}\circ A(\theta_t-1)\circ \cdots \circ A(\theta_t-k)\circ B(\theta_t)^{-1}\circ \cdots \circ B(\theta_t-k+1)^{-1}\circ [t^k] \enspace.$$
\end{lemma}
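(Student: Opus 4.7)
The plan is to reduce both parts to a verification on the monomial basis $(t^m)_{m \ge 0}$ of $K[t]$, since each side of (i) and (ii) is a $K$-linear endomorphism of $K[t]$. The key observation is that $\theta_t$ acts as the scalar $j$ on $t^j$, so any polynomial $H(\theta_t)$ acts diagonally with eigenvalue $H(j)$, and the shifted operator $H(\theta_t - k)$ acts diagonally with eigenvalue $H(j-k)$. Before doing the computation one should check that the composition on the right-hand side of (ii) is well defined in ${\rm{End}}_K(K[t])$: by the previous remark, $B(\theta_t - j)^{-1}$ makes sense on the subspace spanned by the $t^n$ with $B(n-j)\neq 0$, and since $[t^k]$ sits at the rightmost position, each inverse $B(\theta_t - j)^{-1}$ for $0 \le j \le k-1$ is only applied to monomials $t^n$ with $n \ge k$, hence at arguments $n - j \ge 1$ where $B$ is nonzero by $(\ref{AB})$.

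For (i), the computation is immediate: applied to $t^j$, the left-hand side gives $[t^k](H(j)\,t^j) = H(j)\, t^{j+k}$, while the right-hand side gives $H(\theta_t - k)(t^{j+k}) = H(j)\, t^{j+k}$. For (ii), I would evaluate both sides on $t^m$. The left-hand side yields $[t^k](\mathcal{T}_{\mathbf{c}}(t^m)) = t^{m+k}/c_m$. For the right-hand side, $[t^k]$ first sends $t^m$ to $t^{m+k}$; then each $B(\theta_t - j)^{-1}$ and each $A(\theta_t - j)$ acts on $t^{m+k}$ as a scalar, yielding, after bookkeeping of the shifted indices, the expression
$$
\frac{A(m)\,A(m+1)\cdots A(m+k-1)}{B(m+1)\,B(m+2)\cdots B(m+k)}\, t^{m+k}.
$$
Finally $\mathcal{T}_{\mathbf{c}}$ divides this by $c_{m+k}$, so equality of the two sides reduces to
$$
c_{m+k} = c_m \cdot \frac{A(m)\,A(m+1)\cdots A(m+k-1)}{B(m+1)\,B(m+2)\cdots B(m+k)},
$$
which is exactly the iterated form of the recurrence $(\ref{recurrence 1})$.

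No serious obstacle is expected; the whole argument is a direct verification on monomials followed by telescoping of $(\ref{recurrence 1})$. The only substantive point is the careful index bookkeeping in the composition of $2k$ operators, together with the well-definedness remark made above. An alternative would be to argue by induction on $k$, using part (i) as the commutation rule that pushes one factor of $[t]$ past $A$ and $B$ at each step; however the direct evaluation on monomials appears to be the cleanest route.
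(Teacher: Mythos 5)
Your proof is correct and follows essentially the same route as the paper: check both sides on the monomial basis, where $\theta_t$ (and hence any $H(\theta_t-j)$) acts diagonally, and telescope the recurrence $(\ref{recurrence 1})$ to identify the two scalars. Your additional observation that the inverses $B(\theta_t-j)^{-1}$ for $1\le j\le k-1$ are only applied after $[t^k]$, hence only to monomials $t^n$ with $n\ge k$ where $B(n-j)\neq 0$, is a welcome clarification that the paper leaves implicit.
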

\begin{proof}
$(i)$ Let $n$ be a non-negative integer. 
We may assume $H(X)=X^n$. For any non-negative integer $m$, we have 
\begin{align} \label{LHS1}
[t^k]\circ \theta^{n}_t(t^m)=m^nt^{m+k}\enspace.
\end{align} 
On the other hand, we have 
\begin{align*} 
(\theta_t-k)^n\circ [t^k](t^m)=(k+m-k)^nt^{m+k}=m^nt^{m+k} \enspace.
\end{align*}
By $(\ref{LHS1})$ and the above identity, we obtain the assertion.

$(ii)$
Let $m$ be a non-negative integer. The recurrence relation $(\ref{recurrence 1})$ yields 
$$\dfrac{1}{c_{m+k}}=\dfrac{B(m+k)\cdots B(m+1)}{A(m+k-1)\cdots A(m)}\cdot\dfrac{1}{c_m} \enspace,$$
hence we obtain
\begin{align*}
[t^k]\circ \mathcal{T}_{\bold{c}}(t^m)&=\dfrac{t^{k+m}}{c_m}=\dfrac{1}{c_{m+k}}\dfrac{A(m+k-1)\cdots A(m)}{B(m+k)\cdots B(m+1)}t^{k+m}\\
&=\mathcal{T}_{\bold{c}}\circ A(\theta_t-1)\circ \cdots \circ A(\theta_t-k)\circ B(\theta_t)^{-1}\circ \cdots \circ B(\theta_t-k+1)^{-1}\circ [t^k](t^m) \enspace.
\end{align*}
which achieves the proof of $(ii)$.
\end{proof}
We are now ready for our construction of Pad\'{e} approximants, of the hypergeometric functions at distinct points.
Let $\bold{c}:=(c_k)_{k\ge0}$ be a sequence satisfying $c_k\in K\setminus\{0\}$ together with $(\ref{recurrence 1})$ for polynomials $A,B\in K[X]$.
Put $r=\max({\rm{deg}}\,A,{\rm{deg}}\,B)$. Let us fix $\gamma_1,\ldots,\gamma_{r-1}\in K$. 
We denote by $F_s(z)$ the power series defined in \eqref{Fs} for $\gamma_1,\ldots,\gamma_{r-1}\in K$.
Let $m$ be a strictly positive integer and $\alpha_1,\ldots,\alpha_m\in K\setminus\{0\}$ which are pairwise distinct.
For $0\leq s \leq r-1$, we shall introduce a $K$-homomorphism $\psi_{s,i}\in {\rm{Hom}}_K(K[t],K)$ by
$$\psi_{i,s}:K[t]\longrightarrow K; \ t^k\mapsto (k+\gamma_1)\cdots (k+\gamma_s)c_k\alpha^{k+1}_i \enspace,$$
where $(k+\gamma_1)\cdots (k+\gamma_s)=1$ for $s=0$ and $k\ge0$.
\begin{proposition} \label{GHG pade} $(${\it confer} {\rm{\cite[Theorem $5.5$]{ch brothers}}} $)$
We use the notations as above.
For a non-negative integer $\ell$, we define polynomials$:$
\begin{align}
&P_{\ell}(z)=\left[\dfrac{1}{(n-1)!^r}\right]\circ {\rm{Eval}}_z\circ \mathcal{T}_{\bold{c}} \bigcirc_{j=1}^{n-1} B(\theta_t+j)\left(t^{\ell}\prod_{i=1}^m(t-\alpha_i)^{rn}\right)\enspace, \label{Pl}\\
&P_{\ell,i,s}(z)=\psi_{i,s}\left(\dfrac{P_{\ell}(z)-P_{\ell}(t)}{z-t}\right) \ \text{for} \ 1\le i \le m, 0\le s \le r-1\enspace, \label{Plis}
\end{align}
where $\mathcal{T}_{\boldsymbol{c}}\in {\rm{Aut}}_K(K[t])$ defined in $(\ref{Phi})$.
Then $(P_{\ell}(z),P_{\ell,i,s}(z))_{1\le i \le m, 0\le s \le r-1}$ forms a weight $(n,\ldots,n)\in \N^{rm}$ and degree $rmn+\ell$ Pad\'{e} type approximants of
$(F_{s}(\alpha_i/z))_{1\le i \le m, 0\le s \le r-1}$.
\end{proposition}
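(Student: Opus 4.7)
The plan splits into an easy degree bound and a more substantial order-of-vanishing estimate. For the degree, one observes that $\mathcal{T}_{\bold{c}}$ and each $B(\theta_t+j)$ act diagonally on the monomial basis of $K[t]$ (by scalars $1/c_k$ and $B(k+j)$ respectively), hence preserve degrees; since $t^\ell\prod_{i=1}^m(t-\alpha_i)^{rn}$ has degree $rmn+\ell$, the same bound holds for $P_\ell(z)$. The substance lies in proving
\[
{\rm{ord}}_{\infty}\bigl(P_\ell(z) F_s(\alpha_i/z) - P_{\ell,i,s}(z)\bigr) \ge n+1
\]
for every $1\le i\le m$ and $0\le s\le r-1$.

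I would start from the formal identity $\tfrac{1}{z-t}=\sum_{k\ge 0} t^k\, z^{-k-1}$ in $K[t]((1/z))$. By the very definitions of $\psi_{i,s}$ and $F_s$, this identity reads $F_s(\alpha_i/z)=\psi_{i,s}\bigl(\tfrac{1}{z-t}\bigr)$, where $\psi_{i,s}$ is extended to $K[t]((1/z))$ by $K((1/z))$-linearity. Combining this with the elementary decomposition $\tfrac{P_\ell(t)}{z-t}=\tfrac{P_\ell(z)}{z-t}-\tfrac{P_\ell(z)-P_\ell(t)}{z-t}$ and the definition of $P_{\ell,i,s}(z)$ yields
\[
P_\ell(z)F_s(\alpha_i/z)-P_{\ell,i,s}(z)=\psi_{i,s}\!\left(\frac{P_\ell(t)}{z-t}\right)=\sum_{k\ge 0}\psi_{i,s}\bigl(t^k P_\ell(t)\bigr)\, z^{-k-1},
\]
so the order bound reduces to $\psi_{i,s}(t^k P_\ell(t))=0$ for $0\le k\le n-1$, all $i$ and all $s$.

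The technical heart is to evaluate this quantity via Lemma~\ref{difference}. Setting $\Psi_s(X):=\prod_{j=1}^s(X+\gamma_j)$, a direct check gives $\psi_{i,s}(f)=\alpha_i\cdot\ev_{t\to\alpha_i}\bigl(\Psi_s(\theta_t)\,\mathcal{T}_{\bold{c}}^{-1}(f)\bigr)$. Conjugating $[t^k]$ through $\mathcal{T}_{\bold{c}}$ by Lemma~\ref{difference}$(ii)$ produces the factors $A(\theta_t-j)$ and $B(\theta_t-j+1)^{-1}$ for $j=1,\ldots,k$, while pushing $[t^k]$ past the $B(\theta_t+j)$ that appear in the definition of $P_\ell$ by Lemma~\ref{difference}$(i)$ replaces each by $B(\theta_t+j-k)$ for $j=1,\ldots,n-1$. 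For $0\le k\le n-1$ the first $k$ of these latter factors (those with $j=1,\ldots,k$) cancel exactly the $B$-inverses, leaving
\[
\psi_{i,s}(t^k P_\ell(t))=\frac{\alpha_i}{(n-1)!^r}\,\ev_{t\to\alpha_i}\!\left(\Psi_s(\theta_t)\prod_{j=1}^{k} A(\theta_t-j)\prod_{j=1}^{n-1-k} B(\theta_t+j)\Bigl(t^{k+\ell}\prod_{i'=1}^m(t-\alpha_{i'})^{rn}\Bigr)\right).
\]

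To conclude, the operator in front is a polynomial in $\theta_t$ of total degree $s+rk+r(n-1-k)=s+r(n-1)\le rn-1$, and hence a differential operator in $d/dt$ of order at most $rn-1$ on $K[t]$. Since its argument vanishes to order $rn$ at $t=\alpha_i$, evaluating at $t=\alpha_i$ gives $0$, which yields the claim. The delicate part, and the one I expect to be the main obstacle in writing up cleanly, is the commutation/cancellation with Lemma~\ref{difference}: one has to verify that exactly $k$ of the shifted $B$-factors annihilate the $B$-inverses (explaining why the range $k\le n-1$, and hence the sharp order bound $n+1$, appears), and that the standing hypothesis $s\le r-1$ is precisely what keeps the differential order strictly below $rn$.
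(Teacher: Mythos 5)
Your proposal is correct and follows exactly the same route as the paper: the degree count by diagonality of $\mathcal{T}_{\bold{c}}$ and $B(\theta_t+j)$, the reduction of the order estimate to $\psi_{i,s}(t^k P_\ell(t))=0$ for $0\le k\le n-1$ via the geometric-series expansion of $1/(z-t)$, the double application of Lemma~\ref{difference} with the cancellation of the $B$-inverses against the first $k$ shifted $B$-factors, and the final Leibniz-rule/vanishing-order argument using $s+r(n-1)\le rn-1$. The only cosmetic deviation is that you write $\psi_{i,s}=[\alpha_i]\circ\ev_{\alpha_i}\circ\Psi_s(\theta_t)\circ\mathcal{T}_{\bold{c}}^{-1}$ whereas the paper records it as $[\alpha_i]\circ\ev_{\alpha_i}\circ\mathcal{T}_{\bold{c}}^{-1}\circ\Psi_s(\theta_t)$; these agree because $\mathcal{T}_{\bold{c}}$ and $\Psi_s(\theta_t)$ are both diagonal in the monomial basis and so commute.
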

\begin{proof}
By the definition of $P_{\ell}(z)$, we have
$$
{\rm{deg}}\,P_{\ell}(z)=rmn+\ell\enspace.
$$
Hence the required condition on the degree is verified. 
By the definition of $\mathcal{T}_{\bold{c}}$ and $\psi_{i,s}$, we have 
\begin{align}
&\psi_{{i,s}}=\psi_{i,0}\circ (\theta_t+\gamma_1)\circ \cdots \circ (\theta_t+\gamma_s)\enspace, \label{phi 0}\\
&\psi_{i,0}\circ \mathcal{T}_{\bold{c}}=[\alpha_i]\circ {\rm{Eval}}_{\alpha_i} \ \text{for} \ 1\le i \le m\enspace. \label{eval i}
\end{align}
Put $R_{\ell,i,s}(z)=P_{\ell}(z)F_{s}(\alpha_i/z)-P_{\ell,i,s}(z)$.
Then, by the definition of $R_{\ell,i,s}(z)$, we obtain
\begin{align*}
R_{\ell,i,s}(z)&=\displaystyle P_{\ell}(z)\psi_{i,s}\left(\dfrac{1}{z-t}\right)-P_{\ell,i,s}(z)
                                                       =\displaystyle\psi_{i,s}\left(\dfrac{P_{\ell}(t)}{z-t}\right)
                                                       =\sum_{k=0}^{\infty}\dfrac{\psi_{i,s}(t^kP_{\ell}(t))}{z^{k+1}}\enspace.
\end{align*}
Let $k$ be an integer with $0\le k \le n-1$. 
By Lemma $\ref{difference}$ $(i)$  $(ii)$, we have 
\begin{align*}
(n-1)!^rt^kP_{\ell}(t)&=[t^k]\circ {\mathcal{T}}_{\bold{c}} \bigcirc_{j=1}^{n-1}B(\theta_t+j)\left(t^{\ell}\prod_{i=1}^m(t-\alpha_i)^{rn}\right) \\
&={\mathcal{T}}_{\bold{c}} \bigcirc_{j^{\prime}=1}^kA(\theta_t-j^{\prime})  \bigcirc_{j^{\prime \prime}=0}^{k-1}B(\theta_t-j^{\prime \prime})^{-1} \bigcirc_{j=1}^{n-1}B(\theta_t+j-k)\left(t^{\ell+k}\prod_{i=1}^m(t-\alpha_i)^{rn}\right)\\
&={\mathcal{T}}_{\bold{c}}\bigcirc_{j^{\prime}=1}^kA(\theta_t-j^{\prime})\bigcirc_{j=1}^{n-1-k}B(\theta_t+j)\left(t^{\ell+k}\prod_{i=1}^m(t-\alpha_i)^{rn}\right)\enspace,
\end{align*}
where $\bigcirc_{j^{\prime}=1}^kA(\theta_t-j^{\prime})={\rm{id}}_{K[t]}$ if $k=0$. 
Therefore we have 
{\small{
\begin{align}
\psi_{i,s}((n-1)!^rt^kP_{\ell}(t))
&=\psi_{i,s}\circ {\mathcal{T}}_{\bold{c}}\bigcirc_{j^{\prime}=1}^kA(\theta_t-j^{\prime})\bigcirc_{j=1}^{n-1-k}B(\theta_t+j)\left(t^{\ell+k}\prod_{i=1}^m(t-\alpha_i)^{rn}\right) \nonumber\\
&=\psi_{i,0}\circ {\mathcal{T}}_{\bold{c}}\bigcirc_{j^{\prime \prime}=1}^s (\theta_t+\gamma_{j^{\prime \prime}})\bigcirc_{j^{\prime}=1}^kA(\theta_t-j^{\prime})\bigcirc_{j=1}^{n-1-k}B(\theta_t+j)\left(t^{\ell+k}\prod_{i=1}^m(t-\alpha_i)^{rn}\right) \label{1}\\
&=[\alpha_i]\circ {\rm{Eval}}_{\alpha_i}\bigcirc_{j^{\prime \prime}=1}^s (\theta_t+\gamma_{j^{\prime \prime}})\bigcirc_{j^{\prime}=1}^kA(\theta_t-j^{\prime})\bigcirc_{j=1}^{n-1-k}B(\theta_t+j)\left(t^{\ell+k}\prod_{i=1}^m(t-\alpha_i)^{rn}\right)\enspace. \label{2}
\end{align} 
}}
Note that, in $(\ref{1})$, $(\ref{2})$, we use $(\ref{phi 0})$ and $(\ref{eval i})$ respectively.
Since we have 
$${\rm{deg}} \left(\prod_{j^{\prime \prime}=1}^s (X+\gamma_{j^{\prime \prime}})\prod_{j^{\prime}=1}^kA(X-j^{\prime})\prod_{j=1}^{n-1-k}B(X+j)\right)
\le s+rk+r(n-1-k)\le rn-1\enspace$$
thanks to the Leibniz rule, the polynomial $\bigcirc_{j^{\prime \prime}=1}^s (\theta_t+\gamma_{j^{\prime \prime}})\bigcirc_{j^{\prime}=1}^kA(\theta_t-j^{\prime})\bigcirc_{j=1}^{n-1-k}B(\theta_t+j)\left(t^{\ell+k}\prod_{i=1}^m(t-\alpha_i)^{rn}\right)$ is contained in the ideal $(t-\alpha_i)={\rm{ker}}\, {\rm{Eval}}_{\alpha_i}$. 
Consequently we have
$$\psi_{i,s}(t^kP_{\ell}(t))=0 \ \text{for} \ 0\le k \le n-1, 1\le i \le m, 0\le s \le r-1\enspace.$$
By the above expansion of $R_{\ell,i,s}(z)$, we obtain
$$
{\rm{ord}}_{\infty}R_{\ell,i,s}(z)\ge n+1 \ \text{for} \  1\le i \le m, 0\le s \le r-1\enspace,
$$
hence Proposition $\ref{GHG pade}$ follows.
\end{proof}

{{We should mention that this construction was also considered by D.~V.~Chudnovsky and G.~V.~Chudnovsky in \cite[Theorem $5.5$]{ch brothers},
but without arithmetic application. See also a related work in \cite{Matala}.}}

\begin{remark}
The polynomial $P_{\ell}(z)$ does not depend on the choice of $\gamma_1,\ldots,\gamma_{r-1}\in K$.
By contrast, the polynomials $P_{\ell,i,s}(z)$ depend on these choice.
\end{remark}
\begin{remark}
Let $r,m$ be strictly positive integers. Let $x\in K$, {{supposed to be non-negative integer}} and $\alpha_1,\ldots,\alpha_m\in K\setminus\{0\}$ be pairwise distinct.
Put $A(X)=B(X)=(X+x+1)^r$ and $c_k=1/(k+x+1)^r$. Then we have $$c_{k+1}=c_k\cdot\dfrac{A(k)}{B(k+1)}\enspace.$$
Put $\gamma_1=\cdots=\gamma_{r-1}=x+1$. This gives us 
\begin{align} \label{Fis}
F_{s}(\alpha_i/z)=\sum_{k=0}^{\infty}\dfrac{1}{(k+x+1)^{r-s}}\cdot\dfrac{\alpha^{k+1}_i}{z^{k+1}}=\Phi_{r-s}(x,\alpha_i/z) \ \ (1\le i \le m, 0\le s \le r-1)\enspace,
\end{align}
where $\Phi_{s}(x,1/z)$ is the $s$-th Lerch function (generalized polylogarithmic function; {\it confer} \cite{DHK3}). 
In this case, we have ${\mathcal{T}}_{\bold{c}}=\dfrac{(\theta_t+x+1)^r}{(x+1)^r}$ and 
$$P_{\ell}(z)=\left[\dfrac{1}{(x+1)^r \cdot (n-1)!^r}\right]\circ {\rm{Eval}}_z \bigcirc_{j=1}^n(\theta_t+x+j)^r\left(t^{\ell}\prod_{i=1}^m(t-\alpha_i)^{rn}\right)\enspace.$$
The polynomial $\tfrac{(x+1)^r}{n^r}P_{\ell}(z)$ gives Pad\'{e} type approximants of this Lerch functions in \cite[Theorem $3.8$]{DHK2}.
\end{remark}

\section{Non-vanishing of the generalized Wronskian of Hermite type}\label{nonvan} 
Let $K$ be a field of characteristic $0$ and $A(X),B(X)\in K[X]$ satisfying $\eqref{AB}$.
From this section to last, we assume ${\rm{deg}}\,A={\rm{deg}}\,B>0$ and put ${\rm{deg}}\, A=r$. 
We shall choose a sequence $\boldsymbol{c}:=(c_k)_{k\ge0}$ satisfying $c_k\in K\setminus\{0\}$ and \eqref{recurrence 1} for the given polynomials $A(X),B(X)$.
Let $\boldsymbol{\alpha}:=(\alpha_1,\ldots,\alpha_m)\in (K\setminus\{0\})^m$ whose coordinates are pairwise distinct and $\gamma_1,\ldots,\gamma_{r-1}\in K$.
Let us fix a positive integer $n$. 
For a non-negative integer $\ell$ with $0\le \ell \le rm$, recall the polynomials $P_{\ell}(z), P_{\ell,i,s}(z)$ defined in $(\ref{Pl})$ and $(\ref{Plis})$.
We define column vectors $\vec{p}_{\ell}(z)\in K[z]^{rm+1}$ by
\begin{align*}
&\vec{p}_{\ell}(z)={}^t\Biggl(P_{\ell}(z),
{P_{\ell,1,r-1}(z),\ldots, P_{\ell,1,0}(z)}, \ldots, {P_{\ell,m,r-1}(z),\ldots, P_{\ell,m,0}(z)}\Biggr)\enspace,
\end{align*}
and put
$$
\Delta_n(z)=\Delta(z)=
{\rm{det}}  {\begin{pmatrix}
\vec{p}_{0}(z) \ \cdots \ \vec{p}_{rm}(z)
\end{pmatrix}}\enspace.
$$

\

The aim of this section is to prove the following proposition.
\begin{proposition} \label{non zero det}
The determinant $\Delta(z)$ satisfies $\Delta(z)\in K\setminus\{0\}. $
\end{proposition}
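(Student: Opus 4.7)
The plan is to establish $\Delta(z)\in K\setminus\{0\}$ in three phases: reduce to constancy, identify the constant, and prove non-vanishing of the reduced determinant.

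\emph{Phase 1: $\Delta(z)$ is a constant.} I would first perform the row operation replacing row $(i,s)$ by (row $(i,s)$)$\,-\,F_s(\alpha_i/z)\cdot$(row~$0$). This preserves the determinant and, using $P_{\ell,i,s}(z)=F_s(\alpha_i/z)P_\ell(z)-R_{\ell,i,s}(z)$ with $R_{\ell,i,s}(z):=P_\ell(z)F_s(\alpha_i/z)-P_{\ell,i,s}(z)$, turns row $(i,s)$ into the vector of remainders $-R_{\ell,i,s}(z)$. By Proposition~\ref{GHG pade} each $R_{\ell,i,s}(z)$ is $O(z^{-(n+1)})$ at infinity, while $P_\ell(z)$ has degree $rmn+\ell\le rm(n+1)$. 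Expanding by Leibniz, every term is $O(z^{rmn+rm-rm(n+1)})=O(1)$; a polynomial bounded at infinity is constant, so $\Delta(z)\in K$.

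\emph{Phase 2: Identifying the constant.} Letting $z\to\infty$, only permutations pairing column $rm$ with row $0$ contribute a non-vanishing limit. Using that the leading coefficient $p_{rm}$ of $P_{rm}(z)$ is non-zero (thanks to $B(k)\ne 0$ for $k\ge0$) and that the leading $z^{-(n+1)}$ coefficient of $R_{\ell,i,s}(z)$ equals $\psi_{i,s}(t^nP_\ell(t))$, one obtains, up to a non-zero scalar,
$$\Delta\;=\;\pm\, p_{rm}\,\det\bigl[\psi_{i,s}(t^n P_\ell(t))\bigr]_{\ell=0,\ldots,rm-1;\,(i,s)}.$$
Extending Lemma~\ref{difference} to $k=n$, the product of $B$-operators telescopes to a single factor $B(\theta_t)^{-1}$, yielding
$$\psi_{i,s}(t^nP_\ell(t))\;=\;\frac{\alpha_i}{(n-1)!^r}\Bigl[G_s\,H(\theta_t)\,B(\theta_t)^{-1}\bigl(t^{\ell+n}Q(t)\bigr)\Bigr]_{t=\alpha_i},$$
with $G_s=(\theta_t+\gamma_1)\cdots(\theta_t+\gamma_s)$, $H(\theta_t)=\bigcirc_{j=1}^n A(\theta_t-j)$, and $Q(t)=\prod_i(t-\alpha_i)^{rn}$.

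\emph{Phase 3: Non-vanishing of the reduced Wronskian.} Factoring $\alpha_i/(n-1)!^r$ from each row, performing triangular row reduction inside each $i$-block to replace the action of $G_s$ by pure derivatives, and extracting the $\alpha_i^s$ factors that arise, the task reduces to showing that the Hermite interpolation determinant
$$\det\bigl[f_\ell^{(s)}(\alpha_i)\bigr]_{\ell=0,\ldots,rm-1;\,1\le i\le m,\,0\le s\le r-1},\quad f_\ell(t):=H(\theta_t)B(\theta_t)^{-1}\bigl(t^{\ell+n}Q(t)\bigr),$$
does not vanish, equivalently that no non-trivial combination $\sum_\ell\lambda_\ell f_\ell$ is divisible by $\prod_i(t-\alpha_i)^r$. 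The main obstacle is that $H(\theta_t)B(\theta_t)^{-1}$ is diagonal in the monomial basis but does \emph{not} preserve the local factors $(t-\alpha_i)$, so the strong divisibility of the pre-image $t^nL(t)Q(t)$ (with $L(t)=\sum\lambda_\ell t^\ell$ of degree $\le rm-1$) by $\prod_i(t-\alpha_i)^{rn}$ does not transfer directly. My plan here is to argue by duality: a non-trivial kernel would produce a non-zero functional $\phi=\sum_{i,s}\mu_{i,s}D^s_{\alpha_i}$ annihilating every $f_\ell$; pulling $\phi$ back across the diagonal operator (whose eigenvalues $\prod_{j=1}^nA(k-j)/B(k)$ are non-zero for $k\ge n$ by $A(k)B(k)\ne 0$) reinterprets it as a vanishing condition against a modified functional on the family $t^{\ell+n}Q(t)$; matching the explicit zero-pattern of $Q(t)$ at the $\alpha_i$'s then forces a rational identity incompatible with the pairwise distinctness of the $\alpha_i$, yielding a contradiction. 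This duality-based Hermite--Wronskian argument for generalized hypergeometric families is where the paper's novel contribution lies.
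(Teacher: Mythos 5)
Your Phases 1 and 2 are sound and agree with the paper's first two steps: the row operations, the order/degree count forcing $\Delta(z)$ to be a constant (Lemma~\ref{sufficient condition}), and the identification of that constant as (a non-zero leading coefficient of $P_{rm}$) times a reduced determinant of the coefficients $\psi_{i,s}(t^nP_\ell(t))$ (equation~(\ref{bunkai 0})). Your extension of Lemma~\ref{difference} to $k=n$, telescoping the $B$-factors down to a single $B(\theta_t)^{-1}$, is also a correct computation and is essentially what Lemma~\ref{simplify} performs.

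Phase 3, however, contains a genuine gap. You correctly diagnose the central obstacle — the diagonal operator $H(\theta_t)B(\theta_t)^{-1}$ does not respect the local filtration by powers of $(t-\alpha_i)$, so divisibility of $t^nL(t)Q(t)$ by $\prod_i(t-\alpha_i)^{rn}$ does not propagate to $f_L$ — but the proposed resolution by ``duality'' is a restatement of the problem, not a proof. Pulling the functional $\phi=\sum_{i,s}\mu_{i,s}D^s_{\alpha_i}$ back through the diagonal operator does not produce a functional supported at the points $\alpha_i$: the pulled-back functional acts on $t^k$ as $\lambda_k\phi(t^k)$ with $\lambda_k=\prod_{j=1}^nA(k-j)/B(k)$, and the weights $\lambda_k$ vary with $k$, so the result is no longer of Hermite type and cannot be matched against the zero pattern of $Q(t)$ by inspection. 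In particular, nothing in your sketch locates where the theorem's non-trivial hypothesis (that $a_k$ and $a_k+1-b_j$ are never strictly positive integers, equivalently $\eta_i-\zeta_j\notin\N$) is used. The paper needs exactly this hypothesis to guarantee the non-vanishing of the scalar factor $\prod_{s}a_{0,s}^m$ coming from the re-expansion~(\ref{expansion}); a correct proof of non-vanishing must invoke it somewhere.

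The paper's resolution (Steps~3--5) goes through machinery that is absent from your plan: a multivariable auxiliary polynomial $\hat P_{n,u}$ in $rm$ variables $t_{i,s}$, the product operator $\Psi=\bigcirc_{i,s}\tilde\psi_{\alpha_i,i,s}$, the factorization of $C_{u,m}=\Psi(\hat P_u)$ into $c_{u,m}\prod_i\alpha_i^{\ast}\prod_{i<j}(\alpha_j-\alpha_i)^{(2n+1)r^2}$ (Proposition~\ref{decompose Cnum}). The divisibility by $(\alpha_j-\alpha_i)^{(2n+1)r^2}$ is proved by repeated differentiation in $\alpha$, antisymmetry of the Vandermonde factor $g$ (Lemma~\ref{symetries}), the commutation rule (Lemma~\ref{prelimi}~$(iii)$), and a combinatorial count (Lemmas~\ref{condition} and~\ref{combi}); then an induction on $m$ (Lemma~\ref{reduc}) reduces to a one-point determinant whose non-vanishing is imported from \cite[Proposition~4.12]{DHK4}. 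None of these steps is a routine consequence of your duality heuristic, and without them the argument does not close.
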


\subsection{First Step}
\begin{lemma} \label{sufficient condition}
We have $\Delta(z)\in K$.
\end{lemma}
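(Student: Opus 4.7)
The plan is to show that $\Delta(z)$, a priori an element of $K[z]$, actually has $z$-degree $\le 0$ and hence lies in $K$. The tool is to rewrite the determinant in terms of the remainder series, which have large order at infinity.

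First, I would work in the field $K((1/z))$ of formal Laurent series and use the Padé relation
$$P_{\ell,i,s}(z)=P_\ell(z)F_s(\alpha_i/z)-R_{\ell,i,s}(z),\qquad \mathrm{ord}_\infty R_{\ell,i,s}(z)\ge n+1,$$
which holds by Proposition~\ref{GHG pade}. In the matrix $(\vec{p}_0(z),\ldots,\vec{p}_{rm}(z))$, for each row labelled $(i,s)$, perform the elementary row operation
$$\text{(row)}_{(i,s)}\longleftarrow \text{(row)}_{(i,s)}-F_s(\alpha_i/z)\cdot\text{(row)}_0.$$
Such a row operation, performed in the field $K((1/z))$, does not change the determinant. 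The first row remains $(P_0(z),\ldots,P_{rm}(z))$, while the row $(i,s)$ becomes $(-R_{0,i,s}(z),\ldots,-R_{rm,i,s}(z))$. Thus
$$\Delta(z)=(-1)^{rm}\det M(z),$$
where $M(z)$ is the matrix whose first row consists of the $P_\ell(z)$ and whose remaining $rm$ rows consist of the remainders $R_{\ell,i,s}(z)$.

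Next, I would bound the order at infinity of $\det M(z)$ by expanding it as a sum over permutations $\sigma$ of $\{0,1,\ldots,rm\}$. Each summand has the shape
$$\pm\, P_{\ell_0}(z)\prod_{(i,s)} R_{\ell_{(i,s)},\,i,s}(z),$$
for some partition $\{\ell_0\}\cup\{\ell_{(i,s)}\}=\{0,\ldots,rm\}$. Since $\deg_z P_{\ell_0}(z)\le rmn+\ell_0\le rmn+rm$ by Proposition~\ref{GHG pade}, and each remainder contributes $\mathrm{ord}_\infty\ge n+1$, every summand has $z$-degree (i.e.\ $-\mathrm{ord}_\infty$) at most
$$(rmn+rm)-rm(n+1)=0.$$
Hence $\det M(z)$ lies in $K[[1/z]]$, so $\Delta(z)\in K[z]\cap K[[1/z]]=K$, which is the claim.

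The argument is a routine order-at-infinity estimate once the row reduction is in place; no obstacle is anticipated. The only point that requires a little care is that the row operation uses the scalars $F_s(\alpha_i/z)\in K((1/z))$, so the intermediate matrix $M(z)$ has Laurent-series entries rather than polynomial entries — but the equality of determinants remains valid over the field $K((1/z))$, and the final conclusion $\Delta(z)\in K$ depends only on the combined degree/order estimate above.
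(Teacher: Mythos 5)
Your argument is correct and follows essentially the same route as the paper: row-reduce by subtracting $F_s(\alpha_i/z)$ times the first row, then combine $\deg_z P_\ell(z)=rmn+\ell$ with $\mathrm{ord}_\infty R_{\ell,i,s}(z)\ge n+1$ to conclude $\Delta(z)\in K[z]\cap K[[1/z]]=K$. The only cosmetic difference is that you expand $\det M(z)$ as a sum over permutations, whereas the paper uses a cofactor expansion along the first row; the order/degree bookkeeping is identical.
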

\begin{proof}
We denote the remainder function $R_{\ell,i,s}(z):=P_{\ell}(z)F_{s}(\alpha_i/z)-P_{\ell,i,s}(z)$ ($0\le \ell \le rm$, $1\le i \le m$, $0\le s \le r-1$).
For the matrix in $\Delta(z)$, multiplying the 
first row by the $F_{s}(\alpha_i/z)$ and adding it to the $(i-1)r+s+1$-th row ($1\le i \le m$, $0\le s \le r-1$), we obtain
                     $$ 
                     \Delta(z)=(-1)^{rm}{\rm{det}}
                     {\begin{pmatrix}
                     P_{0}(z) & \dots &P_{rm}(z)\\
                     R_{0,1,r-1}(z) & \dots & R_{rm,1,r-1}(z)\\
                     \vdots    & \ddots & \vdots  \\
                     R_{0,1,0}(z) & \dots & R_{rm,1,0}(z)\\
                     \vdots & \ddots & \vdots\\
                     R_{0,m,r-1}(z) & \dots & R_{rm,m,r-1}(z)\\
                     \vdots    & \ddots & \vdots  \\
                     R_{0,m,0}(z) & \dots & R_{rm,m,0}(z)\\
                     \end{pmatrix}}\enspace. 
                     $$ 
We denote the $(s,t)$-th cofactor by $\Delta_{s,t}(z)$  of the matrix in the right hand side above.
Then we have, by developing along the first row~:
\begin{align} \label{formal power series rep delta}
\Delta(z)=(-1)^{rm}\left(\sum_{\ell=0}^{rm}P_{\ell}(z)\Delta_{1,\ell+1}(z)\right)\enspace.
\end{align} 
Since we have  
\begin{align*}
{\rm{ord}}_{\infty} R_{\ell,i,s}(z)\ge n+1 \ \text{for} \ 0\le \ell \le rm, \ 1\le i\le m \ \text{and} \ 0\le s \le r-1\enspace,
\end{align*}
we obtain
$$
{\rm{ord}}_{\infty}\Delta_{1,\ell+1}(z)\ge (n+1)rm\enspace.
$$
The fact  ${\rm{deg}}\,P_{\ell}(z)=rmn+\ell$ with the lower bound above yields
$$
P_{\ell}(z)\Delta_{1,\ell+1}(z)\in (1/z)\cdot K[[1/z]] \ \text{for} \ 0\le \ell \le rm-1\enspace,
$$
and
\begin{align} \label{const+laurent series}
P_{rm}(z)\Delta_{1,rm+1}(z)\in K[[1/z]]\enspace.
\end{align}
In the relation above, the constant term of $P_{rm}(z)\Delta_{1,rm+1}(z)$ equals to~:
$$
\text{Coefficient of} \ z^{rm(n+1)} \ \text{of} \ P_{rm}(z) \times~~\text{Coefficient of} \ 1/z^{rm(n+1)} \ \text{of} \ \Delta_{1,rm+1}(z)\enspace.
$$ 
thanks to the fact that $\Delta(z)$ in $(\ref{formal power series rep delta})$ is a polynomial of non-positive valuation in $z$ with respect to ${\rm{ord}}_{\infty}$, it is necessarily to be a constant. 
Moreover, the terms of strictly negative valuation  in $z$, they  have to cancel out, hence 
\begin{align}\label{in K}
\Delta(z)=(-1)^{rm}\cdot \left(\sum_{\ell=0}^{rm}P_{\ell}(z)\Delta_{1,\ell+1}(z)\right)=(-1)^{rm}\times \text{Constant term of} \ P_{rm}(z)\Delta_{1,rm+1}(z)\in K\enspace.
\end{align}
This completes the proof of Lemma $\ref{sufficient condition}$.
\end{proof}

\subsection{Second step} 
We now start the second procedure, by factoring $\Delta$ as an element of $K(\alpha_1,\ldots,\alpha_m)$.
We use the same notations as in the proof of Lemma $\ref{sufficient condition}$.
By the equalities $(\ref{const+laurent series})$ and $(\ref{in K})$, we have
{\small{\begin{align} \label{in K2}
\Delta(z)=(-1)^{rm}\times\text{Coefficient of} \ z^{rm(n+1)} \ \text{of} \ P_{rm}(z)\times\text{Coefficient of} \ 1/z^{rm(n+1)} \ \text{of} \ \Delta_{1,rm+1}(z)\enspace.
\end{align}}}
Define a column vector ${\vec{q}}_{\ell}\in K^{rm}$ by
\begin{align*}
\vec{q}_{\ell}={}^t\Biggl(\psi_{{1,r-1}}(t^nP_{\ell}(t)),\ldots, \psi_{{1,0}}(t^nP_{\ell}(t)),\ldots, \psi_{{m,r-1}}(t^nP_{\ell}(t)),\ldots, \psi_{{m,0}}(t^nP_{\ell}(t))\Biggr)\enspace.
\end{align*}
By the definition of $\Delta_{n,1,rm+1}(z)$ with the identities
\begin{align*}
R_{\ell,i,s}(z)=\sum_{k=n}^{\infty}\dfrac{\psi_{{i,s}}(t^kP_{\ell}(t))}{z^{k+1}} \ \text{for} \ 0\le \ell \le rm, \ 1\le i \le m \ \text{and} \ 0\le s \le r-1\enspace,
\end{align*} 
we have 
$$
\text{Coefficient of} \ 1/z^{rm(n+1)} \ \text{of} \ \Delta_{n,1,rm+1}(z)={\rm{det}}
{\begin{pmatrix}
\vec{q}_{0} \ \cdots \ \vec{q}_{rm-1}
\end{pmatrix}}\enspace.
$$
By  $(\ref{in K2})$ with the above identity, we have 
\begin{align} \label{bunkai 0} 
\Delta(z)=(-1)^{rm}\dfrac{1}{(rmn+rm)!}\left(\dfrac{d}{dz}\right)^{rmn+rm}P_{m,rm}(z)\cdot
{\rm{det}}{\begin{pmatrix}
\vec{q}_{0} \ \cdots \ \vec{q}_{rm-1}
\end{pmatrix}}\enspace. 
\end{align} 
Note that, by the definition of $P_{rm}(z)$, we have ${\rm{deg}}\,P_{rm}=(n+1)rm$ and thus 
$$\dfrac{1}{(rmn+rm)!}\left(\dfrac{d}{dz}\right)^{rmn+rm}P_{m,rm}(z)\neq 0\enspace.$$

\subsection{Third step}
Relying on $(\ref{bunkai 0})$, we study here the values
\begin{align} \label{Theta}
\Theta= 
{\rm{det}}  {\begin{pmatrix}
\vec{q}_{0} \ \cdots \ \vec{q}_{rm-1}
\end{pmatrix}}\enspace.
\end{align}
From this subsection, we specify the choice of $\gamma_1,\ldots,\gamma_{r-1}\in K$ as follows.
Replacing $K$ by an appropriate finite extension, we may assume $A(X),B(X)$ be decomposable in $K$. 
Put $$A(X)=(X+\eta_1)\cdots (X+\eta_r), \ B(X)=(X+\zeta_1)\cdots (X+\zeta_r)\enspace,$$ 
where $\eta_1,\ldots, \eta_r,\zeta_1,\ldots,\zeta_r\in K\setminus\{0\}$, {{ being non-negative integer}}.
Take a sequence $(\gamma_i)_{1\le i \le {{r(n+1)-1}}}$ of $K$ with $\gamma_1=\zeta_r,\ldots, \gamma_r=\zeta_1$.
For each $0\le s \le r-1$, there exists a sequence $(a_{k,s})_{0\le k \le rn}\in K^{rn+1}$ with 
\begin{align}\label{expansion}
\prod_{j=1}^nA(X-j)=\sum_{k=0}^{rn}a_{k,s}\prod_{{{w}}=1}^k(X+\gamma_{r-s-1+{{w}}})\enspace,
\end{align}
where it read $\prod_{{{w}}=1}^k(X+\gamma_{r-s-1+{{w}}})=1$ if $k=0$. 
We now simplify the determinant $\Theta$ using the quantities $a_{0,s}$ to prove the non-vanishing property of $\Theta$.
\begin{lemma} \label{simplify}
Put $H_{\ell}(t)=t^{\ell}\prod_{i=1}^m(t-\alpha_i)^{rn}$ for $0\le \ell \le rm-1$.
Then we have
\begin{align*}
\Theta=\dfrac{\prod_{i=1}^m\alpha^r_i\prod_{s=0}^{r-1}a_{0,s}^m}{(n-1)!^{r^2m}}\cdot {\rm{det}}\left({\rm{Eval}}_{\alpha_i} \bigcirc_{{{w}}=0}^s(\theta_t+\gamma_{r-s+{{w}}})^{-1} (t^nH_{\ell}(t))\right)_{\substack{0\le \ell \le rm-1 \\ 1\le i \le m, 0\le s \le r-1}}.
\end{align*}
\end{lemma}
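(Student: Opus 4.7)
The plan is to compute the matrix entries defining $\Theta$ explicitly and to exhibit a triangular decomposition reducing the determinant to the claimed form.

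First, I apply Lemma~\ref{difference}(ii) with $k=n$ to the defining expression of $P_\ell(z)$. The inverse factors $B(\theta_t-j)^{-1}$ for $0\le j\le n-1$ produced by commuting $[t^n]$ past $\mathcal{T}_{\boldsymbol{c}}$ telescope against $\bigcirc_{j=1}^{n-1}B(\theta_t+j-n)$ to leave a single factor $B(\theta_t)^{-1}$, yielding
\begin{equation*}
(n-1)!^{r}\,t^{n}P_{\ell}(t)=\mathcal{T}_{\boldsymbol{c}}\bigcirc_{j=1}^{n}A(\theta_t-j)\circ B(\theta_t)^{-1}\bigl(t^{n}H_{\ell}(t)\bigr).
\end{equation*}
Applying $\psi_{i,s}$, together with $\psi_{i,s}=\psi_{i,0}\circ(\theta_t+\gamma_1)\circ\cdots\circ(\theta_t+\gamma_s)$, the commutation of polynomials in $\theta_t$ with $\mathcal{T}_{\boldsymbol{c}}$, the identity $\psi_{i,0}\circ\mathcal{T}_{\boldsymbol{c}}=[\alpha_i]\circ{\rm Eval}_{\alpha_i}$, and the factorization $B(\theta_t)=\prod_{k=1}^{r}(\theta_t+\gamma_k)$ coming from the choice $\gamma_k=\zeta_{r+1-k}$, gives
\begin{equation*}
\psi_{i,s}\bigl((n-1)!^{r}t^{n}P_{\ell}(t)\bigr)=\alpha_{i}\cdot{\rm Eval}_{\alpha_i}\bigcirc_{j=1}^{n}A(\theta_t-j)\circ\prod_{k=s+1}^{r}(\theta_t+\gamma_k)^{-1}\bigl(t^{n}H_{\ell}(t)\bigr).
\end{equation*}

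Next, I substitute the expansion $\prod_{j=1}^{n}A(X-j)=\sum_{k=0}^{rn}a_{k,s}\prod_{w=1}^{k}(X+\gamma_{r-s-1+w})$. Specializing at $X=-\gamma_{r-s}$ annihilates every $k\ge 1$ term and gives the distinguished identity $a_{0,s}=\prod_{j=1}^{n}A(-\gamma_{r-s}-j)$, which will be the diagonal coefficient of the decomposition. Splitting the resulting sum into its $k=0$ part and its higher pieces, the $k=0$ term matches (up to the reindexing of $s$ that conforms to the row ordering of $\vec{q}_\ell$) the entry $M_{i,s,\ell}={\rm Eval}_{\alpha_i}\bigcirc_{w=0}^{s}(\theta_t+\gamma_{r-s+w})^{-1}(t^{n}H_{\ell})$ from the statement. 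The $k\ge 1$ contributions are then reduced by iterating the identity $(\theta_t+\gamma_a)(\theta_t+\gamma_b)^{-1}=1+(\gamma_a-\gamma_b)(\theta_t+\gamma_b)^{-1}$, together with the fact that $t^{n}H_{\ell}(t)$ vanishes to order $rn$ at each $\alpha_i$ (so that any polynomial in $\theta_t$ of degree at most $rn-1$ applied to $t^{n}H_{\ell}$ evaluates to $0$ at $\alpha_i$). The outcome is that each row of the matrix defining $\Theta$ takes the triangular form $\tfrac{\alpha_{i}}{(n-1)!^{r}}\bigl(a_{0,s}M_{i,s,\ell}+\sum_{s'}\mu_{s'}M_{i,s',\ell}\bigr)$, the sum running over $s'$ strictly preceding $s$ in the chosen ordering.

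The conclusion follows by multilinearity of the determinant: pulling out the scalar $\alpha_i/(n-1)!^{r}$ from each of the $rm$ rows produces the prefactor $\prod_i\alpha_i^r/(n-1)!^{r^2m}$, while the lower-triangular scalar matrix relating the original rows to the $M_{i,s,\ell}$, whose diagonal entries are the $a_{0,s}$, contributes the factor $\prod_{s=0}^{r-1}a_{0,s}^m$ (each $s$ appearing once per $i$), leaving exactly $\det(M_{i,s,\ell})$. The main obstacle is the explicit verification of the triangular decomposition: one must check, $k$ by $k$, that every contribution $a_{k,s}{\rm Eval}_{\alpha_i}\prod_{w=1}^{k}(\theta_t+\gamma_{r-s-1+w})\prod_{k'=s+1}^{r}(\theta_t+\gamma_{k'})^{-1}(t^{n}H_{\ell})$ indeed lies in the $K$-span of the $M_{i,s',\ell}$ with $s'$ strictly smaller. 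This is an algebraically elementary but intricate book-keeping which crucially exploits the design of the basis $\{\prod_{w=1}^{k}(X+\gamma_{r-s-1+w})\}_{k}$, tailored precisely so that $a_{0,s}$ absorbs every higher-order contribution into the diagonal term.
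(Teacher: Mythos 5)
Your plan — Lemma~\ref{difference}(ii) with $k=n$, the identity $\psi_{i,0}\circ\mathcal{T}_{\boldsymbol{c}}=[\alpha_i]\circ\ev_{\alpha_i}$, the expansion \eqref{expansion}, Leibniz-rule vanishing on $t^nH_\ell$, and a triangular change of basis with diagonal $a_{0,s}$ — is the same as the paper's, and your derivation of $\psi_{i,s}\bigl((n-1)!^r t^nP_\ell(t)\bigr)=\alpha_i\cdot\ev_{\alpha_i}\bigcirc_{j=1}^n A(\theta_t-j)\circ\prod_{k'=s+1}^r(\theta_t+\gamma_{k'})^{-1}(t^nH_\ell)$ is correct. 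Where you differ is precisely in the one step you flag as the ``main obstacle,'' and that difference is what leaves your proof with a gap.

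The paper pairs the row $\psi_{i,r-1-s}$ with the expansion coefficients $a_{k,s}$, so that for $0\le k\le s$ the numerator $\prod_{w=1}^{r-s-1+k}(\theta_t+\gamma_w)$ exactly cancels the first $r-s-1+k$ factors of $B(\theta_t)^{-1}$ and leaves $\prod_{w=r-s+k}^r(\theta_t+\gamma_w)^{-1}$, which is precisely the operator defining $M_{i,s-k,\ell}$; for $k>s$ the result is a polynomial in $\theta_t$ of degree $k-s-1\le rn-1$, which vanishes on $t^nH_\ell$ at $\alpha_i$ by Leibniz. With that pairing the triangular decomposition is immediate and no operator identity is needed. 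You instead pair row $\psi_{i,s}$ with $a_{k,s}$, and with that choice the numerator indices $r-s,\ldots,r-s-1+k$ do \emph{not} cancel the denominator indices $s+1,\ldots,r$; the result is a rational expression in $\theta_t$ that is not $M_{i,s',\ell}$ for any $s'$. You propose to repair this with $(\theta_t+\gamma_a)(\theta_t+\gamma_b)^{-1}=1+(\gamma_a-\gamma_b)(\theta_t+\gamma_b)^{-1}$, but you do not show that iterating this lands you in the $K$-span of the $M_{i,s',\ell}$ with $s'$ below the diagonal, nor that the resulting diagonal coefficient is in fact $a_{0,s}$. (As written, the $k=0$ term alone contributes $a_{0,s}$ to the operator $\prod_{k'=s+1}^r(\theta_t+\gamma_{k'})^{-1}$, which is $M_{i,r-1-s,\ell}$ and not $M_{i,s,\ell}$; after the reductions more contributions accumulate on that position, and it is not obvious without computation that they collapse to the paper's $a_{0,r-1-s}$ — much less to the $a_{0,s}$ you assert.) The sentence ``$a_{0,s}$ absorbs every higher-order contribution into the diagonal term'' is not a statement that one can verify; a triangular decomposition has a single coefficient on the diagonal, not an accumulated one. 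The fix is simply to reindex: apply \eqref{expansion} with parameter $r-1-s$ when treating $\psi_{i,s}$ (equivalently, treat the rows as $\psi_{i,r-1-s}$ and use $a_{k,s}$). Then the numerator $\prod_{w=1}^k(\theta_t+\gamma_{s+w})$ telescopes against $\prod_{k'=s+1}^r(\theta_t+\gamma_{k'})^{-1}$, the $k\le r-1-s$ terms become lower $M$'s, the $k>r-1-s$ terms vanish by Leibniz, and the intricate book-keeping you defer becomes two lines.
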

\begin{proof}
Using \eqref{phi 0} and \eqref{expansion}, we have 
{\small{\begin{align}
\psi_{{i,r-1-s}}\circ {\mathcal{T}}_{\bold{c}}\bigcirc_{j=1}^n A(\theta_t-j)\circ B(\theta_t)^{-1}&=\sum_{k=0}^sa_{k,s}\psi_{{i,0}}\circ {\mathcal{T}}_{\bold{c}} \bigcirc_{{{w}}=1}^{r-s-1+k}(\theta_t+\gamma_{{{w}}})\circ B(\theta_t)^{-1} \label{kichaku 1}\\
&+\sum_{k=s+1}^{rn}a_{k,s}\psi_{{i,0}} \circ {\mathcal{T}}_{\bold{c}}\circ B(\theta_t) \bigcirc_{{{w}}=r+1}^{r-s-1+k}(\theta_t+\gamma_{{{w}}})\circ B(\theta_t)^{-1}. \nonumber
\end{align}}}
Since $${\rm{deg}}\prod_{{{w}}=r+1}^{r-s-1+k}(X+\gamma_{{{w}}})=k-s-1\le rn-1$$ ($s+1 \le k \le rn$), by the Leibniz rule, the polynomial
$\bigcirc_{{{w}}=r+1}^{r-s-1+k}(\theta_t+\gamma_{{w}}) (t^nH_{\ell}(t))$ belongs to the ideal $(t-\alpha_i)={\rm{ker}}\, {\rm{Eval}}_{\alpha_i}$. 
Therefore, using \eqref{eval i}, we obtain
\begin{align*} 
&\sum_{k=s+1}^{rn}a_{k,s}\psi_{{i,0}} \circ {\mathcal{T}}_{\bold{c}}\circ B(\theta_t) \bigcirc_{{{w}}=r+1}^{r-s-1+k}(\theta_t+\gamma_{{w}}) \circ B(\theta_t)^{-1}(t^nH_{\ell}(t))\nonumber\\
&=\sum_{k=s+1}^{rn}a_{k,s}[\alpha_i]\circ {\rm{Eval}}_{\alpha_i} \bigcirc_{{{w}}=r+1}^{r-s-1+k}(\theta_t+\gamma_{{{w}}}) (t^nH_{\ell}(t))=0\enspace.  
\end{align*}
By the above equality with $(\ref{kichaku 1})$, we have 
\begin{align*}
\psi_{{i,r-1-s}}\circ {\mathcal{T}}_{\bold{c}}\bigcirc_{j=1}^n A(\theta_t-j)\circ B(\theta_t)^{-1}(t^nH_{\ell}(t))
&=\sum_{k=0}^sa_{k,s}\psi_{{i,0}}\circ {\mathcal{T}}_{\bold{c}} \bigcirc_{{{w}}=1}^{r-s-1+k}(\theta_t+\gamma_{{w}})\circ B(\theta_t)^{-1} (t^nH_{\ell}(t))\\
&=\sum_{k=0}^sa_{k,s}[\alpha_i]\circ {\rm{Eval}}_{\alpha_i} \bigcirc_{{{w}}=k}^{s}(\theta_t+\gamma_{r-s+{{w}}})^{-1}(t^nH_{\ell}(t))\enspace.
\end{align*}
Interpreting the relations above as linear manipulations of lines, the columns let the determinant unchanged. This completes the proof of Lemma $\ref{simplify}$.
\end{proof}
We now study when the quantity $\prod_{s=0}^{r-1}a_{0,s}^m$ does not vanish. 
The following lemma will be used to calculate each $a_{0,s}$.
\begin{lemma} \label{constant term}
Let $u$ be a strictly positive integer and $\tilde{\gamma}_1,\ldots,\tilde{\gamma}_u,\tilde{\eta}_1,\ldots,\tilde{\eta}_u\in K$.
Denote $$(X+\tilde{\eta}_1)\cdots(X+\tilde{\eta}_u)=b_{u,0}+\sum_{k=1}^ub_{u,k}(X+\tilde{\gamma}_1)\cdots(X+\tilde{\gamma}_k)\enspace,$$
with $b_{u,0},b_{u,1},\ldots,b_{u,u}\in K$. Then we have $b_{u,0}=(\tilde{\eta}_1-\tilde{\gamma}_1)\cdots(\tilde{\eta}_u-\tilde{\gamma}_1)$. 
\end{lemma}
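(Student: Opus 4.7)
The plan is to prove this lemma by a single evaluation. The right-hand side is written as a constant term $b_{u,0}$ plus a sum in which every summand is divisible by the factor $(X+\tilde{\gamma}_1)$. Thus, substituting $X = -\tilde{\gamma}_1$ annihilates the entire sum, leaving $b_{u,0}$ alone on the right-hand side. The left-hand side, on the other hand, evaluates to
\[
(-\tilde{\gamma}_1+\tilde{\eta}_1)(-\tilde{\gamma}_1+\tilde{\eta}_2)\cdots(-\tilde{\gamma}_1+\tilde{\eta}_u)
= (\tilde{\eta}_1-\tilde{\gamma}_1)(\tilde{\eta}_2-\tilde{\gamma}_1)\cdots(\tilde{\eta}_u-\tilde{\gamma}_1),
\]
which is the asserted formula.

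There is no obstacle here: the argument is a one-line evaluation trick, and it does not even require induction. The only minor subtlety to mention for the reader is that for $k\ge 1$ the product $(X+\tilde{\gamma}_1)\cdots(X+\tilde{\gamma}_k)$ always contains the factor $(X+\tilde{\gamma}_1)$, which is what makes the specialization $X = -\tilde{\gamma}_1$ select the constant coefficient $b_{u,0}$. This expansion, incidentally, is nothing but the representation of the polynomial $(X+\tilde{\eta}_1)\cdots(X+\tilde{\eta}_u)$ in the basis
\[
1,\ (X+\tilde{\gamma}_1),\ (X+\tilde{\gamma}_1)(X+\tilde{\gamma}_2),\ \ldots,\ (X+\tilde{\gamma}_1)\cdots(X+\tilde{\gamma}_u)
\]
of the polynomials of degree at most $u$, so the coefficients $b_{u,k}$ are uniquely determined and the identity makes sense.

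In the intended application, this lemma will be used with $\tilde{\gamma}_1 = \gamma_{r-s}$ (so that the terms $(X+\gamma_{r-s-1+w})$ appearing for $w\ge 1$ in \eqref{expansion} begin with $(X+\gamma_{r-s})$) and with $\{\tilde{\eta}_j\}$ drawn from shifts of the roots of $A$, thereby giving an explicit product formula for $a_{0,s}$ and allowing one to check that $\prod_{s=0}^{r-1}a_{0,s}\neq 0$ under the hypotheses of Theorem~\ref{hypergeometric}. That subsequent non-vanishing check is where the arithmetic assumptions on $a_k$ and $a_k+1-b_j$ will enter; the present lemma itself is a purely formal identity over $K$.
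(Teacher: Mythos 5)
Your proof is correct, and it takes a genuinely different (and shorter) route than the paper. The paper proves the identity by induction on $u$: for $u=1$ one writes $X+\tilde{\eta}_1=(\tilde{\eta}_1-\tilde{\gamma}_1)+(X+\tilde{\gamma}_1)$, and for the inductive step one multiplies the decomposition at level $u$ by $(X+\tilde{\eta}_{u+1})$, splits $X+\tilde{\eta}_{u+1}=(X+\tilde{\gamma}_1)+(\tilde{\eta}_{u+1}-\tilde{\gamma}_1)$ in the constant term and $X+\tilde{\eta}_{u+1}=(X+\tilde{\gamma}_{k+1})+(\tilde{\eta}_{u+1}-\tilde{\gamma}_{k+1})$ in the $k$-th summand, and reads off the recursion $b_{u+1,0}=b_{u,0}(\tilde{\eta}_{u+1}-\tilde{\gamma}_1)$. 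Your argument instead observes that every term of the sum on the right-hand side with $k\geq 1$ is divisible by $(X+\tilde{\gamma}_1)$, so the substitution $X=-\tilde{\gamma}_1$ kills all of them and isolates $b_{u,0}$, while the left-hand side evaluates immediately to $\prod_{j=1}^u(\tilde{\eta}_j-\tilde{\gamma}_1)$. This evaluation trick is more economical: it dispenses with the induction, requires no bookkeeping, and makes the role of $\tilde{\gamma}_1$ (as opposed to the other $\tilde{\gamma}_k$) transparent. The paper's inductive proof, by contrast, exhibits the full recursion and so in principle carries more information about the higher coefficients $b_{u,k}$, but since only the constant term is needed here your direct computation is the cleaner path. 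Your remark about the Newton-type basis $\{1,(X+\tilde\gamma_1),\ldots,(X+\tilde\gamma_1)\cdots(X+\tilde\gamma_u)\}$ correctly justifies that the coefficients are well defined, and your reading of how the lemma is deployed in the computation of $a_{0,s}$ (with $\tilde\gamma_1=\gamma_{r-s}=\zeta_{s+1}$ and the $\tilde\eta_j$ being the shifted roots $\eta_i-k$) matches the paper.
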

\begin{proof}
We prove the lemma by induction on $u$. 
In the case of $u=1$, we have $$X+\tilde{\eta}_1=\tilde{\eta}_1-\tilde{\gamma}_1+(X+\tilde{\gamma}_1)\enspace.$$
This shows $b_{1,0}=\tilde{\eta}_1-\tilde{\gamma}_1$ then yields the assertion.
Suppose that  the current lemma be true for $u\ge 1$. We show its validity for $u+1$. In this case we get
\begin{align*}
&(X+\tilde{\eta}_1)\cdots(X+\tilde{\eta}_u)(X+\tilde{\eta}_{u+1})=\left[b_{u,0}+\sum_{k=1}^ub_{u,k}(X+\tilde{\gamma}_1)\cdots(X+\tilde{\gamma}_k)\right](X+\tilde{\eta}_{u+1})\\
&=b_{u,0}(X+\tilde{\gamma}_1+\tilde{\eta}_{u+1}-\tilde{\gamma}_1)+\sum_{k=1}^ub_{u,k}(X+\tilde{\gamma}_1)\cdots(X+\tilde{\gamma}_k)(X+\tilde{\gamma}_{k+1}+\tilde{\eta}_{u+1}-\tilde{\gamma}_{k+1})\enspace.
\end{align*}
The above identity yields $b_{u+1,0}=b_{u,0}(\tilde{\eta}_{u+1}-\tilde{\gamma}_1)$. By induction hypothesis for $b_{u,0}$, we conclude 
$$b_{u+1,0}=(\tilde{\eta}_1-\tilde{\gamma}_1)\cdots(\tilde{\eta}_u-\tilde{\gamma}_1)(\tilde{\eta}_{u+1}-\tilde{\gamma}_1)\enspace.$$
This completes the proof of Lemma $\ref{constant term}$.
\end{proof}
\begin{proposition}
The following two properties are equivalent.

$(i)$ The value $\prod_{s=0}^{r-1}a_{0,s}^m$ is non-zero.

$(ii)$ For $1\le i ,j\le r$ and $1\le k \le n$, we have $\eta_i-k-\zeta_j\neq 0.$  
\end{proposition}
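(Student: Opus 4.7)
The plan is to compute $a_{0,s}$ explicitly via Lemma~\ref{constant term} and read off the non-vanishing condition.

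First, I would apply Lemma~\ref{constant term} to the expansion \eqref{expansion}. Writing
\begin{align*}
\prod_{j=1}^{n}A(X-j)=\prod_{j=1}^{n}\prod_{i=1}^{r}(X+\eta_i-j),
\end{align*}
the left-hand side is a polynomial of degree $u=rn$ whose factors have ``shifts'' $\tilde{\eta}_{(i,j)}=\eta_i-j$ (indexed by pairs $(i,j)$ with $1\le i\le r$, $1\le j\le n$). In the basis appearing on the right-hand side of \eqref{expansion}, the first factor is $(X+\gamma_{r-s-1+1})=(X+\gamma_{r-s})$, so Lemma~\ref{constant term} gives
\begin{align*}
a_{0,s}=\prod_{j=1}^{n}\prod_{i=1}^{r}\bigl(\eta_i-j-\gamma_{r-s}\bigr).
\end{align*}

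Next, I would substitute the prescribed values of $\gamma_1,\ldots,\gamma_{r-1}$. By construction $\gamma_1=\zeta_r,\gamma_2=\zeta_{r-1},\ldots,\gamma_r=\zeta_1$, i.e.\ $\gamma_w=\zeta_{r+1-w}$ for $1\le w\le r$. Since $0\le s\le r-1$ we have $1\le r-s\le r$, hence $\gamma_{r-s}=\zeta_{s+1}$. Therefore
\begin{align*}
a_{0,s}=\prod_{j=1}^{n}\prod_{i=1}^{r}\bigl(\eta_i-j-\zeta_{s+1}\bigr).
\end{align*}

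Taking the product over $0\le s\le r-1$ (equivalently, over $1\le s+1\le r$),
\begin{align*}
\prod_{s=0}^{r-1}a_{0,s}^{m}=\Biggl[\prod_{i=1}^{r}\prod_{j=1}^{n}\prod_{s=1}^{r}\bigl(\eta_i-j-\zeta_{s}\bigr)\Biggr]^{m}.
\end{align*}
A product of elements of a field is non-zero if and only if each factor is non-zero, so this quantity is non-zero precisely when $\eta_i-k-\zeta_j\neq 0$ for every triple $(i,j,k)$ with $1\le i,j\le r$ and $1\le k\le n$. This is exactly the equivalence $(i)\Leftrightarrow (ii)$ sought.

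The only non-routine step is the identification of the first $\gamma$-index appearing in \eqref{expansion}: one must track that for the basis $\prod_{w=1}^{k}(X+\gamma_{r-s-1+w})$, the role of $\tilde{\gamma}_1$ in Lemma~\ref{constant term} is played by $\gamma_{r-s}$, and that under the chosen labeling this equals $\zeta_{s+1}$. Once this bookkeeping is done, the statement follows at once.
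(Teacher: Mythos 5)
Your proof is correct and follows the paper's own argument step for step: apply Lemma~\ref{constant term} with $u=rn$, $\tilde{\gamma}_1=\gamma_{r-s}$, and the $\tilde{\eta}$'s equal to the shifts $\eta_i-j$, then substitute $\gamma_{r-s}=\zeta_{s+1}$ to conclude. The bookkeeping you flag as the "only non-routine step" is exactly the observation the paper makes, so the two proofs coincide.
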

\begin{proof}
Let $s$ be an integer with $0\le s \le r-1$.
Applying Lemma $\ref{constant term}$ with $u=rn$ and
$$(\tilde{\gamma}_1,\ldots,\tilde{\gamma}_{rn})=(\gamma_{r-s},\ldots,\gamma_{r(n+1)-s-1}) , \ \ \ (\tilde{\eta}_1,\ldots,\tilde{\eta}_{rn})=(\eta_i-k)_{1\le i \le r, 1\le k \le n}\enspace,$$
we get $$a_{0,s}=\prod_{i=1}^r \left[\prod_{k=1}^{n}(\eta_i-k-\gamma_{r-s})\right], \  \ (0\le s \le r-1)\enspace.$$
Since $\gamma_{r-s}=\zeta_{s+1}$ for $0\le s \le r-1$, the proposition follows.
\end{proof}
In the following, we assume $$\eta_i-\zeta_j \ \text{{\emph{not be}} strictly positive integers for} \ 1\le i,j \le r\enspace.$$
\subsection{Fourth step}
Now, we take the ring $K[t_{i,s}]_{1\leq i\leq m,0\leq s\leq r-1}$, the ring of polynomials in $rm$ variables over $K$.  
Recall the polynomial $B(X)$ is decomposed as $B(X)=(X+\zeta_1)\cdots(X+\zeta_r)$ with $\zeta_i\in K$ which are not negative integer. 
We choose $(\gamma_i)_{1\le i \le r}\in K^r$ by $\gamma_1=\zeta_r,\ldots,\gamma_r=\zeta_1$.
For each variable $t_{i,s}$, one has a well defined map for $\alpha\in K$~:
{\small
{\begin{align}
\label{defvarphi}
{{\tilde{\psi}}}_{\alpha,i,s}={\rm{Eval}}_{t_{i,s}\rightarrow \alpha} \bigcirc_{{{w}}=0}^{s}(\theta_{t_{i,s}}+\gamma_{r-s+{{w}}})^{-1}:K[t_{i,s}]_{\substack{1\leq i\leq m \\ 0\leq s\leq r-1}}\longrightarrow K[t_{i',s'}]_{(i',s')\neq (i,s)}; \ \
t^k_{i,s} \mapsto \dfrac{\alpha^k}{\prod_{{{w}}=0}^s(k+\gamma_{r-s+{{w}}})}\enspace. 
\end{align}}}
Using the definition above where $K[t_{i,s}]_{1\leq i\leq m,0\leq s\leq r-1}$ is seen as the  one variable polynomial ring $K'[t_{i,s}]$ over $K'=K[t_{i',s'}]_{(i',s')\neq (i,s)}$.
  
We now define for non-negative integers $n,u$
$$
{\hat{P}}_{n,u}(t_{i,s})=\prod_{i=1}^{m}\prod_{s=0}^{r-1}\left[ t_{i,s}^u\prod_{j=1}^m(t_{i,s}-\alpha_j)^{rn}\right]\prod_{(i_1,s_1)<(i_2,s_2)}(t_{i_2,s_2}-t_{i_1,s_1})\enspace,
$$
where the order $(i_1,s_{1})<(i_2,s_{2})$ means lexicographical order.
{{In the following of this section, the index $n$ will be conveniently omitted, to be easier to read.}}

Also set (when no confusion is deemed to occur, we omit the subscripts $\underline{\alpha}=(\alpha_1,\ldots,\alpha_m)$):
$$\Psi=\Psi_{\underline{\alpha}}:=\bigcirc_{i=1}^{m}\bigcirc_{s=0}^{r-1}{{\tilde{\psi}}}_{\alpha_i,i,s}\enspace.$$
Note that, by the definition of 
$\Theta$ (see \eqref{Theta}),
we have
\begin{align} \label{rn}
\Theta=\dfrac{\prod_{i=1}^m\alpha^r_i\prod_{s=0}^{r-1}a_{0,s}^m}{(n-1)!^{r^2m}}\Psi({\hat{P}}_{n,n})\enspace.
\end{align}
Let $u$ be a non-negative integer, we study the value
\begin{align}\label{Cn,u,m}
{{C_{n,u,m}=}}C_{u,m}:=\Psi({\hat{P}}_{u})\enspace.
\end{align} 
The following of subsection, we occupy the proof of the following property of $C_{u,m}$.
\begin{proposition} \label{decompose Cnum}
There exists a constant $c_{u,m}\in K$ with
$$
C_{u,m}=c_{u,m}\prod_{i=1}^m\alpha^{ru+r^2n+\binom{r}{2}}_i \prod_{1\le i_1<i_2\le m}(\alpha_{i_2}-\alpha_{i_1})^{(2n+1)r^2}\enspace.
$$
\end{proposition}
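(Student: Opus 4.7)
The plan is to view $C_{u,m}$ as a polynomial in $\alpha_1,\ldots,\alpha_m$ and to establish the factorization by (a) computing its total degree, (b) showing divisibility by $\alpha_i^{ru+r^2n+\binom{r}{2}}$ for each $i$, and (c) showing divisibility by $(\alpha_{i_2}-\alpha_{i_1})^{(2n+1)r^2}$ for each pair $i_1<i_2$; the constant $c_{u,m}$ then arises from a degree count.

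The first move is to rewrite the Vandermonde prefactor of $\hat{P}_{n,u}$ via the classical identity $\prod_{(i_1,s_1)<(i_2,s_2)}(t_{i_2,s_2}-t_{i_1,s_1}) = \det(t_{\pi(a)}^{b-1})_{a,b}$, where $\pi$ is the lexicographic enumeration of the pairs $(i,s)$. Absorbing the monomial factors into the rows yields $\hat{P}_{n,u} = \det(g_b(t_{\pi(a)}))_{a,b}$ with $g_b(t):= t^{u+b-1}\prod_{j=1}^m(t-\alpha_j)^{rn}$. Since $\Psi$ acts one variable at a time, this gives $C_{u,m} = \det(M_{a,b})$ with $M_{a,b} = \tilde{\psi}_{\alpha_{i(a)},i(a),s(a)}(g_b)$, manifestly a polynomial in the $\alpha$'s of total degree at most $rmu + r^2m^2n + \binom{rm}{2}$, which matches the right-hand side. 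For (b), a binomial expansion of $(t-\alpha_i)^{rn}$ followed by $\tilde{\psi}_{\alpha_i,i,s}$ gives $M_{(i,s),b} = \alpha_i^{u+b-1+rn}\,\varphi_{s,b} + O(\alpha_i^{u+b+rn})$ for an explicit $\varphi_{s,b}$ depending only on $\{\alpha_j\}_{j\neq i}$; in the Leibniz expansion, every permutation summand has $\alpha_i$-valuation at least $\sum_{s=0}^{r-1}(u+\sigma((i,s))-1+rn) \ge \sum_{b=1}^{r}(u+b-1+rn) = ru+r^2n+\binom{r}{2}$, attained when the $r$ rows $(i,\cdot)$ are sent to the $r$ smallest columns.

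Step (c) is the main obstacle. Setting $\epsilon:= \alpha_{i_2}-\alpha_{i_1}$, at $\epsilon=0$ the operators $\tilde{\psi}_{\alpha_{i_1},i_1,s}$ and $\tilde{\psi}_{\alpha_{i_1},i_2,s}$ act identically on any polynomial, so the rows of $M$ indexed by $(i_2,s)$ coincide pairwise with those indexed by $(i_1,s)$, and a naive row-difference yields only $\epsilon^r$. The exponent $(2n+1)r^2$ is far beyond what symmetry alone can give, since swapping $\alpha_{i_1}\leftrightarrow\alpha_{i_2}$ multiplies $\det(M_{a,b})$ merely by $(-1)^r$. The idea is instead to exploit the enhancement that underlies Proposition~\ref{GHG pade}: at $\epsilon=0$ the polynomial $g_b$ acquires the \emph{doubled} factor $(t-\alpha_{i_1})^{2rn}$, which in Proposition~\ref{GHG pade} was precisely what produced the sharpened vanishing $\psi_{i,s}(t^k P_\ell)=0$ for $0\le k\le 2n-1$ rather than merely $k\le n-1$. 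Translating this to $M$ via the expansion $(t-\alpha_{i_2})^{rn} = \sum_k\binom{rn}{k}(-\epsilon)^k(t-\alpha_{i_1})^{rn-k}$ inside $g_b$, together with a Taylor expansion of $\tilde{\psi}_{\alpha_{i_1}+\epsilon,i_2,s}$ around $\alpha_{i_1}$ and the explicit action of $(\theta_t+\gamma_{r-s+w})^{-1}$ on monomials, one writes each row $(i_2,s)$ as an explicit $K[\epsilon]$-linear combination of the rows $(i_1,s')$ modulo $\epsilon^{(2n+1)r}\,K[\epsilon]^{rm}$. Applying this reduction to all $r$ rows $(i_2,\cdot)$ contributes $\epsilon^{(2n+1)r}$ from each, totalling $\epsilon^{(2n+1)r^2}$ in the determinant.

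Combining (a), (b), and (c), the quotient $C_{u,m}\big/\bigl[\prod_i\alpha_i^{ru+r^2n+\binom{r}{2}}\prod_{i_1<i_2}(\alpha_{i_2}-\alpha_{i_1})^{(2n+1)r^2}\bigr]$ is a polynomial in $\alpha_1,\ldots,\alpha_m$ of total degree zero, hence a constant $c_{u,m}\in K$. The main technical difficulty lies in the $K[\epsilon]$-reduction of step (c): identifying the explicit combination of the rows $(i_1,s')$ that approximates each $(i_2,s)$ to the required order is a generalized-Wronskian calculation whose exponent $(2n+1)r^2$ genuinely reflects the simultaneous doubling of the order of the common zero at $t=\alpha_{i_1}$ and the $r$-fold multiplicity of the operators indexed by $s$.
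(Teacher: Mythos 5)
Your overall plan — (a) degree count, (b) divisibility by $\alpha_i^{ru+r^2n+\binom{r}{2}}$, (c) divisibility by $(\alpha_{i_2}-\alpha_{i_1})^{(2n+1)r^2}$, then deduce $c_{u,m}$ is a constant — coincides with the paper's three-step plan, and your treatment of (a) and (b) is correct and essentially equivalent to the paper's Lemma~\ref{homo} (the paper obtains it by the change of variable $t_{i,s}\mapsto\alpha_i t_{i,s}$, whereas you expand the Vandermonde factor into the determinant $\det(g_b(t_{\pi(a)}))$ and track the lowest $\alpha_i$-power row by row; both are fine).

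Step (c) is where there is a genuine gap. You openly acknowledge that you do not carry out the reduction ("the main technical difficulty lies in the $K[\epsilon]$-reduction of step (c)"), and the specific intermediate claim you propose — that \emph{each} row $(i_2,s)$ can be written as a $K[\epsilon]$-combination of the rows $(i_1,s')$ modulo $\epsilon^{(2n+1)r}$ — is strictly stronger than what is needed and is not obviously true. The paper proceeds quite differently: it introduces the specialization $\Delta_\alpha$ (setting $\beta=\alpha$, i.e.\ $\epsilon=0$) and proves directly that $\frac{\partial^\ell}{\partial\alpha^\ell}\Delta_\alpha(\Psi_\alpha\circ\Psi_\beta(\hat P_u))=0$ for all $0\le\ell\le(2n+1)r^2-1$. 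This requires (i) the commutation/iteration formula $\frac{\partial}{\partial\alpha}\Psi_\alpha=\Psi_\alpha\frac{\partial}{\partial\alpha}+\frac{1}{\alpha}\sum_s\Psi_\alpha\circ\theta_{X_s}$ from Lemma~\ref{prelimi}; (ii) rewriting $\frac{\partial^\ell}{\partial\alpha^\ell}\Psi_\alpha(\hat P)$ as a sum over multi-indices $\boldsymbol{\ell}$ of operators $\Psi_{\alpha,\boldsymbol{\Xi}_{\boldsymbol{k}}}$ applied to derivatives of $\hat P$; (iii) vanishing of individual terms by antisymmetry of $g$ (Lemma~\ref{symetries}) and by multiplicity of the root at $t=\alpha$ doubled under specialization (Lemma~\ref{racinesymetrique}); (iv) the combinatorial Lemmas~\ref{condition} and~\ref{combi} showing no non-vanishing term can occur below $\ell=(2n+1)r^2$. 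In particular, the combinatorics in Lemma~\ref{combi} forces the quantities $\ell_s-s-1$ to take $r$ \emph{distinct} values, which suggests the true vanishing order is distributed unevenly over the $r$ rows rather than being $(2n+1)r$ for each — another reason to be cautious about your per-row claim even if it could be made to work. To close the gap you would either have to prove your row-reduction estimate (which amounts to a generalized-Wronskian computation you have not done) or switch to the derivative-plus-combinatorics argument of the paper.
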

It is also easy to see that since all the variables $t_{i,s}$ have been specialized, $C_{u,m}\in K$ is a polynomial in the $\alpha_i$. The statement is then about a factorization of this polynomial. 
To prove of Proposition $\ref{decompose Cnum}$, we are going to perform the following steps~:
\begin{itemize}
\item[$({{a}})$] Show that $C_{u,m}$ is homogeneous of degree $m[r(u+1)+r^2n+{r\choose 2}]+{m\choose 2}(2n+1)r^2$.
\item[$({{b}})$] Show that $\prod_{i=1}^m\alpha^{r(u+1)+r^2n+\binom{r}{2}}_i$ divides $C_{u,m}$.
\item[$({{c}})$] Show that $\prod_{1\le i_1<i_2\le m}(\alpha_{i_2}-\alpha_{i_1})^{(2n+1)r^2}$ divides $C_{u,m}$.
\end{itemize}
We first prove $({{a}})$ and $({{b}})$.
\begin{lemma} \label{homo}
$C_{u,m}$ is homogeneous of degree $m[r(u+1)+r^2n+{r\choose 2}]+{m\choose 2}(2n+1)r^2$ and is divisible by
$\prod_{i=1}^m\alpha^{r(u+1)+r^2n+\binom{r}{2}}_i$.
\end{lemma}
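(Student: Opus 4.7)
The plan is to handle the two claims separately, each by a weight/valuation argument that tracks the operator $\Psi$ through the polynomial $\hat P_{n,u}$.

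For the homogeneity assertion, I would assign weight $1$ to every variable $\alpha_j$ and also to every variable $t_{i,s}$. Inspecting the definition, $\hat P_{n,u}$ is then manifestly homogeneous in $(t,\alpha)$: each of the $rm$ inner factors $t_{i,s}^u\prod_{j}(t_{i,s}-\alpha_j)^{rn}$ is homogeneous of weight $u+rnm$, and the Vandermonde-type product $\prod_{(i_1,s_1)<(i_2,s_2)}(t_{i_2,s_2}-t_{i_1,s_1})$ is homogeneous of weight $\binom{rm}{2}$. The key observation is that $\tilde\psi_{\alpha_i,i,s}$ sends $t_{i,s}^k$ to $\alpha_i^k$ multiplied by a scalar independent of all $\alpha_j$ (by \eqref{defvarphi}); so it converts $t_{i,s}$-weight into $\alpha_i$-weight without altering the total weight. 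Composing the $rm$ maps $\tilde\psi_{\alpha_i,i,s}$ into $\Psi$ thus produces a polynomial $C_{u,m}\in K[\alpha_1,\ldots,\alpha_m]$ of total weight equal to that of $\hat P_{n,u}$, and a short combinatorial rewriting identifies this integer with the expression stated in the lemma.

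For the divisibility claim, my strategy is to show that $C_{u,m}$, viewed as a polynomial in $\alpha_i$ with the other $\alpha_j$ frozen, vanishes at $\alpha_i=0$ together with its first $N-1$ derivatives, where $N=r(u+1)+r^2n+\binom{r}{2}$. The essential fact, visible from \eqref{defvarphi}, is that $\tilde\psi_{0,i,s}$ retains only the constant coefficient of its argument (because $0^k=0$ for $k\geq 1$), so it annihilates any polynomial of strictly positive $t_{i,s}$-valuation at $0$. When $\alpha_i$ is set to $0$ inside $\hat P_{n,u}$, each factor $(t_{i',s'}-\alpha_i)^{rn}$ becomes $t_{i',s'}^{rn}$ and combines with the $t_{i',s'}^u$ present to produce a factor $t_{i',s'}^{u+rn}$ in each of the $rm$ inner pieces. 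Restricting attention to the $r$ indices $(i,s')_{0\le s'\le r-1}$ and collecting the further powers of $t_{i,s'}$ furnished by the Vandermonde factors $(t_{i,s'}-t_{j,s''})$ (for $(j,s'')\neq (i,s')$) yields enough $t_{i,s'}$-valuation to kill, via $\tilde\psi_{0,i,s'}$, not only $C_{u,m}|_{\alpha_i=0}$ but also its first $N-1$ derivatives in $\alpha_i$.

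The main obstacle will be the bookkeeping: differentiating $C_{u,m}=\Psi(\hat P_{n,u})$ with respect to $\alpha_i$ is delicate because $\alpha_i$ enters both the polynomial $\hat P_{n,u}$ (through the $rm$ factors $(t_{i',s'}-\alpha_i)^{rn}$) and the operator $\tilde\psi_{\alpha_i,i,s}$ itself (through the evaluation point $\alpha_i$). Each application of $\partial_{\alpha_i}$ on $\hat P_{n,u}$ removes one unit from the $t_{i',s'}$-valuation at $0$ of exactly one of these factors, while derivatives acting on the $\tilde\psi_{\alpha_i,i,s}$ side must be expanded using the explicit $\alpha_i^k$-dependence from \eqref{defvarphi}. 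The counting must show that the cumulative $t_{i,s'}$-valuation at $0$ remains strictly positive in each of the $r$ variables $t_{i,0},\ldots,t_{i,r-1}$ through the first $N-1$ derivations, and the numerology matches exactly the claimed exponent $N$; organising this Leibniz bookkeeping tightly is the technical heart of the lemma.
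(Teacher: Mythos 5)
Your homogeneity argument matches the paper's: assign weight $1$ to every $\alpha_j$ and every $t_{i,s}$, note that $\hat P_{n,u}$ is homogeneous, and that each $\tilde\psi_{\alpha_i,i,s}$ exchanges $t_{i,s}$-weight for $\alpha_i$-weight term-by-term. That part is fine.

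For the divisibility claim, however, you have taken a genuinely different and substantially harder route, and you have not completed it. The paper does not differentiate at $\alpha_i=0$; it observes (cf.\ the proof of Lemma~\ref{homo} and the remark accompanying Lemma~\ref{prelimi}~$(ii)$) that $\tilde\psi_{\alpha,i,s}$ is compatible with the rescaling $t_{i,s}\mapsto\alpha\, t_{i,s}$: one has the operator identity that replaces $\tilde\psi_{\alpha,i,s}$ acting on $B(t_{i,s})$ by (a power of $\alpha$ times) $\tilde\psi_{1,i,s}$ acting on $B(\alpha t_{i,s})$. Composing over all $(i,s)$ converts $C_{u,m}=\Psi_{\underline\alpha}(\hat P_u)$ into $\Psi_{\boldsymbol 1}$ applied to $\hat P_u(\alpha_i t_{i,s})$; the rescaled $\hat P_u$ then \emph{visibly} factors as $\prod_i\alpha_i^{ru+r^2n+\binom{r}{2}}$ times a genuine polynomial $Q_u(\boldsymbol t,\boldsymbol\alpha)$ because the factor $(t_{i,s}-\alpha_i)^{rn}$ becomes $\alpha_i^{rn}(t_{i,s}-1)^{rn}$, the factor $t_{i,s}^u$ becomes $\alpha_i^u t_{i,s}^u$, and the within-$i$ Vandermonde block $\prod_{s_1<s_2}(t_{i,s_2}-t_{i,s_1})$ contributes $\alpha_i^{\binom{r}{2}}$. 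The divisibility is then immediate by linearity of $\Psi_{\boldsymbol 1}$. This is a one-step extraction, with no Leibniz bookkeeping and no derivatives.

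Your replacement argument — showing that the first $N-1$ derivatives of $C_{u,m}$ in $\alpha_i$ vanish at $\alpha_i=0$ — has two concrete problems as sketched. First, you claim the Vandermonde factors $(t_{i,s'}-t_{j,s''})$ with $(j,s'')\neq (i,s')$ ``furnish further powers of $t_{i,s'}$''; but each such linear factor with $j\neq i$ has a nonzero constant term $-t_{j,s''}$ in $t_{i,s'}$ and therefore has $t_{i,s'}$-valuation exactly $0$, so these do \emph{not} raise the valuation you need to kill. Only the within-$i$ block $\prod_{0\le s_1<s_2\le r-1}(t_{i,s_2}-t_{i,s_1})$ contributes, and it does so in a delicate, antisymmetric way across the $r$ variables $t_{i,0},\ldots,t_{i,r-1}$ rather than variable-by-variable. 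Second, since $\alpha_i$ sits inside the operator $\tilde\psi_{\alpha_i,i,s}$, each $\partial_{\alpha_i}$ must be expanded through the commutation rule of Lemma~\ref{prelimi}~$(iii)$, which introduces the singular prefactor $1/\alpha_i$; justifying that these apparent singularities cancel at $\alpha_i=0$, and that the net vanishing order after $k$ such expansions is still at least $N-k$, is exactly the ``bookkeeping'' you defer, and nothing in your sketch pins it down. So there is a real gap: the method could in principle succeed, but as written it asserts the key valuation count without establishing it, and one of the valuation sources it invokes is spurious. The scaling identity bypasses all of this.
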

\begin{proof} 
First the polynomial $\hat{P}_{u}(\boldsymbol{t})$ is a homogeneous polynomial with respect to the variables $\alpha_i,t_{i,s}$ of degree $m[ru+r^2n+{r\choose 2}]+{m\choose 2}(2n+1)r^2$.
By the definition of $\Psi$, it is easy to see that $C_{u,m}=\Psi(\hat{P}_{u}(\boldsymbol{t}))$ is a homogeneous polynomial with respect to the variables $\alpha_i$ of degree $m[r(u+1)+r^2n+{r\choose 2}]+{m\choose 2}(2n+1)r^2$.
Second we show the later assertion. By linear algebra ${{\tilde{\psi}}}_{\alpha,i,s}(B(t_{i,s}))=\alpha {{\tilde{\psi}}}_{1,i,s}(B(\alpha t_{i,s}))$ ({\it  i.e.}  the variable $t$ specializes in $1$, {\it confer} Lemma~\ref{prelimi} $(ii)$ below) for any integer $\ell$ and any polynomial $B(t_{i,s})\in K[t_{i,s}]$. 
So, by composition, the same holds for $\Psi$, and, putting $\boldsymbol{1}=(1,\ldots,1)$, {one gets}
$$C_{u,m}=\prod_{i=1}^m\alpha^r_i\cdot \bigcirc_{i=1}^m\left(\Psi_{\boldsymbol{1}}(\hat{P}_{u}(\alpha_i t_{i,s}))\right)\enspace.$$
We now compute
$$
\hat{P}_{u}(\alpha_i t_{i,s})=\prod_{i=1}^m\alpha_i^{r^2n+ru+{r\choose 2}}\cdot  Q_u({\boldsymbol{t}})\enspace,
$$
where 
\begin{align*}
Q_u(\boldsymbol{t})=Q_{n,u,m}(\boldsymbol{t})&=\left( \prod_{i=1}^m \prod_{s=0}^{r-1} \left[t_{i,s}^u\prod_{j\neq i}(\alpha_it_{i,s}-\alpha_j)^{rn}(t_{i,s}-1)^{rn}\right]\right)\\
& \cdot \displaystyle{\prod_{1\le i_1<i_2\le m}\prod_{0 \le s_1,s_2\le r-1}}(\alpha_{i_2}t_{i_2,s_2}-\alpha_{i_1}t_{i_1,s_1})\cdot \displaystyle{\prod_{i=1}^m\prod_{0\le s_1<s_2\le r-1}}(t_{i,s_2}-t_{i,s_1})\enspace,
\end{align*}
by linearity, we obtain
\begin{equation} \label{equality 1} 
C_{u,m}=\prod_{i=1}^m\alpha_i^{r(u+1)+r^2n+{r\choose 2}}\left(\Psi_{\boldsymbol{1}}\left(Q_u\right)\right)\enspace.
\end{equation}
This concludes the proof of the lemma. 
\end{proof}

{Now we consider $({{c}})$. Since the statement is trivial for $m=1$, we can assume $m\geq 2$. We need to show that $(\alpha_j-\alpha_i)^{(2n+1)r^2}$ divides $C_{u,m}$. Without loss of generality, after renumbering, we can assume that $j=2,i=1$.
To ease notations, we are going to take advantage of the fact that $m\geq2$, and set $X_s=t_{1,s}, Y_s=t_{2,s}$, $\alpha_1=\alpha$, $\alpha_2=\beta$ and ${{\tilde{\psi}}}_{\alpha_1,1,s}={{\tilde{\psi}}}_{\alpha,s},{{\tilde{\psi}}}_{\alpha_2,1,s}={{\tilde{\psi}}}_{\beta,s}$.
So our polynomial ${\hat{P}}_u$ rewrites as
\begin{align*}
{\hat{P}}_{u}(\underline{X},\underline{Y})&=\prod_{s=0}^{r-1}\left[(X_sY_s)^u[(X_s-\alpha)(X_s-\beta)(Y_s-\alpha)(Y_s-\beta)]^{rn}\right]\\
& \cdot \prod_{{{0}}\leq i<j\leq {{r-1}}}(X_j-X_i)\prod_{{{0}}\leq i<j\leq {{r-1}}}(Y_j-Y_i)\prod_{{{0}}\leq i,j\leq {{r-1}}}(Y_j-X_i)
\\ & \cdot c(t_{i,s})_{i\geq 3}\prod_{k\geq 3}\prod_{s=0}^{r-1}\prod_{{{0}}\leq i,j \leq {{r-1}}}(t_{k,s}-X_i)(t_{k,s}-Y_j)\enspace,
\end{align*}
where $c(t_{i,s}):=\prod_{i=3}^{m}\prod_{s=0}^{r-1}\left[ t_{i,s}^u\prod_{j=1}^m(t_{i,s}-\alpha_j)^{rn}\right]\prod_{(i_1,s_1)<(i_2,s_2),i_1,i_2\geq 3}(t_{i_2,s_2}-t_{i_1,s_1})$ (the precise value of $c$ does not actually matter as it is treated as a scalar by the operators ${{\tilde{\psi}}}_{\alpha,s},{{\tilde{\psi}}}_{\beta,s}$).

We set $\Psi_{\alpha}=\bigcirc_{s=0}^{r-1}{{\tilde{\psi}}}_{\alpha,s}$ and 
$\Psi_{\beta}=\bigcirc_{s=0}^{r-1}{{\tilde{\psi}}}_{\beta,s}$ respectively. 
One has 
$\Psi=\Psi_{\alpha}\circ\Psi_{\beta}\circ{{\underline{{{\tilde{\psi}}}}}}$
where ${{\underline{{{\tilde{\psi}}}}}}=\bigcirc_{i\geq 3}\bigcirc_{s=0}^{r-1}{{\tilde{\psi}}}_{\alpha_i,i,s}$. 

\begin{lemma}\label{prelimi}
\begin{itemize}
\item[$(i)$] The morphisms ${{\tilde{\psi}}}_{\alpha,s_1},{{\tilde{\psi}}}_{\beta,s_2}$ pairwise commute for $0\leq s_1,s_2\leq r-1$.
\item[$(ii)$] The operator $\frac{\partial}{\partial \alpha}$ commutes with any ${{\tilde{\psi}}}_{\beta,s}$ and hence with $\Psi_{\beta}$ and with ${{\underline{{{\tilde{\psi}}}}}}$.
\item[$(iii)$] We have 
$$\dfrac{\partial}{\partial\alpha}\Psi_{\alpha}=\Psi_{\alpha}\dfrac{\partial}{\partial \alpha}+\dfrac{1}{\alpha}\sum_{s=0}^{r-1}\Psi_{\alpha}\circ \theta_{X_s}\enspace.$$
\end{itemize}
\end{lemma}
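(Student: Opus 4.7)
Parts (i) and (ii) rest on the observation that the operators involved act on disjoint sets of variables. For (i), $\tilde\psi_{\alpha,s_1}$ involves only the variable $X_{s_1}=t_{1,s_1}$ (with scalars depending on $\alpha$ and the $\gamma_\bullet$), while $\tilde\psi_{\beta,s_2}$ involves only $Y_{s_2}=t_{2,s_2}$ (with scalars depending on $\beta$ and the $\gamma_\bullet$). Expanding any polynomial in a basis of monomials in $X_{s_1}$ and $Y_{s_2}$ with coefficients in the remaining variables, both morphisms act by multiplication by a scalar that depends only on the relevant exponent, so the order of application is immaterial. For (ii), $\tilde\psi_{\beta,s}$ has no $\alpha$-dependence at all and therefore commutes with $\partial/\partial\alpha$ monomial by monomial; commutation with $\Psi_{\beta}$ and $\underline{\tilde\psi}$ follows immediately by composition.

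For (iii), the core computation is the single-factor identity
\[
\frac{\partial}{\partial\alpha}\circ\tilde\psi_{\alpha,s}
\;=\;\tilde\psi_{\alpha,s}\circ\frac{\partial}{\partial\alpha}
+\frac{1}{\alpha}\,\tilde\psi_{\alpha,s}\circ\theta_{X_s},
\]
which by $K$-linearity suffices to check on $P=X_s^{k}f(\alpha)$, where $f$ represents an element of the remaining variables possibly depending on $\alpha$. Definition~\eqref{defvarphi} gives $\tilde\psi_{\alpha,s}(P)=\alpha^{k}f(\alpha)\big/\prod_{w=0}^{s}(k+\gamma_{r-s+w})$; differentiating in $\alpha$ splits into the contribution $k\alpha^{k-1}f(\alpha)/\prod=\alpha^{-1}\tilde\psi_{\alpha,s}(\theta_{X_s}P)$ arising from the factor $\alpha^{k}$ and $\alpha^{k}f'(\alpha)/\prod=\tilde\psi_{\alpha,s}(\partial P/\partial\alpha)$ from $f(\alpha)$, matching the two terms on the right.

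The full formula for $\Psi_\alpha=\tilde\psi_{\alpha,0}\circ\cdots\circ\tilde\psi_{\alpha,r-1}$ (the ordering being immaterial by (i)) then follows by induction on the number of factors. At each step one applies the single-factor identity to the outermost $\tilde\psi_{\alpha,s}$; the resulting correction $\tfrac{1}{\alpha}\tilde\psi_{\alpha,s}\circ\theta_{X_s}$ can be commuted past the remaining $\tilde\psi_{\alpha,s'}$ because $\theta_{X_s}$ acts only on $X_s$ while the other operators act on distinct variables. Collecting the $r$ correction terms produced in this way reproduces $\tfrac{1}{\alpha}\sum_{s=0}^{r-1}\Psi_\alpha\circ\theta_{X_s}$, which is the announced formula.

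The main obstacle, modest but genuine, lies in (iii) and consists in cleanly separating the two sources of $\alpha$-dependence in $\Psi_\alpha(P)$: the dependence already present in $P$ and the dependence introduced by successively specializing each $X_s$ to $\alpha$ inside $\Psi_\alpha$. Once this is handled in the single-factor case, both the induction in (iii) and the disjoint-variable commutations in (i)--(ii) reduce to routine verifications.
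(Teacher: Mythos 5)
Your proof is correct, and the arguments for (i) and (ii) are essentially the same disjoint-variables observations the authors make. For (iii) you take a slightly different organizational route: you first establish a single-factor commutator identity
$\frac{\partial}{\partial\alpha}\circ\tilde\psi_{\alpha,s}=\tilde\psi_{\alpha,s}\circ\frac{\partial}{\partial\alpha}+\frac{1}{\alpha}\tilde\psi_{\alpha,s}\circ\theta_{X_s}$
and then assemble $\Psi_\alpha$ by induction, commuting the correction $\theta_{X_s}$ through the remaining factors because they act on disjoint variables. The paper instead computes $\frac{\partial}{\partial\alpha}\Psi_\alpha(h)$ in one shot by expanding $h=\sum_{\underline{i}}a_{\underline{i}}(\alpha)\prod_s X_s^{i_s}$ and applying the product rule to $a_{\underline{i}}(\alpha)\alpha^{|\underline{i}|}$, so that the two terms on the right appear directly without induction. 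The underlying computation is identical — both are the Leibniz rule applied to the $\alpha^k$ produced by specialization versus the $\alpha$-dependence already in the coefficient — so the difference is purely presentational; your version isolates the one-variable mechanism more explicitly, while the paper's is more compact. One small nit: when you say "the ordering being immaterial by (i)," note that (i) as stated concerns $\tilde\psi_{\alpha,s_1}$ versus $\tilde\psi_{\beta,s_2}$; the analogous commutation of $\tilde\psi_{\alpha,s_1}$ with $\tilde\psi_{\alpha,s_2}$ for $s_1\neq s_2$ is what you actually use, and it follows from the same disjoint-variables argument, but is not literally item (i).
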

\begin{proof} The assertion $(i)$ follows from the definition since both multiplication by a scalar, specialization of one variable or integration with respect to a given variable all pairwise commute, $(ii)$ follows from commutation of integrals with respect to a parameter with differentiation with respect to that parameter.
 
Finally, we prove $(iii)$.  If ${{h}}(X_0,\ldots,X_{r-1})\in K[\alpha,1/\alpha][X_0,\ldots, X_{r-1}]$, we write 
${{h}}(X_0,\ldots, X_{r-1})=\sum_{\underline{i}}a_{\underline{i}}(\alpha)\prod_{s=0}^{r-1}X_{s}^{i_s}$. 
By definition, we have
$$\Psi_{\alpha}({{h}})=\sum_{\underline{i}}a_{\underline{i}}(\alpha)\frac{\alpha^{\vert \underline{i}\vert}}{\prod_{s=0}^{r-1}(i_s+\gamma_{r-s})\cdots (i_s+\gamma_r)}\enspace,$$ where $\vert \underline{i}\vert=\sum_{s=0}^{r-1}i_s$. 
Then $$\frac{\partial}{\partial \alpha}(\Psi_{\alpha}({{h}}))=\sum_{\underline{i}}\frac{\partial}{\partial \alpha}(a_{\underline{i}}(\alpha))\frac{\alpha^{\vert \underline{i}\vert}}{\prod_{s=0}^{r-1}(i_s+\gamma_{r-s})\cdots (i_s+\gamma_r)}
+\sum_{\underline{i}}\dfrac{a_{\underline{i}}(\alpha)}{\alpha}\vert \underline{i}\vert\frac{\alpha^{\vert\underline{i}\vert}}{\prod_{s=0}^{r-1}(i_s+\gamma_{r-s})\cdots (i_s+\gamma_r)}\enspace.$$

First term in the sum is easily seen to be equal to $\Psi_{\alpha}\circ \frac{\partial}{\partial \alpha}({{h}})$.  
So the claim $(iii)$ reduces to the statement
$$\sum_{s=0}^{r-1}\Psi_{\alpha}\circ \theta_{X_s}(X^{i_0}_0\cdots X^{i_{r-1}}_{r-1})=
\vert \underline{i}\vert\frac{\alpha^{\vert\underline{i}\vert}}{\prod_{s=0}^{r-1}(i_s+\gamma_{r-s})\cdots (i_s+\gamma_r)}\enspace.$$

But left hand side is 
\begin{align*}
\sum_{s=0}^{r-1}\Psi_{\alpha}\circ\theta_{X_s}(X^{i_0}_0\cdots X^{i_{r-1}}_{r-1})=
\sum_{s=0}^{r-1}\frac{i_s\alpha^{\vert \underline i\vert}} {\prod_{s'=0}^{r-1}(i_{s'}+\gamma_{r-s'})\cdots (i_{s'}+\gamma_r)}
=\vert \underline{i}\vert\frac{\alpha^{\vert\underline{i}\vert}}{\prod_{s'=0}^{r-1}(i_{s'}+\gamma_{r-s'})\cdots (i_{s'}+\gamma_r)}\enspace.
\end{align*} This completes the proof of this lemma.
\end{proof}

\vspace{\baselineskip}
{{We introduce a specialization morphism for the variable $\alpha$. Set 
$$\Delta=\Delta_{\alpha}: \Q[\alpha,1/\alpha, \beta, \alpha_2,\ldots,\alpha_m] \longrightarrow \qu(\alpha); \ \ \Delta(P(\alpha,\beta,\alpha_2,\ldots,\alpha_m))=P(\alpha,\alpha,\alpha_2,\ldots,\alpha_m)\enspace.$$}}
Note that ${{\underline{{{\tilde{\psi}}}}}}$ and ${{\Psi_{\alpha}}}$ commute so, it is enough to prove that  
$$\frac{\partial^{\ell}}{\partial \alpha^{\ell}}\Delta_{\alpha} \left(\Psi_{\alpha}\circ\Psi_{\beta}({\hat{P}}_u)\right)=0  \ \ \text{for} \  0\le \ell \le (2n+1)r^2-1\enspace.$$
We postpone the end of the proof of $({{c}})$ and start with a few preliminaries.
We  set 
\begin{align} 
f_{n,u}(\alpha,\beta,\underline{X},\underline{Y})=f(\alpha,\beta,\underline{X},\underline{Y}) &=\displaystyle c(t_{i,s})\prod_{k\geq 3}\prod_{s{{=0}}}^{{r-1}}\prod_{1\leq i,j \leq r}(t_{k,s}-X_i)(t_{k,s}-Y_j) \label{f} \\
& \cdot \displaystyle \prod_{s={{0}}}^{{r-1}}(X_sY_s)^u[(X_s-\alpha)(X_s-\beta)(Y_s-\alpha)(Y_s-\beta)]^{rn}\enspace, \nonumber
\end{align}
and 
\begin{align} \label{g}
g(\underline{X},\underline{Y})=g(\alpha,\beta,\underline{X},\underline{Y})=\prod_{{{0}}\leq i,j\leq {{r-1}}}(X_i-Y_j)\prod_{{{0}}\leq i<j\leq {{r-1}}}[(X_j-X_i)(Y_j-Y_i)]\enspace.
\end{align}
So that ${\hat{P}}_{u}={\hat{P}}=fg$ (for the rest of the proof, the index $u$ will not play any role and may be conveniently left off to ease reading).

We now concentrate on a few elementary properties of the maps ${{\tilde{\psi}}}$ which we regroup here and will be useful for the rest~:

Now we prepare new notations. 
Let $\boldsymbol{\xi}_{s}:=(\xi_{s,k})_{k\ge 1}$ for $0\le s \le r-1$ be infinite sequences of elements of $K$. 
Put $\boldsymbol{\Xi}=(\boldsymbol{\xi}_{s})_{0\le s \le r-1}$.
For $\boldsymbol{\ell}:=(\ell_0,\ldots, \ell_{r-1})\in \Z^r$ {{with $\ell_i\ge0$}}, we put 
\begin{align*}
&{{\tilde{\psi}}}_{\alpha,s,\boldsymbol{\xi}_s,\ell_s}={{\tilde{\psi}}}_{\alpha,s}\bigcirc_{w=1}^{\ell_s}(\theta_{X_s}+\xi_{s,w}) \ \text{for} \ 0\le s \le r-1\enspace,\\
&\Psi_{\alpha,\boldsymbol{\Xi}_{\boldsymbol{\ell}}}=\bigcirc_{s=0}^{r-1}{{\tilde{\psi}}}_{\alpha,s,\boldsymbol{\xi}_s,\ell_s}\enspace,
\end{align*}
where $\bigcirc_{{{w}}=1}^{\ell_s}(\theta_{X_s}+\xi_{{s,w}})={\rm{id}}_{K[t]}$ if $\ell_s=0$.
We remark that, in the case of $\boldsymbol{\ell}=(0,\ldots,0)\in \Z^r$, we have $\Psi_{\alpha,\boldsymbol{\Xi}_{\boldsymbol{\ell}}}=\Psi_{\alpha}$ for any $\boldsymbol{\Xi}$.
\begin{lemma}\label{symetries}
Let $\boldsymbol{\xi}_{s}:=(\xi_{s,k})_{k\ge 1}$ and $\boldsymbol{\xi}'_{s}:=(\xi'_{s,k})_{k\ge 1}$ for $0\le s \le r-1$ be infinite sequences of elements of $K$
and $\boldsymbol{\ell}:=(\ell_0,\ldots,\ell_{r-1}),\boldsymbol{\ell}':=(\ell'_0,\ldots,\ell'_{r-1})\in \Z^r$ {{with $\ell_i,\ell'_j\ge0$}}.
Put $\boldsymbol{\Xi}:=(\boldsymbol{\xi}_s)_{s}$, $\boldsymbol{\Xi}':=(\boldsymbol{\xi}'_s)_s$. 
Assume there exist $\ell_i,\ell'_j$ with 
\begin{align} \label{equality theta}
\bigcirc_{w=0}^i (\theta_t+\gamma_{r-i+w})^{-1} \circ (\theta_t+\xi_{i,1})\circ \cdots \circ (\theta_t+\xi_{i,\ell_i})=
\bigcirc_{w'=0}^j(\theta_t+\gamma_{r-j+w'})^{-1} \circ (\theta_t+\xi'_{j,1})\circ \cdots \circ (\theta_t+\xi'_{j,\ell'_j})\enspace,
\end{align}
and the polynomial $P\in K[\underline{X},\underline{Y}]$ is antisymmetric $($any odd permutation of the variables $X_i,Y_j$ changes $P$ in its opposite$)$. 
Then we have
$$\Delta\circ\Psi_{\alpha,\boldsymbol{\Xi}_{\boldsymbol{\ell}}}\circ\Psi_{\beta,\boldsymbol{\Xi}'_{\boldsymbol{\ell}'}}(P)=0\enspace.$$
Similarly, if there exist $\ell_i,\ell_j$ for $0\le i<j \le r-1$ with 
\begin{align} \label{equality theta 2} 
\bigcirc_{w=0}^i(\theta_t+\gamma_{r-i+w})^{-1} \circ (\theta_t+\xi_{i,1})\circ \cdots \circ (\theta_t+\xi_{i,\ell_i})=
\bigcirc_{w'=0}^j (\theta_t+\gamma_{r-j+w'})^{-1}\circ (\theta_t+\xi_{j,1})\circ \cdots \circ (\theta_t+\xi_{j,\ell_j})\enspace,
\end{align}
we have $$\Psi_{\alpha,\boldsymbol{\Xi}_{\boldsymbol{\ell}}}(P)=0\enspace.$$
\end{lemma}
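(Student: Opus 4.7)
The plan is a symmetrization argument: apply a (now-symmetrized) operator to an antisymmetric polynomial. Concretely, I first compute, from the definition $(\ref{defvarphi})$ of $\tilde{\psi}_{\alpha,s}$, the explicit action on monomials
$$\tilde{\psi}_{\alpha,s,\boldsymbol{\xi}_s,\ell_s}(X_s^k) \;=\; \alpha^k\cdot \frac{\prod_{w=1}^{\ell_s}(k+\xi_{s,w})}{\prod_{w=0}^{s}(k+\gamma_{r-s+w})}\qquad (k\ge 0).$$
Hypothesis $(\ref{equality theta})$ is an equality of $K$-linear operators on $K[t]$; evaluating both sides on the basis $t^k$ and using that two rational functions of $k$ which agree on all non-negative integers coincide, it is equivalent to the equality of rational functions
$$R(k)\;:=\;\frac{\prod_{w=1}^{\ell_i}(k+\xi_{i,w})}{\prod_{w=0}^{i}(k+\gamma_{r-i+w})}\;=\;\frac{\prod_{w=1}^{\ell'_j}(k+\xi'_{j,w})}{\prod_{w=0}^{j}(k+\gamma_{r-j+w})}.$$

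Expanding $P$ into monomials $X_0^{a_0}\cdots X_{r-1}^{a_{r-1}}Y_0^{b_0}\cdots Y_{r-1}^{b_{r-1}}$ and using multilinearity of the operators, I would write
$$\Psi_{\alpha,\boldsymbol{\Xi}_{\boldsymbol{\ell}}}\circ \Psi_{\beta,\boldsymbol{\Xi}'_{\boldsymbol{\ell}'}}(X^{\underline{a}}Y^{\underline{b}})\;=\;\prod_{s=0}^{r-1}\alpha^{a_s}R_{\alpha,s}(a_s)\cdot \prod_{t=0}^{r-1}\beta^{b_t}R_{\beta,t}(b_t),$$
where $R_{\alpha,s}$ (resp.\ $R_{\beta,t}$) denotes the rational function above attached to the $s$-th $X$-slot (resp.\ $t$-th $Y$-slot), so that $R_{\alpha,i}=R_{\beta,j}=R$ by assumption. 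I then apply $\Delta=\Delta_\alpha$ to specialize $\beta$ to $\alpha$: in the resulting scalar attached to $X^{\underline{a}}Y^{\underline{b}}$, all other factors are independent of $(a_i,b_j)$, while the $(i,j)$-contribution is $R(a_i)R(b_j)$, a quantity invariant under swapping $a_i\leftrightarrow b_j$. Consequently the linear form
$$T:P\longmapsto \Delta\bigl(\Psi_{\alpha,\boldsymbol{\Xi}_{\boldsymbol{\ell}}}\circ \Psi_{\beta,\boldsymbol{\Xi}'_{\boldsymbol{\ell}'}}(P)\bigr)$$
is invariant under the transposition $\tau$ that swaps the two variables $X_i$ and $Y_j$. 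Since $\tau$ is an odd permutation of $\{X_0,\ldots,X_{r-1},Y_0,\ldots,Y_{r-1}\}$, the antisymmetry of $P$ gives $\tau P=-P$, hence $T(P)=T(\tau P)=-T(P)$, so $T(P)=0$, proving the first claim.

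For the second assertion under $(\ref{equality theta 2})$, the same computation applies with $i,j$ now labelling two $X$-slots of the single operator $\Psi_{\alpha,\boldsymbol{\Xi}_{\boldsymbol{\ell}}}$. The transposition $X_i\leftrightarrow X_j$ again swaps $R(a_i)$ and $R(a_j)$ in an otherwise identical product, so that $\Psi_{\alpha,\boldsymbol{\Xi}_{\boldsymbol{\ell}}}(P)$ is invariant under this transposition; no specialization $\Delta$ is needed here. Combined with the antisymmetry $\tau P=-P$, this yields $\Psi_{\alpha,\boldsymbol{\Xi}_{\boldsymbol{\ell}}}(P)=0$. The only delicate point is the monomial bookkeeping: one must isolate, in the scalar obtained by $\tilde{\psi}$-composition, exactly one factor per exponent so that the symmetrization under $\tau$ is transparent. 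Once the closed form for $\tilde{\psi}_{\alpha,s,\boldsymbol{\xi}_s,\ell_s}(X_s^k)$ is rewritten in terms of the common rational function $R$, the rest reduces to the standard pairing of symmetric integrand with antisymmetric integrand.
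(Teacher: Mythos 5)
Your proof is correct and takes essentially the same approach as the paper: identify the transposition $\tau$ swapping the two relevant variables, verify that the (post-specialization, for the first claim) operator is $\tau$-invariant because the scalar multipliers on monomials coincide under hypothesis (\ref{equality theta}), and conclude $T(P)=T(\tau P)=-T(P)=0$ from antisymmetry. The paper's proof states the $\tau$-invariance tersely ("the second equality is obtained by the assumption"); you simply make the underlying monomial computation explicit, which is a faithful expansion rather than a different route.
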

\begin{proof} Let $0\leq i,j\leq r-1$. 
Let $\tau$ be the transposition $\tau(X_i)=Y_j$, $\tau(Y_j)=X_i$ leaving all the other variables invariant.
Then $\tau$ acts on $K[\underline{X},\underline{Y}]$ by permutation of the variables. Then we have $\tau(P)=-P$ by antisymmetry. 
We compute
\begin{align*} 
\Delta\circ\Psi_{\alpha,\boldsymbol{\Xi}_{\boldsymbol{\ell}}}\circ\Psi_{\beta,\boldsymbol{\Xi}'_{\boldsymbol{\ell}'}}(P)&=\Delta\bigcirc_{s=0}^{r-1}{{\tilde{\psi}}}_{\alpha,s,\boldsymbol{\xi}_s,\ell_s}\bigcirc_{s=0}^{r-1}{{\tilde{\psi}}}_{\beta,s,\boldsymbol{\xi}^{'}_s,\ell'_s}(P)\\
&=\Delta\bigcirc_{s=0}^{r-1}{{\tilde{\psi}}}_{\alpha,s,\boldsymbol{\xi}_s,\ell_s}\bigcirc_{s=0}^{r-1}{{\tilde{\psi}}}_{\beta,s,\boldsymbol{\xi}^{'}_s,\ell'_s}(\tau P)=
-\Delta\circ\Psi_{\alpha,\boldsymbol{\Xi}_{\boldsymbol{\ell}}}\circ \Psi_{\beta,\boldsymbol{\Xi}'_{\boldsymbol{\ell}'}}(P)\enspace.
\end{align*}
Note that the second equality is obtained by the assumption $(\ref{equality theta})$. Thus we obtain the first assertion.
The second statement is a variation of the same argument.
\end{proof} 
\begin{remark}
Later, in Lemma $\ref{condition}$, we use the first assertion of Lemma $\ref{symetries}$ only to the case of $\boldsymbol{\ell}'=(0,\ldots,0)$. 
Namely, we apply Lemma $\ref{symetries}$ to the case of $\Psi_{\beta,\boldsymbol{\Xi}'_{\boldsymbol{\ell}'}}=\Psi_{\beta}$.
\end{remark}

\begin{lemma}\label{racine} 
Let $P\in K[\underline{X},\underline{Y}]$ be a polynomial such that $(X_s-\alpha)^T\mid P$ for some $T\geq 1$ and $0\leq \ell\leq T-1$ an integer. 
Let $\xi_1,\ldots, \xi_{\ell}\in K$ $($if $\ell=0$, we mean $\{\xi_1,\ldots,\xi_{\ell}\}=\emptyset)$.
Then we have $${{\tilde{\psi}}}_{\alpha,s}\bigcirc_{w'=0}^s (\theta_{X_s}+\gamma_{r-s+w'})\bigcirc_{w=1}^{\ell} (\theta_{X_s}+\xi_w)(P)=0\enspace.$$ 
\end{lemma}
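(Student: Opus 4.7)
The plan is to reduce the problem to a standard order-of-vanishing argument by first simplifying the composite operator into an evaluation preceded by a low-order differential operator, then applying Taylor's formula at $X_s=\alpha$.

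First I would observe that all the operators $(\theta_{X_s}+\gamma_{r-s+w'})^{\pm 1}$ and $(\theta_{X_s}+\xi_w)$ are polynomials (or inverses of polynomials) in the single operator $\theta_{X_s}$, hence they pairwise commute. Unfolding the definition $\tilde{\psi}_{\alpha,s}=\mathrm{Eval}_{X_s\to\alpha}\bigcirc_{w=0}^{s}(\theta_{X_s}+\gamma_{r-s+w})^{-1}$, the composition
\[
\tilde{\psi}_{\alpha,s}\bigcirc_{w'=0}^{s}(\theta_{X_s}+\gamma_{r-s+w'})
=\mathrm{Eval}_{X_s\to\alpha}\circ\Bigl[\bigcirc_{w=0}^{s}(\theta_{X_s}+\gamma_{r-s+w})^{-1}\circ \bigcirc_{w'=0}^{s}(\theta_{X_s}+\gamma_{r-s+w'})\Bigr]
\]
collapses by commutation to $\mathrm{Eval}_{X_s\to\alpha}$. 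Hence the quantity to evaluate is
\[
\mathrm{Eval}_{X_s\to\alpha}\Bigl(\bigcirc_{w=1}^{\ell}(\theta_{X_s}+\xi_w)(P)\Bigr).
\]

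Next I would use that each $\theta_{X_s}+\xi_w=X_s\tfrac{d}{dX_s}+\xi_w$ is a linear differential operator of order at most $1$ in $\tfrac{d}{dX_s}$, so $D:=\bigcirc_{w=1}^{\ell}(\theta_{X_s}+\xi_w)$ is a differential operator in $\tfrac{d}{dX_s}$ of order at most $\ell$, with coefficients in $K[X_s]$. Writing $D=\sum_{k=0}^{\ell}a_k(X_s)\bigl(\tfrac{d}{dX_s}\bigr)^{k}$ and applying Leibniz's rule, the hypothesis $(X_s-\alpha)^T\mid P$ forces $(X_s-\alpha)^{T-k}\mid \bigl(\tfrac{d}{dX_s}\bigr)^{k}(P)$ for $0\le k\le \ell$. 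Since $\ell\le T-1$, we have $T-k\ge 1$ for every $k\le \ell$, and therefore $(X_s-\alpha)\mid D(P)$.

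Specializing $X_s=\alpha$ then gives $\mathrm{Eval}_{X_s\to\alpha}(D(P))=0$, which is precisely the desired vanishing. The only subtle point is the commutation/cancellation in the first step: this is purely formal once one notices that everything in sight is a polynomial (or formal inverse of a polynomial, well defined by the hypothesis on the $\gamma_{r-s+w}$) in the single operator $\theta_{X_s}$, so there is no obstacle. The rest is the standard fact that differentiating at most $T-1$ times a function with a zero of order $T$ leaves a function that still vanishes at that point.
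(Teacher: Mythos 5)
Your proof is correct and follows essentially the same route as the paper's: first the composition $\tilde{\psi}_{\alpha,s}\bigcirc_{w'=0}^{s}(\theta_{X_s}+\gamma_{r-s+w'})$ collapses to $\mathrm{Eval}_{X_s\to\alpha}$ by commutation of operators in $\theta_{X_s}$, and then the Leibniz rule together with $\ell\le T-1$ shows that $\bigcirc_{w=1}^{\ell}(\theta_{X_s}+\xi_w)(P)$ still lies in the ideal $(X_s-\alpha)$, so evaluating at $X_s=\alpha$ gives zero. You merely spell out the two steps in slightly more detail than the paper does.
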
 
\begin{proof} Indeed, writing $P=(X_i-\alpha)^TQ$, with $Q\in K[\underline{X},\underline{Y}]$, and noting that  
\begin{align*}
{{\tilde{\psi}}}_{\alpha,s}\bigcirc_{w'=0}^s (\theta_{X_s}+\gamma_{r-s+w'})\bigcirc_{w=1}^{\ell} (\theta_{X_s}+\xi_w)(P)=
{\rm{Eval}}_{{{X_s\rightarrow \alpha}}}\bigcirc_{w=1}^{\ell} (\theta_{X_s}+\xi_w)(P)\enspace.
\end{align*} 
By the Leibniz formula and the hypothesis $\ell\leq T-1$, $\bigcirc_{w=1}^{\ell} (\theta_{X_s}+\xi_w)(P)$ belongs to the ideal $(X_s-\alpha)$ and so 
$\ev_{{{X_s\rightarrow \alpha}}}\bigcirc_{w=1}^{\ell} (\theta_{X_s}+\xi_w)(P)=0$.
\end{proof}
\begin{lemma}\label{racinesymetrique} 
Let  $P\in K[\underline{X},\underline{Y}]$ be a polynomial such that $((X_s-\alpha)^{T_1}(X_s-\beta)^{T_2})\mid P$ for some non-negative integers $T_1,T_2$ with either $T_1$ or $T_2$ is greater than $1$ and $0\leq \ell\leq T_1+T_2-1$ an integer. 
Let $\xi_1,\ldots, \xi_{\ell}\in K$ $($if $\ell=0$, we mean $\{\xi_1,\ldots,\xi_{\ell}\}=\emptyset)$.
Then, we have $$\Delta\circ{{\tilde{\psi}}}_{\alpha,s}\bigcirc_{w'=0}^s (\theta_{X_s}+\gamma_{r-s+w'})\bigcirc_{w=1}^{\ell} (\theta_{X_s}+\xi_w)(P)=0\enspace.$$
\end{lemma}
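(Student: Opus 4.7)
The plan is to reduce Lemma~\ref{racinesymetrique} directly to Lemma~\ref{racine} via the specialization $\Delta\colon \beta\mapsto\alpha$. The key observation is that under $\Delta$, the divisibility $(X_s-\alpha)^{T_1}(X_s-\beta)^{T_2}\mid P$ becomes the strictly stronger divisibility $(X_s-\alpha)^{T_1+T_2}\mid \Delta P$, which together with the bound $\ell\le T_1+T_2-1$ fits exactly the hypotheses of Lemma~\ref{racine} with $T:=T_1+T_2$.

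First I would simplify the left-hand side exactly as in the proof of Lemma~\ref{racine}: the explicit factor $\bigcirc_{w'=0}^{s}(\theta_{X_s}+\gamma_{r-s+w'})$ cancels the inverse operators packaged inside ${{\tilde{\psi}}}_{\alpha,s}$ (see \eqref{defvarphi}), reducing the quantity under study to
\begin{equation*}
\Delta\circ{\rm{Eval}}_{X_s\rightarrow\alpha}\bigcirc_{w=1}^{\ell}(\theta_{X_s}+\xi_w)(P).
\end{equation*}

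Next I would push $\Delta$ all the way inside. Since $\Delta$ is the ring substitution sending $\beta$ to $\alpha$, it acts trivially on the variable $X_s$ and on the scalars $\xi_w,\gamma_v$. Hence it commutes with every first-order operator $\theta_{X_s}+\xi_w$, as well as with ${\rm{Eval}}_{X_s\rightarrow\alpha}$: all these maps are substitutions in pairwise disjoint families of variables, and each $\theta_{X_s}+\xi_w$ preserves polynomiality in $\alpha,\beta$. The displayed expression therefore rewrites as
\begin{equation*}
{\rm{Eval}}_{X_s\rightarrow\alpha}\bigcirc_{w=1}^{\ell}(\theta_{X_s}+\xi_w)(\Delta P).
\end{equation*}

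Writing $P=(X_s-\alpha)^{T_1}(X_s-\beta)^{T_2}Q$ with $Q\in K[\underline{X},\underline{Y}]$, one has $\Delta P=(X_s-\alpha)^{T_1+T_2}\Delta(Q)$, so $(X_s-\alpha)^{T_1+T_2}\mid \Delta P$. Reinserting the trivially cancelling $\bigcirc_{w'=0}^{s}(\theta_{X_s}+\gamma_{r-s+w'})$ back inside ${{\tilde{\psi}}}_{\alpha,s}$ and invoking Lemma~\ref{racine} on $\Delta P$ with $T:=T_1+T_2\ge 1$ immediately yields the asserted vanishing. I do not anticipate any real obstacle here: the only delicate point is the commutation of $\Delta$ with ${\rm{Eval}}_{X_s\rightarrow\alpha}$ and with the $\theta_{X_s}$-operators, which is formal since these act on pairwise disjoint families of variables.
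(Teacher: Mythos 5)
Your proof is correct and follows essentially the same route as the paper: specialize $\beta\mapsto\alpha$ to promote $(X_s-\alpha)^{T_1}(X_s-\beta)^{T_2}\mid P$ to $(X_s-\alpha)^{T_1+T_2}\mid \Delta P$, commute $\Delta$ past the $\theta_{X_s}$-operators and evaluation (they act on disjoint variables), and invoke Lemma~\ref{racine}. The paper states this more tersely but the argument is identical.
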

\begin{proof} 
This is a variation of the previous lemma, indeed, specialization at $\beta=\alpha$ doubles the multiplicity and commutation of specialization along $\beta$ commutes with 
${{\tilde{\psi}}}_{\alpha,s}\bigcirc_{w'=0}^s (\theta_{X_s}+\gamma_{r-s+w'})\bigcirc_{w=1}^{\ell} (\theta_{X_s}+\xi_w)$ (variation of Lemma~\ref{prelimi} $(i)$).
\end{proof}

Now, let us compute what comes out by iteration of property $(iii)$ of Lemma~\ref{prelimi}. 
We define infinite sequences of elements of $K$, $\boldsymbol{\xi}_s=(\xi_{s,k})_{k\ge 1}$, with
$$
\xi_{s,k}=\begin{cases}
\gamma_{r-s-1+k} & \ \text{if} \ 1\le k \le s+1\\
0 & \ \text{if} \ k>s+1\enspace,
\end{cases}
$$
and put $\boldsymbol{\Xi}=(\boldsymbol{\xi}_s)_{s=0,\ldots,r-1}$.
For a non-negative integer $\ell$, there exists a sequence $(b_{s,k,\ell})_{k=0,1,\ldots,\ell}\in K^{\ell+1}$ with $X^{\ell}=\sum_{k=0}^{\ell} b_{s,k,\ell}\prod_{w=1}^k(X+\xi_{s,w})$ where $\prod_{w=1}^k(X+\xi_{s,w})=1$ if $k=0$.

\

Let $\ell,k$ be {{non-negative integers}} with $\ell\ge k$. We define a set of differential operators 
$$\mathcal{X}_{\ell,k}=\{V=\partial_1\circ \cdots \circ \partial_{\ell}\mid \partial_i\in \{1/\alpha, \tfrac{\partial}{\partial \alpha}\}, \ \#\{1\le i \le \ell , \partial_i=1/\alpha\}=k\}\enspace.$$
One gets that 
\begin{align*}
\frac{\partial^{\ell}}{\partial \alpha^{\ell}}(\Psi_{\alpha}({\hat{P}}))&=\sum_{\substack{\boldsymbol{\ell}=(\ell_0,\ldots,\ell_{r-1})\in \Z^r \\ {{\ell_i\ge0}}, \ \vert \boldsymbol{\ell}\vert\leq \ell}}\sum_{V\in \mathcal{X}_{\ell,\vert \boldsymbol{\ell}\vert }}\Psi_{\alpha}\bigcirc_{s=0}^{r-1} \theta^{\ell_s}_{X_s}(V({\hat{P}}))
\\
&=\sum_{\substack{\boldsymbol{\ell}=(\ell_0,\ldots,\ell_{r-1})\in \Z^r \\ {{\ell_i\ge0}}, \ \vert \boldsymbol{\ell}\vert\leq \ell}}
\sum_{V\in \mathcal{X}_{\ell,\vert \boldsymbol{\ell}\vert }}\left(\sum_{k_0=0}^{\ell_0}\cdots\sum_{k_{r-1}=0}^{\ell_{r-1}}\prod_{s'=0}^{r-1}b_{s',k_{s'},\ell_{s'}}\Psi_{\alpha}\bigcirc_{s=1}^{r-1}\bigcirc_{u_s=1}^{k_s}(\theta_{X_s}+\xi_{s,u_s})(V({\hat{P}}))\right)\\
&=\sum_{\substack{\boldsymbol{\ell}=(\ell_0,\ldots,\ell_{r-1})\in \Z^r \\ {{\ell_i\ge0}}, \ \vert \boldsymbol{\ell}\vert\leq l}}\sum_{V\in \mathcal{X}_{\ell,\vert \boldsymbol{\ell}\vert }}
\left(\sum_{\substack{\boldsymbol{k}=(k_0,\ldots,k_{r-1})\le \boldsymbol{\ell} \\ k_i\ge 0}}\prod_{s'=0}^{r-1}b_{s',k_{s'},\ell_{s'}}\Psi_{\alpha,\boldsymbol{\Xi}_{\boldsymbol{k}}}(V({\hat{P}}))\right)\enspace,
\end{align*}
{{where $\boldsymbol{k}\le \boldsymbol{\ell}$ means $k_i\le l_i$ for each $0\le i \le r-1$.}}

By the Leibniz formula, for $V\in \mathcal{X}_{\ell,\vert \boldsymbol{\ell} \vert}$, $V({\hat{P}})$ is a linear combination (over $K[1/\alpha]$) of the derivatives 
$\frac{\partial^j}{\partial{\alpha^j}}({\hat{P}})$ for $0\leq j\leq \ell-\vert \boldsymbol{\ell}\vert$.
Since ${\hat{P}}=fg$ ({{recall the definition of $f$ and $g$ in \eqref{f} and \eqref{g} respectively}}), it is a linear combination of $g\frac{\partial^j}{\partial\alpha^j}(f)$, for $0\leq j\leq \ell-\vert\boldsymbol{\ell}\vert$.

\

We now perform the combinatorics argument~:
\begin{lemma} \label{condition}
We use the notations as above.
Let $\ell$ be a {{non-negative integer}} and $\boldsymbol{\ell}=(\ell_0,\ldots,\ell_{r-1})\in \Z^r$ {{with $\ell_i\ge0$}} such that $\vert \boldsymbol{\ell}\vert \leq \ell$.
Assume further either of these three to be true 
\begin{itemize}\label{condicombi}
\item[$(i)$]  There exist $0\le i \le r-1$ with $\ell_i<i+1$.

\item[$(ii)$] There exist $0\le i<j\le r-1$ with $\ell_i\ge i+1$, $\ell_j\ge j+1$ and $\ell_i-(i+1)=\ell_j-(j+1).$ 

\item[$(iii)$] There exists an index $0\leq s\leq r-1$ such that $0\leq \ell_s-(s+1)< 2rn-\ell+\vert\boldsymbol{\ell}\vert$.
\end{itemize}
Then, $\Delta \circ\Psi_{\alpha,\boldsymbol{\Xi}_{\boldsymbol{\ell}}}\circ \Psi_{\beta}(g\frac{\partial^jf}{\partial \alpha^j})=0$ for all $0\leq j\leq \ell-\vert\boldsymbol{\ell}\vert$.
\end{lemma}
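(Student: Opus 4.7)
The plan is to first reduce each operator ${{\tilde{\psi}}}_{\alpha,s,\boldsymbol{\xi}_s,\ell_s}$ to a clean canonical form, and then match the three cases against the preparatory Lemmas \ref{symetries} and \ref{racinesymetrique}. With $\xi_{s,w}=\gamma_{r-s-1+w}$ for $1\le w\le s+1$ and $\xi_{s,w}=0$ otherwise, the forward factors telescope against the inverse factors in ${{\tilde{\psi}}}_{\alpha,s}$, leaving two regimes. If $\ell_s\ge s+1$, then ${{\tilde{\psi}}}_{\alpha,s,\boldsymbol{\xi}_s,\ell_s}={\rm Eval}_{X_s\to\alpha}\circ\theta_{X_s}^{\ell_s-(s+1)}$; if $\ell_s\le s$, it reduces to ${\rm Eval}_{X_s\to\alpha}\bigcirc_{w=0}^{s-\ell_s}(\theta_{X_s}+\gamma_{r-(s-\ell_s)+w})^{-1}$, structurally identical to the $Y_j$-operator ${{\tilde{\psi}}}_{\beta,j}$ with $j=s-\ell_s$ (up to the identification $\beta\leftrightarrow\alpha$). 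I will also note that $g$ is nothing but the Vandermonde in the $2r$ variables $X_0,\ldots,X_{r-1},Y_0,\ldots,Y_{r-1}$, while $f$ is symmetric under any transposition of two of them (both $\prod_s (X_sY_s)^u$ and $\prod_s[(X_s-\alpha)(X_s-\beta)(Y_s-\alpha)(Y_s-\beta)]^{rn}$ have this property), so $g\tfrac{\partial^j f}{\partial \alpha^j}$ is antisymmetric under any such transposition of the $2r$ variables.

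Case (iii) then dissolves by a Leibniz expansion of $\tfrac{\partial^j f}{\partial \alpha^j}$: each summand is divisible by $(X_s-\alpha)^{T_1}(X_s-\beta)^{T_2}$ with $T_1+T_2\ge 2rn-j\ge 2rn-\ell+|\boldsymbol{\ell}|$, which strictly exceeds $\ell_s-(s+1)$ by hypothesis. Lemma \ref{racinesymetrique} applied termwise (the remaining operators in $\Psi_{\alpha,\boldsymbol{\Xi}_{\boldsymbol{\ell}}}\circ\Psi_\beta$ act on variables disjoint from $X_s$ and preserve this divisibility) then kills each term. For case (ii), the normalised operators on $X_i$ and $X_j$ become the same pure power of $\theta$, namely $\theta^{\ell_i-(i+1)}=\theta^{\ell_j-(j+1)}$, so equation \eqref{equality theta 2} holds; antisymmetry of $\Psi_\beta(g\tfrac{\partial^j f}{\partial \alpha^j})$ in $X_i,X_j$ together with the second part of Lemma \ref{symetries} deliver the vanishing.

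Case (i) is the most subtle. Set $j=i-\ell_i\in\{0,\ldots,r-1\}$; by the normalisation the operator on $X_i$ is identical to the operator on $Y_j$ appearing in $\Psi_\beta$ (for which $\boldsymbol{\ell}'=(0,\ldots,0)$), so equation \eqref{equality theta} is satisfied. The first part of Lemma \ref{symetries} then yields $\Delta\circ\Psi_{\alpha,\boldsymbol{\Xi}_{\boldsymbol{\ell}}}\circ\Psi_\beta(g\tfrac{\partial^j f}{\partial \alpha^j})=0$, provided this polynomial is antisymmetric under $X_i\leftrightarrow Y_j$, which is precisely the Vandermonde observation above. I expect this case to be the main obstacle: one must spot that the inverse-factor operator on $X_i$, which contains no $\beta$ at all, can nevertheless be recognised as the structural match of a $Y_j$-operator only after the telescoping, and that the necessary antisymmetry must come from an $X\leftrightarrow Y$ swap rather than a within-$X$ swap, i.e.\ from the full Vandermonde nature of $g$ in all $2r$ variables simultaneously.
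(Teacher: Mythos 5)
Your proof is correct and follows essentially the same route as the paper: you telescope the forward factors $(\theta+\xi_{s,w})$ against the inverse factors in $\tilde\psi_{\alpha,s}$ to obtain the canonical forms, match case (i) against \eqref{equality theta} (with $\boldsymbol{\ell}'=\boldsymbol{0}$ and $j=i-\ell_i$), case (ii) against \eqref{equality theta 2}, and case (iii) against Lemma~\ref{racinesymetrique} via the $(X_s-\alpha)^{rn}(X_s-\beta)^{rn}$ divisibility of $f$, in all cases using that $g$ is (up to sign) the full Vandermonde and $f$ is symmetric in the $2r$ variables so that $g\,\partial^j_\alpha f$ is antisymmetric. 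This is precisely the paper's argument, merely spelled out in a bit more detail.
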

\begin{proof} If the first condition is satisfied, we have
$$(\theta_t+\gamma_{r-i})^{-1}\circ \cdots \circ (\theta_t+\gamma_r)^{-1}\circ (\theta_t+\xi_{i,1})\circ \cdots \circ (\theta_t+\xi_{i,\ell_i})=
(\theta_t+\gamma_{r-i+\ell_i})^{-1}\circ \cdots \circ (\theta_t+\gamma_r)^{-1}\enspace.$$
Thus, by antisymmetry of $g$, the first assertion of Lemma~\ref{symetries} ensures vanishing.

If the second conditions are satisfied, we have 
\begin{align*}
\theta^{\ell_i-i-1}_t&=\bigcirc_{\ell=0}^{i}(\theta_t+\gamma_{r-i+\ell})^{-1}\circ(\theta_t+\xi_{i,1})\circ \cdots \circ (\theta_t+\xi_{i,\ell_i})\\
&=\bigcirc_{\ell=0}^{j}(\theta_t+\gamma_{r-j+\ell})^{-1}\circ(\theta_t+\xi_{j,1})\circ \cdots \circ (\theta_t+\xi_{j,\ell_j})=\theta^{\ell_j-j-1}_t\enspace.
\end{align*}
By antisymmetry of $g$, the second assertion of Lemma~\ref{symetries} ensures vanishing. 

If the third condition is satisfied, Lemma~\ref{racinesymetrique} ensures that 
\begin{align*}
\Delta \circ{{\tilde{\psi}}}_{\alpha,s}\bigcirc_{w=1}^{k_s}(\theta_{X_s}+\xi_{s,w})(V({\hat{P}}))=
\Delta \circ{{\tilde{\psi}}}_{\alpha,s}\bigcirc_{w=1}^{s+1}(\theta_{X_s}+\gamma_{r-s+w-1})\circ \theta_{X_s}^{k_s-s-1}(V({\hat{P}}))\enspace,
\end{align*} 
itself vanishes for all $V\in \mathcal{X}_{\ell,\vert \boldsymbol{\ell} \vert}$ since $\left[(X_s-\alpha)(X_s-\beta)\right]^{rn}\mid f$ (so $\frac{\partial^j}{\partial \alpha^j}({\hat{P}})$ vanishes at $\alpha=\beta$ at order at least 
$2rn-j\geq 2rn-\ell+\vert\boldsymbol{\ell}\vert>\ell_s-(s+1)$).
\end{proof}

\begin{lemma} \label{combi}
The smallest integer $\ell$ for which there exists $\boldsymbol{\ell}=(\ell_0,\ldots,\ell_{r-1})$ with $\vert\boldsymbol{\ell}\vert\leq l$ with none of the conditions of Lemma $\ref{condicombi}$ are satisfied is $(2n+1)r^2$. 
\end{lemma}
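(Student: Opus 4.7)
The plan is to reformulate the three conditions in terms of the shifted integer variables $m_s := \ell_s - (s+1)$ for $0\le s\le r-1$, and then derive the sharp bound $(2n+1)r^2$ by a counting argument on a minimal selection of $r$ distinct non-negative integers. The simultaneous failure of the three conditions translates cleanly to: each $m_s\ge 0$ (negation of (i)); the $m_s$ are pairwise distinct (negation of (ii) combined with the previous); and $m_s\ge M$ for all $s$, where $M:=2rn-\ell+|\boldsymbol{\ell}|$ (negation of (iii) combined with the first item, with the constraint becoming vacuous when $M\le 0$). Throughout, the identity $|\boldsymbol{\ell}|=\sum_s \ell_s=\sum_s m_s+\tfrac{r(r+1)}{2}$ links the bookkeeping back to $|\boldsymbol{\ell}|$.

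For the lower bound, I would examine the coupling of $|\boldsymbol{\ell}|$ to the lower bound $M$ in condition (iii). The tightest regime, which drives the minimum, is $|\boldsymbol{\ell}|=\ell$ and hence $M=2rn$: then one is selecting $r$ pairwise distinct non-negative integers $m_s$, each at least $2rn$. The minimum of $\sum_s m_s$ over such configurations is achieved by $\{2rn,2rn+1,\ldots,2rn+r-1\}$ and equals $2r^2n+\tfrac{r(r-1)}{2}$. Plugging into the identity above gives
\[
\ell=|\boldsymbol{\ell}|=\sum_s m_s+\frac{r(r+1)}{2}\ge 2r^2n+\frac{r(r-1)+r(r+1)}{2}=2r^2n+r^2=(2n+1)r^2.
\]

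For the matching upper bound, I would exhibit the explicit witness $\ell_s=2rn+2s+1$ (equivalently $m_s=2rn+s$) for $0\le s\le r-1$. A short computation gives $|\boldsymbol{\ell}|=r(2rn+1)+r(r-1)=(2n+1)r^2$, and one checks directly that the three conditions all fail: each $\ell_s\ge s+1$ is immediate; the $m_s=2rn+s$ are distinct; and each $m_s\ge 2rn=M$. Hence $(2n+1)r^2$ is attained.

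The main obstacle I anticipate is the careful ruling out of regimes with $|\boldsymbol{\ell}|<\ell$ for $\ell<(2n+1)r^2$. In those regimes $M<2rn$, relaxing (iii), but one has fewer $m_s$ to fill the sum $|\boldsymbol{\ell}|\le\ell$. The argument I would use is to combine the lower bound $\sum_s m_s\ge rM+\tfrac{r(r-1)}{2}$ (valid whenever $M>0$, from distinctness and $m_s\ge M$) with the identity relating $\sum m_s$ to $|\boldsymbol{\ell}|$, substitute $M=2rn-\ell+|\boldsymbol{\ell}|$, and rearrange the resulting linear inequality in $(|\boldsymbol{\ell}|,\ell)$ together with $|\boldsymbol{\ell}|\le\ell$ to force $\ell\ge(2n+1)r^2$. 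The case $M\le 0$ is then handled separately by checking that in it the additional slack in $\ell-|\boldsymbol{\ell}|$ still cannot drop below the bound, completing the proof.
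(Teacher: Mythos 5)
Your proposal takes the same approach as the paper: translate the simultaneous failure of (i)--(iii) into ``the quantities $m_s := \ell_s-(s+1)$ are $r$ pairwise distinct non-negative integers, each $\geq M := 2rn-\ell+|\boldsymbol{\ell}|$,'' lower-bound $\sum_s m_s$ accordingly, and use the identity $\sum_s m_s = |\boldsymbol{\ell}|-\binom{r+1}{2}$. Your computation in the regime $|\boldsymbol{\ell}|=\ell$, and the explicit witness $\ell_s = 2rn+2s+1$ for the upper bound, match the paper (the latter is left implicit there).

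The regime $|\boldsymbol{\ell}|<\ell$ -- exactly what you flag as the ``main obstacle'' -- is, however, not closed by the rearrangement you describe, and the paper's proof has the identical gap. Substituting gives $|\boldsymbol{\ell}|+r(\ell-|\boldsymbol{\ell}|)\geq(2n+1)r^2$, i.e.\ $\ell+(r-1)(\ell-|\boldsymbol{\ell}|)\geq(2n+1)r^2$; together with $0\leq|\boldsymbol{\ell}|\leq\ell$ this only yields $r\ell\geq(2n+1)r^2$, i.e.\ $\ell\geq(2n+1)r$, not $\ell\geq(2n+1)r^2$. The paper's closing line (``Since $\ell-|\boldsymbol{\ell}|\geq 0$, the lemma follows'') is, for $r\geq 2$, a non-sequitur. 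Concretely, for $r=2$, $n=1$, take $\ell=8$ and $\boldsymbol{\ell}=(1,3)$: then $m_0=0$, $m_1=1$ are distinct and non-negative, and $M=2rn-\ell+|\boldsymbol{\ell}|=4-8+4=0$, so condition (iii) is vacuously false while (i) and (ii) are readily checked to be false as well -- yet $8<12=(2n+1)r^2$. So the ``case $M\leq 0$'' you announce you would treat separately is not a loose end but the crux, and it cannot be resolved from conditions (i)--(iii) as literally stated; one would need to sharpen condition (iii) (the single-variable vanishing order $2rn-j$ ignores that the $j$ derivatives $\partial/\partial\alpha$ of $f$ are distributed over all $2r$ variables $X_s,Y_s$) before the counting can close. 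Your proof, like the paper's, leaves this regime genuinely open.
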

\begin{proof} Assume conditions $(i), (ii)$ and $(iii)$ are false, then the set $\{\ell_s-s-1\}$ is at least
$\{2rn-\ell+\vert\boldsymbol{\ell}\vert;\ldots;2rn-\ell+\vert\boldsymbol{\ell}\vert+r-1\}$ and $\sum_{s=0}^{r-1}(\ell_s-s-1)\geq  r(2rn-\ell+\vert\boldsymbol{\ell}\vert)+r(r-1)/2$, that is 
$\vert \boldsymbol{\ell}\vert+r(\ell-\vert\boldsymbol{\ell}\vert)\geq2 r^2n+r^2$. 
Since $\ell-\vert\boldsymbol{\ell}\vert\geq 0$, the lemma follows.
\end{proof}

{\noindent \bf End of the proof of Proposition~\ref{decompose Cnum} $({{c}})$~:}

Lemma~\ref{combi} ensures that 
$$\frac{\partial^{\ell}}{\partial \alpha^{\ell}}\Delta_{\alpha}(C_{u,m})=0 \hspace{15pt} \text{for all} \ 0\leq \ell\leq (2n+1)r^2-1\enspace.$$ 
This completes the proof of Proposition $\ref{decompose Cnum}$. \qed

\subsection{Last step}
We shall reduce by induction the non-vanishing of $c_{u,m}$ to the non-vanishing of $c_{u,0}$ (which is obviously equal to 1). First, we prove,
\begin{lemma}\label{reduc} 
Set $\displaystyle {{\mathfrak{A}}}(\boldsymbol{t})=\kern-3pt{\prod_{s={{0}}}^{{{r-1}}}}\kern-3pt \left[t^{u}_{m,s}\kern-2pt \cdot\kern-2pt (t_{m,s}-1)^{rn}\right] \cdot \kern-10pt\prod_{1\le s<s'\le r}\kern-10pt(t_{m,s}-t_{m,s'})$ and ${{\mathcal{L}}}_m=\bigcirc_{1\leq s\leq r}{{\tilde{\psi}}}_{1,m,s} $. Then, 
$$c_{u,m}=(-1)^{r^2n(m-1)}c_{u+r(n+1),m-1}\cdot {{\mathcal{L}}}_m\left({{\mathfrak{A}}}(\boldsymbol{t}))\right)\enspace.$$
\end{lemma}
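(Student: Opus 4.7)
The strategy I plan to use is to peel off the block of operators associated with index $i=m$ from $C_{u,m}=\Psi(\hat{P}_{u,m})$ and compare the resulting expression with the explicit factorization of Proposition~\ref{decompose Cnum}. Since the operators $\tilde{\psi}_{\alpha_i,i,s}$ for different pairs $(i,s)$ pairwise commute (Lemma~\ref{prelimi}~$(i)$) and satisfy the rescaling identity $\tilde{\psi}_{\alpha,m,s}(B(t_{m,s})) = \tilde{\psi}_{1,m,s}(B(\alpha t_{m,s}))$, one has $C_{u,m} = \Psi_{m-1}\circ\cdots\circ\Psi_1\bigl(\mathcal{L}_m(\sigma\hat{P}_{u,m})\bigr)$, where $\Psi_i:=\Psi_{\alpha_i,i}$ and $\sigma$ denotes the substitution $t_{m,s}\mapsto \alpha_m\tau_{m,s}$.

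Next I would expand $\sigma\hat{P}_{u,m}$ as a polynomial in $\alpha_m$ and extract the coefficient of $\alpha_m^{ru+r^2n+\binom{r}{2}}$. This power is produced unconditionally by the three ``internal'' factors $\prod_s t_{m,s}^u$, $\prod_s (t_{m,s}-\alpha_m)^{rn}$ and $\prod_{s_1<s_2}(t_{m,s_2}-t_{m,s_1})$ after substitution, leaving exactly $\mathfrak{A}(\boldsymbol{\tau})$ (up to an overall sign $(-1)^{\binom{r}{2}}$) as the $\tau$-dependent residue. The only additional sources of $\alpha_m$ are the three cross families $(\alpha_m\tau_{m,s}-\alpha_j)^{rn}$ for $j<m$, $(t_{i,s}-\alpha_m)^{rn}$ for $i<m$, and $(\alpha_m\tau_{m,s_2}-t_{i,s_1})$ for $i<m$, so the coefficient is obtained by specialising each of them at $\alpha_m=0$. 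A direct evaluation should give
\begin{equation*}
\bigl[\alpha_m^{ru+r^2n+\binom{r}{2}}\bigr]\sigma\hat{P}_{u,m} = (-1)^{r^2(m-1)(n+1)}\prod_{j<m}\alpha_j^{r^2n}\,\mathfrak{A}(\boldsymbol{\tau})\,\hat{P}_{u+r(n+1), m-1}(\underline{t}_{i<m}),
\end{equation*}
the crucial shift $u\mapsto u+r(n+1)$ arising by combining $t_{i,s}^u$ with $(t_{i,s}-\alpha_m)^{rn}|_{\alpha_m=0}=t_{i,s}^{rn}$ and $\prod_{s_2}(\alpha_m\tau_{m,s_2}-t_{i,s_1})|_{\alpha_m=0}=(-1)^{r}t_{i,s_1}^{r}$, producing $t_{i,s}^{u+rn+r}=t_{i,s}^{u+r(n+1)}$.

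The proof then concludes by applying $\mathcal{L}_m$ (which pulls $\mathfrak{A}(\boldsymbol{\tau})$ out as the scalar $\mathcal{L}_m(\mathfrak{A})$) and then $\Psi_{m-1}\circ\cdots\circ\Psi_1$ (which turns $\hat{P}_{u+r(n+1),m-1}$ into $C_{u+r(n+1),m-1}$), and matching the result against the coefficient of $\alpha_m^{ru+r^2n+\binom{r}{2}}$ in the factorization of Proposition~\ref{decompose Cnum}, which equals $c_{u,m}(-1)^{(m-1)(2n+1)r^2}\prod_{i<m}\alpha_i^{(2n+1)r^2+ru+r^2n+\binom{r}{2}}\prod_{i_1<i_2<m}(\alpha_{i_2}-\alpha_{i_1})^{(2n+1)r^2}$. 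Substituting the analogous factorization for $C_{u+r(n+1),m-1}$, all the $\alpha_i$-powers (common exponent $ru+3r^2n+r^2+\binom{r}{2}$) and Vandermonde factors cancel, and using $r^2(m-1)(n+1)-(m-1)(2n+1)r^2=-r^2n(m-1)$ yields the claimed $c_{u,m}=(-1)^{r^2n(m-1)}c_{u+r(n+1),m-1}\,\mathcal{L}_m(\mathfrak{A})$.

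The main obstacle I anticipate is the disciplined bookkeeping around signs, exponents of $\alpha_j$, and the precise matching of the $\tau$-part with $\mathfrak{A}$ and the $t_{i,s}$-part with $\hat{P}_{u+r(n+1),m-1}$ (in particular verifying that the shift is exactly $u\mapsto u+r(n+1)$ and not a different arithmetic combination of $r$, $n$, $m$). No fundamentally new idea beyond expansion-and-match seems required; the entire argument rests on the fact that extracting one specific monomial coefficient in $\alpha_m$ reduces the $m$-fold integral to an $(m-1)$-fold integral of the same type.
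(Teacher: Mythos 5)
Your argument is essentially the same as the paper's: the paper writes $Q_{u,m}=Q_{u,m-1}\cdot\mathfrak{A}\cdot\mathfrak{B}$, evaluates $\mathfrak{B}$ at $\alpha_m=0$, and compares against $D_{u,m}|_{\alpha_m=0}$, which is exactly your "rescale the $m$-th block and extract the bottom coefficient in $\alpha_m$" step, and the paper's arithmetic for the sign $(-1)^{r^2(m-1)(n+1)}$, the factor $\prod_{i<m}\alpha_i^{r^2n}$, and the shift $u\mapsto u+r(n+1)$ is identical to yours. One small bookkeeping remark: the rescaling identity $\tilde{\psi}_{\alpha_m,m,s}(B(t_{m,s}))=\alpha_m\tilde{\psi}_{1,m,s}(B(\alpha_m t_{m,s}))$ composed over $r$ values of $s$ produces an overall factor $\alpha_m^r$, so strictly one has $C_{u,m}=\alpha_m^r\,\Psi_{m-1}\circ\cdots\circ\Psi_1\bigl(\mathcal{L}_m(\sigma\hat{P}_{u,m})\bigr)$; you dropped that factor, but this silently compensates for the fact that the exponent in Proposition~\ref{decompose Cnum} is written as $ru+r^2n+\binom{r}{2}$ whereas the proof of Lemma~\ref{homo} (and the paper's own proof of Lemma~\ref{reduc}) actually shows divisibility by $\alpha_i^{r(u+1)+r^2n+\binom{r}{2}}$, so your ledger closes exactly when both conventions are used consistently — which you did. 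The compensating $(-1)^{\binom{r}{2}}$ in $\mathfrak{A}$ versus the ordered Vandermonde block, which you flagged, is likewise present in the paper's own $Q_{u,m}=Q_{u,m-1}\mathfrak{A}\mathfrak{B}$ decomposition and does not affect the final non-vanishing argument.
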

\begin{proof}
Set ${{\hat{\mathcal{L}}}}=\bigcirc_{i=1}^{m-1}\bigcirc_{1\leq i\leq m-1}\bigcirc_{1\leq s\leq r}{{\tilde{\psi}}}_{1,i,s}$ so that $\Psi_{\bold{1}}={{\hat{\mathcal{L}}}}\circ{{\mathcal{L}}}_m$ and
recall that by $(\ref{equality 1})$, 
$$ \label{deco Cnum2} 
D_{u,m}:=\dfrac{C_{u,m}}{\prod_{i=1}^m\alpha^{r(u+1)+r^2n+\binom{r}{2}}_i}=c_{u,m}\prod_{1\le i<j\le m}(\alpha_{j}-\alpha_{i})^{(2n+1)r^2}=\Psi_{\boldsymbol{1}}\left(Q_{u,m}\right)\enspace.
$$
We are going to evaluate $D_{u,m}$ at $\alpha_m=0$  and thus separate the variables in $Q_{u,m}$ first. By definition, one has
\begin{equation*}
Q_{u,m}(\boldsymbol{t})=Q_{u,m-1}(\boldsymbol{t})\cdot {{\mathfrak{A}}}(\boldsymbol{t}){{\mathfrak{B}}}(\boldsymbol{t})\enspace,
\end{equation*} 
where 
$${{\mathfrak{B}}}(\boldsymbol{t})=\prod_{s={{0}}}^{{{r-1}}}\prod_{j\neq m}(\alpha_mt_{m,s}-\alpha_j)^{rn}\cdot
\prod_{i=1}^{m-1} \prod_{s={{0}}}^{{r-1}} (\alpha_it_{i,s}-\alpha_m)^{rn}
\cdot  {\prod_{1\le i< m}\prod_{{{0}}\le s,s'\le {{r-1}}}}\kern-8pt(\alpha_{m}t_{m,s'}-\alpha_{i}t_{i,s})\enspace.
$$
Note that $Q_{u,m-1}, {{\mathfrak{A}}}$ do not depend on $\alpha_m$, and $\Psi_{\boldsymbol{1}}$ treats $\alpha_m$ as a scalar.
Hence,
\begin{align} \label{compare 1}
\left.{D_{u,m}}\right|_{\alpha_m=0} &= \displaystyle c_{u,m}\prod_{i=1}^{m-1}(-\alpha_i)^{(2n+1)r^2}\prod_{1\le i<j\le m-1}(\alpha_{j}-\alpha_{i})^{(2n+1)r^2}\enspace\\
&=\Psi_{\boldsymbol{1}}\left(Q_{u,m-1}(\boldsymbol{t}){{\mathfrak{A}}}(\boldsymbol{t})\left.{{\mathfrak{B}}}(\boldsymbol{t})\right|_{\alpha_m=0}\right)\enspace. \nonumber
\end{align}
But 
$$\left.{{\mathfrak{B}}}(\boldsymbol{t})\right|_{\alpha_m=0}\kern-1pt=\kern-3pt\prod_{j=1}^{m-1}\kern-2pt(-\alpha_j)^{r^2n}\kern-3pt\prod_{i=1}^{m-1}\kern-2pt\prod_{s={{0}}}^{{r-1}}(\alpha_it_{i,s})^{rn\kern-4pt}\prod_{i=1}^{m-1}\kern-3pt\prod_{s={{0}}}^{{{r-1}}}(\kern-1pt-\alpha_it_{i,s})^r\kern-2pt=\kern-2pt(-1)^{r^2(m-1)(n+1)}\kern-3pt\prod_{i=1}^{m-1}\alpha_i^{(2n+1)r^2}\kern-2pt\prod_{i=1}^{m-1}\kern-3pt\prod_{s={{0}}}^{{r-1}}t_{i,s}^{r(n+1)}\enspace\kern-3pt.$$
We now note  that $\theta_{m}$ treats the variables $t_{i,s}, 1\leq i\leq m-1$ as scalars and ${{\hat{\mathcal{L}}}}$ treats variables $t_{m,s}$ as scalars and remark $$Q_{u,m-1}(\boldsymbol{t})\!\left.{{\mathfrak{B}}}(\boldsymbol{t})\right|_{\alpha_m=0}=(-1)^{r^2(m-1)(n+1)}\prod_{i=1}^{m-1}\alpha_i^{(2n+1)r^2}Q_{u+r(n+1),m-1}{{(\boldsymbol{t})}}\enspace.$$
Thus
$$\Psi_{\boldsymbol{1}}\left(Q_{u,m-1}(\boldsymbol{t}){{\mathfrak{A}}}(\boldsymbol{t})\left.{{\mathfrak{B}}}(\boldsymbol{t})\right|_{\alpha_m=0}\right)
=(-1)^{r^2(m-1)(n+1)}\prod_{i=1}^{m-1}\alpha_i^{(2n+1)r^2}{{\hat{\mathcal{L}}}}(Q_{u+r(n+1),m-1}(\boldsymbol{t})){{\mathcal{L}}}_m({{\mathfrak{A}}}(\boldsymbol{t}))\enspace.$$
Using the relation~(\ref{compare 1}), taking into account $D_{u+r(n+1),m-1}=\Psi_{\boldsymbol{1}}(Q_{u+r(n+1),m-1}(\boldsymbol{t}))$ and simplifying,
$$c_{u,m}=
(-1)^{r^2n(m-1)}c_{u+r(n+1),m-1}\cdot {{\mathcal{L}}}_m({{\mathfrak{A}}}(\boldsymbol{t}))\enspace.$$
This completes the proof of Lemma $\ref{reduc}$.
\end{proof}
By Lemma $\ref{reduc}$, to prove the non-vanishing of the value $c_{u,m}$, it is enough to show ${{\mathcal{L}}}_m({{\mathfrak{A}}}(\boldsymbol{t}))\neq 0$.
Denote the cardinality of the set $\{\zeta_1,\ldots,\zeta_r\}$ by $d$. If we need, by changing the order, we may assume $\{\zeta_1,\ldots,\zeta_r\}=\{\zeta_1,\ldots,\zeta_d\}$ and 
$$(\zeta_1,\ldots,\zeta_r)=(\overbrace{\zeta_1,\ldots,\zeta_1}^{r_1},\ldots ,\overbrace{\zeta_{d},\ldots,\zeta_{d}}^{r_d})\enspace,$$
where $r_j$ is the multiplicity of $\zeta_j$ for $1\le j \le d$. For an integer $s$, we define the $K$-homomorphism $\varphi_{\zeta_j,s}$ by
$$\varphi_{\zeta_j,s}:K[t]\longrightarrow K; \ t^k\mapsto \dfrac{1}{(k+\zeta_j)^s}\enspace.$$
\begin{lemma}
There exists $E\in K\setminus\{0\}$ with 
\begin{align} \label{det last}
{{\mathcal{L}}}_m({{\mathfrak{A}}}(\boldsymbol{t}))=E\cdot {\rm{det}}\left(\varphi_{\zeta_j,s_j}(t^{u+\ell}(t-1)^{rn})\right)_{\substack{0\le \ell \le r-1 \\ 1\le j \le d, 1\le s_j \le r_j}}\enspace.
\end{align}
Especially, the value ${{\mathcal{L}}}_m({{\mathfrak{A}}}(\boldsymbol{t}))$ is not zero.
\end{lemma}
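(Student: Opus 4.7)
The plan is to reduce $\mathcal{L}_m(\mathfrak{A}(\boldsymbol{t}))$ to a single scalar determinant by recognizing a Vandermonde hiding inside $\mathfrak{A}$, then change basis from the operators $\tilde{\psi}_{1,m,s}$ to the simpler functionals $\varphi_{\zeta_j,s_j}$ via partial fractions. The change-of-basis matrix $M$ should come out triangular with non-vanishing diagonal, yielding the stated identity with $E\neq 0$; the ultimate non-vanishing of $\mathcal{L}_m(\mathfrak{A})$ will then be established by showing the remaining Hermite-type determinant is non-zero via an integral representation.

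The first step is the Vandermonde reduction. Using
\[\prod_{1\le s<s'\le r}(t_{m,s'}-t_{m,s})=\det(t_{m,s}^{\ell})_{1\le s\le r,\,0\le\ell\le r-1},\]
and absorbing the factor $t_{m,s}^{u}(t_{m,s}-1)^{rn}$ into row $s$, one writes $\mathfrak{A}(\boldsymbol{t})=(-1)^{\binom{r}{2}}\det\bigl(t_{m,s}^{u+\ell}(t_{m,s}-1)^{rn}\bigr)_{s,\ell}$. Since the operators $\tilde{\psi}_{1,m,s}$ act on disjoint variables $t_{m,s}$, $\mathcal{L}_m$ acts on this determinant row-by-row, giving
\[\mathcal{L}_m(\mathfrak{A})=(-1)^{\binom{r}{2}}\det\bigl(\tilde{\psi}_{1,m,s}(t^{u+\ell}(t-1)^{rn})\bigr)_{s,\ell}.\]

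The second step is the change of basis. Using $\gamma_i=\zeta_{r+1-i}$, one computes $\tilde{\psi}_{1,m,s}(t^k)=1/\prod_{j=1}^{s+1}(k+\zeta_j)$. Ordering the pairs $(j,s_j)$ lexicographically to match the natural build-up of the poles of $(\zeta_1,\ldots,\zeta_{s+1})$ as $s$ grows, a partial fraction expansion exhibits
\[\tilde{\psi}_{1,m,s}=\sum_{(j,s_j)}M_{s,(j,s_j)}\varphi_{\zeta_j,s_j},\]
with $M$ lower triangular in the chosen order. At each step of $s$ the newly added factor either raises the multiplicity at an existing pole (contributing a unit diagonal entry) or opens a brand new simple pole (with diagonal entry a nonzero product of differences of distinct $\zeta_j$'s); hence $\det M\neq 0$. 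Setting $E:=(-1)^{\binom{r}{2}}\det M$ completes the factorization.

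The third step, non-vanishing of $D:=\det\bigl(\varphi_{\zeta_j,s_j}(t^{u+\ell}(t-1)^{rn})\bigr)$, uses the integral representation
\[\varphi_{\zeta,s}(p(t))=\frac{1}{(s-1)!}\int_0^1 x^{\zeta-1}(-\log x)^{s-1}p(x)\,dx.\]
Multilinearity in the $r$ columns rewrites $D$ as an $r$-fold integral over $[0,1]^r$ of the weight $\prod_{\ell}x_\ell^{u}(x_\ell-1)^{rn}$ multiplied by a confluent Vandermonde $\det\bigl(x_\ell^{\zeta_j-1}(-\log x_\ell)^{s_j-1}/(s_j-1)!\bigr)$ and by $\prod_{\ell}x_\ell^{\ell}$. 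Antisymmetrizing the last factor against the determinant produces a Vandermonde $\prod_{\ell<\ell'}(x_{\ell'}-x_\ell)$ and restricts the integration to the ordered simplex, on which every factor has definite sign — the confluent Vandermonde because $\{x^{\zeta_j-1}(\log x)^{s_j-1}\}$ forms a Chebyshev system on $(0,1)$. Hence $D\neq 0$. The main obstacle will be exactly this last step: the Vandermonde and partial-fraction bookkeeping are routine once the notation is set, but the Hermite-type non-vanishing requires genuine analytic input and some care at boundary convergence when the $\zeta_j$ are not all positive. A safer, purely algebraic alternative would be to view $D$ as a rational function of $(\zeta_1,\ldots,\zeta_d)$ and argue non-identical-vanishing by a convenient specialization — but either way, this is where the real work sits.
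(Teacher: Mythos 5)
Your first two steps — the Vandermonde reduction of $\mathfrak{A}$ to a single determinant $\det\bigl(\tilde\psi_{1,m,s}(t^{u+\ell}(t-1)^{rn})\bigr)_{s,\ell}$, and the partial-fraction change of basis from the composite functionals $\psi_s\colon t^k\mapsto 1/\bigl((k+\zeta_1)\cdots(k+\zeta_{s+1})\bigr)$ to the pure pole functionals $\varphi_{\zeta_j,k}$, with lower-triangular change-of-basis matrix — match the paper's proof exactly. One small imprecision: your claim that raising the multiplicity of an existing pole always contributes a unit diagonal entry is only true for the first group $\zeta_1=\cdots=\zeta_{r_1}$; in general the diagonal entry is $p_{w,s_w}=1/\prod_{j<w}(\zeta_j-\zeta_w)^{r_j}$, which depends on the group index $w$ and is non-zero precisely because the $\zeta_j$ with distinct indices $j$ are pairwise distinct. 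Your conclusion $\det M\neq 0$ stands.

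Where you genuinely diverge from the paper is the final non-vanishing of $D=\det\bigl(\varphi_{\zeta_j,s_j}(t^{u+\ell}(t-1)^{rn})\bigr)$. The paper does not prove this here at all — it simply cites \cite[Proposition $4.12$]{DHK4}. Your proposed argument (integral representation $\varphi_{\zeta,s}(p)=\tfrac{1}{(s-1)!}\int_0^1 x^{\zeta-1}(-\log x)^{s-1}p(x)\,dx$, the Andr\'eief identity to pass to an $r$-fold integral, a Vandermonde from the $x^\ell$ factors, and the extended Chebyshev property of the system $\{x^{\zeta_j-1}(-\log x)^{s_j-1}\}$ to pin down the sign of the integrand on the ordered simplex) is a legitimate self-contained route and is valid in the arithmetic regime the paper actually uses (strictly positive rational $\zeta_j$, see the standing hypotheses in the Estimates section), though — as you correctly flag — it does not cover the abstract characteristic-zero field in which Section~\ref{nonvan} is nominally set. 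Your ``purely algebraic'' fallback by specialization is the right instinct if one wants the lemma in that generality; the paper sidesteps this by delegating to the earlier reference, which it can afford because the final application only needs positive rationals. In short: same skeleton for the reduction, but you supply an actual proof of the Hermite-type determinant's non-vanishing where the paper offers a citation.
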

\begin{proof}
Define 
\begin{align} \label{psi s}
\psi_s: K[t]\longrightarrow K; \ t^k\mapsto \dfrac{1}{(k+\gamma_{r-s})\cdots(k+\gamma_r)}=\dfrac{1}{(k+\zeta_1)\cdots(k+\zeta_{s+1})}\enspace,
\end{align}
for $0\le s \le r-1$. Then we have 
\begin{align} \label{psi s}
{{\mathcal{L}}}_m({{\mathfrak{A}}}(\boldsymbol{t}))={\rm{det}}(\psi_s(t^{u+\ell}(t-1)^{rn}))_{\substack{0\le s \le r-1 \\ 0\le \ell \le r-1}}\enspace.
\end{align}
For $0\le s \le r-1$, there exist $1\le w \le d$ and $1 \le s_w \le r_w$ with 
\begin{align} \label{u,s_u}
s+1=r_1+\cdots+r_{w-1}+s_w\enspace.
\end{align}
Put 
$${{p}}_{j,k}=
\begin{cases}
\left. \dfrac{1}{(r_j-k)!}\dfrac{d^{r_j-k}}{dX^{r_j-k}}\dfrac{1}{\prod_{\substack{1\le j'\le w-1\\ j'\neq j}}(X+\zeta_{j'})^{r_{j'}}(X+\zeta_w)^{s_w}}\right|_{X=-\zeta_j} &  \ \text{if} \ \ 1\le j \le w-1, 1\le k \le r_j\enspace,\\
\left. \dfrac{1}{(s_w-k)!}\dfrac{d^{s_w-k}}{dX^{s_w-k}}\dfrac{1}{\prod_{j=1}^{w-1}(X+\zeta_j)^{r_j}}\right|_{X=-\zeta_{w}} & \ \text{if} \ \ j=w, 1\le k \le s_w\enspace.
\end{cases} 
$$
Then we have ${{p}}_{w,s_w}=\dfrac{1}{\prod_{j=1}^{w-1}(\zeta_j-\zeta_w)^{r_j}}\neq 0$ and 
\begin{align} \label{psi s 2}
\psi_s=\sum_{j=1}^{w-1}\sum_{k=1}^{r_j}{{p}}_{j,k}\varphi_{\zeta_j,k}+\sum_{k=1}^{s_w}{{p}}_{w,k}\varphi_{\zeta_w,k}\enspace.
\end{align}
Put $E=\prod_{0\le s \le r-1}{{p}}_{w,s_w}\neq 0$ where $(w,s_w)$ is the pair of integers defined as in $(\ref{u,s_u})$ for $s+1$. Then by equalities $(\ref{psi s})$, $(\ref{psi s 2})$ and the linearity of the determinant, we obtain $(\ref{det last})$.
{{The non-vanishing of the determinant $${\rm{det}}\left(\varphi_{\zeta_j,s_j}(t^{u+\ell}(t-1)^{rn})\right)_{\substack{0\le \ell \le r-1 \\ 1\le j \le d, 1\le s_j \le r_j}}$$
has been obtained in \cite[Proposition $4.12$]{DHK4}.}}
\end{proof}

\section{Estimates}
In this subsection, we use the following notations. 
Let $K$ an algebraic number field and $v$ be a place of $K$. Denote by $K_v$ the completion of $K$ at $v$, $\vert\cdot\vert_v$ the absolute value corresponding to $v$.
Let $\eta_1,\ldots,\eta_r,\zeta_1,\ldots,\zeta_r$ be strictly positive rational numbers with $\eta_i-\zeta_j\notin \N$ for $1 \le i,j \le r$.
Put $A(X)=(X+\eta_1)\cdots (X+\eta_r)$ and $B(X)=(X+\zeta_1)\cdots (X+\zeta_r)$.
We shall choose a sequence $\boldsymbol{c}:=(c_k)_{k\ge0}$ satisfying $c_k\in K\setminus\{0\}$ and \eqref{recurrence 1} for the given polynomials $A(X),B(X)$.
Let $\boldsymbol{\alpha}:=(\alpha_1,\ldots,\alpha_m)\in (K\setminus\{0\})^m$ whose coordinates are pairwise distinct and $n$ be a non-negative integer.
We choose $\gamma_1=\zeta_r,\ldots,\gamma_{r-1}=\zeta_2$.
For non-negative integer $\ell$ with $0\le \ell \le rm$, recall the polynomials $P_{\ell}(z), P_{\ell,i,s}(z)$ defined as in $(\ref{Pl})$ and $(\ref{Plis})$ for the given data.

Throughout the section, the small $o$-symbol  $o(1)$ and $o(n)$ refer when $n$ tends to infinity. 
Put $\varepsilon_v=1$ if $ v\mid \infty$ and $0$ otherwise.

Let $I$ be a non-empty finite set of indices, ${{R}}=K[\alpha_i]_{i\in I}[z,t]$ be a polynomial ring in indeterminate $\alpha_i,z,t$. For a non-negative integer $n$ and $\zeta\in \Q\setminus\{0\}$, we define $$S_{n,\zeta}: K[t]\longrightarrow K[t]; \ t^k\mapsto \dfrac{(k+\zeta+1)_n}{n!}t^k\enspace.$$ 

We set $\|P\|_v=\max\{\vert a\vert_v\}$ where $a$ runs in the coefficients of $P$. Thus ${{R}}$ is endowed with a structure of normed vector space. If $\phi$ is an endomorphism of ${{R}}$, we denote by $\| \phi\|_{v}$ the endomorphism norm defined in a standard way $\| \phi\|_v=\inf\{M\in\ru, \forall\kern3pt x\in {{R}}, \kern3pt\|\phi(x)\|_v\leq M\| x\|_v\}=\sup\left\{\frac{\|\phi(x)\|_v}{\| x\|_v}, 0\neq x\in {{R}}\right\}$. This norm is well defined provided $\phi$ is continuous. Unfortunately, we will have to deal  also with non-continuous morphisms. In such a situation, we restrict the source space to some appropriate sub-vector space $E$ of ${{R}}$ and talk of $\|\phi\|_v$ with $\phi$ seen  as $\left.\phi\right|_{E}: E\longrightarrow {{R}}$ on which $\phi$ is continuous. In case of perceived ambiguity, it will be denoted by $\| \phi\|_{E,v}$.
The degree of an element of ${{R}}$ is as usual the total degree. 

{{For a rational number $x$, we denote by $\lceil x \rceil$ the greatest integer less than or equal to $x$.}} 
\begin{lemma} \label{valuation} $(${\it confer} {\rm{\cite[Lemma $4.1$ $(ii)$]{KP}}}$)$
Let $a,b\in \Q$ {{which are not negative integers}}. 
For a {{non-negative integer}} $n$, put $$D_n={\rm{den}}\left(\dfrac{(a)_0}{(b)_0},\ldots,\dfrac{(a)_n}{(b)_n}\right)\enspace.$$
Then we have
$$\limsup_{n\to \infty}\dfrac{1}{n}\log\,D_n\le \log\,\mu(a)+\dfrac{{\rm{den}}(b)}{\varphi({\rm{den}}(b))}\enspace,$$
where $\varphi$ is the Euler's totient function.
\end{lemma}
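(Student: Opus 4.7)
The plan is a prime-by-prime analysis of $v_p(D_n)$. Write $a=a'/A$ and $b=b'/B$ in lowest terms with $A=\mathrm{den}(a)$, $B=\mathrm{den}(b)$ to obtain
\[
\frac{(a)_k}{(b)_k}=\Bigl(\frac{B}{A}\Bigr)^k\cdot\frac{\prod_{j=0}^{k-1}(a'+jA)}{\prod_{j=0}^{k-1}(b'+jB)}.
\]
Then $\log D_n=\sum_p v_p(D_n)\log p$ with $v_p(D_n)=\max_{0\leq k\leq n}\max\bigl(0,-v_p((a)_k/(b)_k)\bigr)$, and it suffices to bound each $v_p(D_n)$ and sum the contributions.

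The key technical tool is a Legendre-type count of $p$-adic valuations along an arithmetic progression: for $p\nmid A$, the set $\{a'+jA\bmod p^i\}_{0\leq j<k}$ hits each residue class $\lfloor k/p^i\rfloor$ or $\lceil k/p^i\rceil$ times, yielding
\[
v_p\Bigl(\textstyle\prod_{j=0}^{k-1}(a'+jA)\Bigr)=\frac{k}{p-1}+O(\log k);
\]
whereas if $p\mid A$, each $a'+jA$ is a $p$-adic unit (since $\gcd(a',A)=1$), so the product is a unit and $v_p((a)_k)=-k\,v_p(A)$. The analogous dichotomy applies to $(b)_k$ with $B$ in place of $A$.

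I would then split the primes into three groups and sum each. Primes $p\mid A$ contribute at rate $(v_p(A)+1/(p-1))\log p$ per unit of $n$ (the numerator losing $p$-powers linearly while the denominator gains them at rate $1/(p-1)$), accumulating over $p\mid A$ to $\log\mu(a)$. Primes $p\mid B$ with $p\nmid A$ contribute at rate $v_p(B)\log p$, partially offset by the $\log p/(p-1)$ cancellation coming from the numerator, and sum to at most $\mathrm{den}(b)/\varphi(\mathrm{den}(b))$. Finally, primes $p\nmid AB$ yield $v_p((a)_k)$ and $v_p((b)_k)$ both equal to $k/(p-1)+O(\log k)$, so they nearly cancel and each such prime contributes only $O(\log n)$ to $\log D_n$.

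The main obstacle I anticipate lies in this last group: although each individual prime is cheap, the number of primes $p\leq n+|a|+|b|$ to consider is of order $n/\log n$. Controlling the aggregated tail requires a uniform (in $p$) bound on the fluctuation $\lceil k/p^i\rceil-\lfloor k/p^i\rfloor\leq 1$ in the Legendre count, combined with the prime number theorem, to show that the total is $o(n)$. Once the three group sums are combined and divided by $n$, passing to $\limsup$ yields the stated estimate.
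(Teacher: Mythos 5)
Your proposal assembles the right elementary tools (the Legendre-type count for arithmetic progressions and the dichotomy according to $p\mid A$ versus $p\nmid A$), but misapplies them in two essential ways, either of which would block the proof as planned.

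First, a sign error. For $p\mid B:=\mathrm{den}(b)$ with $p\nmid A:=\mathrm{den}(a)$, writing $(b)_k=\prod_{j<k}(b'+jB)/B^k$ with $\gcd(b',B)=1$ shows $v_p((b)_k)=-k\,v_p(B)$, so the factor $B^k/A^k$ appearing in $(a)_k/(b)_k$ has strictly \emph{positive} $p$-adic valuation and therefore \emph{increases} integrality. Since also $v_p((a)_k)\approx k/(p-1)\ge 0$ when $p\nmid A$, one has $v_p\bigl((a)_k/(b)_k\bigr)>0$ for these primes: they contribute nothing to $D_n$. Your claimed contribution rate $v_p(B)\log p$ for this group is backwards, and the sum $\mathrm{den}(b)/\varphi(\mathrm{den}(b))$ which you attribute to it does not originate here.

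Second, and decisively, the group $p\nmid AB$ — which you flag as the "main obstacle" and then assert totals $o(n)$ — in fact produces the entire term $\mathrm{den}(b)/\varphi(\mathrm{den}(b))$. Each such prime contributes $O(\log n)$, but the number of relevant primes is of order $n/\log n$, so the aggregate is of order $n$, not $o(n)$; the prime number theorem does not save the claim, it confirms a linear total. The paper handles this via the coarse but effective splitting $D_n\le D_{1,n}D_{2,n}$ with $D_{1,n}=\mathrm{den}\bigl((a)_k/k!\bigr)_{k\le n}$ and $D_{2,n}=\mathrm{den}\bigl(k!/(b)_k\bigr)_{k\le n}$, citing Beukers for $\limsup\frac1n\log D_{1,n}\le\log\mu(a)$, and treating $D_{2,n}$ by Siegel's argument: with $d=\mathrm{den}(b)$, $c=d\,b$, $N_k=c(c+d)\cdots(c+(k-1)d)$, the two-sided estimate $v_p(k!)\le v_p(N_k)\le v_p(k!)+C_{p,k}$ gives a per-prime contribution $\le\log(|c|+(n-1)d)$, and the relevant primes lie in a fixed residue class modulo $d$; Dirichlet's theorem on primes in arithmetic progressions then yields $\frac1n\log D_{2,n}\le d/\varphi(d)+o(1)$. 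The restriction to an arithmetic progression is precisely what produces the factor $1/\varphi(\mathrm{den}(b))$; without it (i.e.\ using the ordinary prime number theorem as in your plan) the resulting constant is off by a factor $\varphi(\mathrm{den}(b))$, and this refinement is the idea your plan is missing.
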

\begin{proof} 
Put $$D_{1,n}={\rm{den}}\left(\dfrac{(a)_0}{0!},\ldots,\dfrac{(a)_n}{n!}\right)\enspace, \ \ D_{2,n}={\rm{den}}\left(\dfrac{0!}{(b)_0},\ldots,\dfrac{n!}{(b)_n}\right)\enspace.$$
Then we have $D_n\le D_{1,n}\cdot D_{2,n}$. 
The assertion is deduced from 
\begin{align}
&\limsup_{n\to \infty}\dfrac{1}{n}\log\,D_{1,n}\le \log\,\mu(a)\enspace, \nonumber \\ 
&\limsup_{n\to \infty}\dfrac{1}{n}\log\,D_{2,n}\le \dfrac{{\rm{den}}(b)}{\varphi({\rm{den}}(b))}\enspace. \label{D2n}
\end{align}
First inequality is proved in \cite[Lemma $2.2$]{B}.
Second inequality is shown in {\cite[Lemma $4.1$]{KP}}, however, we explain here this proof  in an abbreviated form, to let our article be self-contained.
This proof is originally indicated by Siegel \cite[p.81]{Siegel}. Put $d={\rm{den}}(b)$, $c=d\cdot b$.
Set $N_k:=c(c+d)\cdots(c+(k-1)d)$ for a non-negative integer $k$.
Let $p$ be a prime number with $p|N_k$. Then the following properties hold.

    \medskip

    $({\rm{a}})$  The integers $p,d$ are coprime and, for any integers $i, \ell$ with $\ell>0$, there exists exactly one integer $\nu$ with $0\le \nu \le p^{\ell}-1$ with $p^{\ell}|c+(i+\nu)d$.

    \medskip

    $({\rm{b}})$ Let $\ell$ be a strictly positive integer with $|c|+(k-1)d<p^{\ell}$. Then $N_k$ is not divisible by $p^{\ell}$.

    \medskip

    $({\rm{c}})$ Set $C_{p,k}:=\lfloor \log(|c|+(k-1)d)/\log(p)\rfloor $. Then we have
    \[
        v_p(k!)=\sum_{\ell=1}^{C_{p,k}}\left \lfloor \dfrac{k}{p^{\ell}}  \right\rfloor\le v_p(N_k)\le \sum_{\ell=1}^{C_{p,k}}\left(1+\left \lfloor \dfrac{k}{p^{\ell}} \right\rfloor  \right)=v_p(k!)+C_{p,k},
    \]
    where $v_p$ denotes the $p$-adic valuation. These relations imply
    \[
        \log\,\left|\dfrac{k!}{(\beta)_k}\right|_p\le
        \begin{cases}
         C_{p,k}\log(p) & \ \ \text{if} \ \ p \mid N_k\\
         0 & \ \ \text{otherwise}\enspace,
        \end{cases}
    \]
   and thus 
    \begin{align*}
        \log\,D_{2,n}=\sum_{p:\text{prime}}\max_{0\le k \le n} \log\,\left|\dfrac{k!}{(\beta)_k}\right|_p&\le \log(|c|+(n-1)d)\pi_{|c|,d}(|c|+(n-1)d)\enspace,
    \end{align*}
    where $\pi_{|c|,d}(x):=\#\{p:\text{prime}~;~ p\equiv |c| \ \text{mod} \ d, \ p<x\}$ for $x>0$. 
Finally, Dirichlet's prime number theorem for arithmetic progressions conclude \eqref{D2n}.  
\end{proof}

\begin{lemma} \label{fundamental}
\label{norme} We have the following norm estimates $($we do hope the similarity of notations is not cause of confusion$) :$
\begin{itemize}
\item[$(i)$]  Let $E_N$ be the subspace of ${{R}}$ consisting of polynomials of degree at most $N$ in $t$. Then for all $n\geq 1$ and {{strictly positive rational number}} $\zeta$, the morphism $S_{n,\zeta}$ satisfies 
$$\| S_{n,\zeta}\|_{E_N,v}\leq \begin{cases}
1  & \ \text{if} \ v\nmid \infty \ \text{and} \ |\zeta|_v\le 1\\
\left\vert \dfrac{(N+\zeta+1)_n}{n!} \right\vert_v   &  \ \text{otherwise}\enspace.
\end{cases}$$ 
It acts diagonally on ${{R}}$ in the sense that each element of the canonical basis consisting of all monomials is an eigenvector for $S_{n,\zeta}$. This map conserves degrees.
\item[$(ii)$]  Let $\zeta$ {{be a strictly positive rational number}}. Then the morphism $\theta_t+\zeta$ satisfies 
$$\| \theta_t+\zeta \|_{E_N,v}\leq 
\begin{cases} 1  & \ \text{if} \ v \ \text{is non-archimedian and} \ |\zeta|_v\le 1\\
\left\vert N+\zeta \right\vert_v & \ \text{otherwise}\enspace.
\end{cases}
$$ 
It acts diagonally on ${{R}}$ in the sense that each element of the canonical basis consisting of all monomials is an eigenvector for $\theta_t+\zeta$. This map conserves degrees.
\item[$(iii)$] Let $\boldsymbol{c}=(c_k)_{k\ge0}$ satisfying $c_k\in \Q\setminus\{0\}$ and $(\ref{recurrence 1})$ for $A(X)=(X+\eta_1)\cdots(X+\eta_r),B(X)=(X+\zeta_1)\cdots(X+\zeta_r)$.
For a {{non-negative integer}} $N$, we put $$D_{\boldsymbol{c},N}={\rm{den}}\left(\dfrac{(1+\zeta_1)_k\cdots (1+\zeta_r)_k}{(\eta_1)_k\cdots(\eta_r)_k}\right)_{0\le k \le N}\enspace.$$ 
The morphism ${\mathcal{T}}_{\bold{c}}$ which is defined in $(\ref{Phi})$ satisfies $$\| {\mathcal{T}}_{\bold{c}} \|_{E_N,v}\leq 
\begin{cases}
e^{o(N)} & \  \text{if} \ v\mid\infty \\
|c^{-1}_0D_{\boldsymbol{c},N}|^{-1}_v & \ \text{otherwise}\enspace,
\end{cases} 
$$ 
for $N\to \infty$. It acts diagonally on ${{R}}$ in the sense that each element of the canonical basis consisting of all monomials is an eigenvector for ${\mathcal{T}}_{\bold{c}}$. This map conserves degrees.
\end{itemize}
\end{lemma}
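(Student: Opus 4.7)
The plan is to observe that all three morphisms act diagonally on the canonical monomial basis $(t^k)_{k\ge 0}$ of ${{R}}$ (viewed as a $K[\alpha_i]_{i\in I}[z]$-algebra, with $\alpha_i,z$ treated as scalars), so that proving the diagonality statement and identifying the eigenvalue $\lambda_k$ on $t^k$ reduces each norm estimate to a bound on $\max_{0\le k\le N}|\lambda_k|_v$. Indeed, if $\phi(t^k)=\lambda_k t^k$ and $P=\sum_{k=0}^N a_k(\alpha_i,z)\,t^k\in E_N$, then $\phi(P)=\sum_k a_k\lambda_k t^k$ and the sup-norm over all monomial coefficients yields $\|\phi(P)\|_v\le (\max_{k\le N}|\lambda_k|_v)\|P\|_v$, with equality attained on a single monomial. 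This reduces (i)--(iii) to three eigenvalue estimates.

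For (i), the eigenvalue is $\lambda_k=(k+\zeta+1)_n/n!$. At an archimedean $v$, each factor $k+\zeta+j$ ($1\le j\le n$) is a positive rational nondecreasing in $k$, so $|\lambda_k|_v$ is maximised at $k=N$, yielding the stated bound. At a non-archimedean $v$ with $|\zeta|_v\le 1$, I rewrite $\lambda_k=\binom{k+\zeta+n}{n}$, and invoke the classical fact that $\binom{x}{n}$ is a $v$-adic integer whenever $x$ is (which is Mahler's theorem, or can be proved from first principles by comparing $v$-adic valuations of the factorial and the Pochhammer symbol). Since $k$, $n$ are rational integers and $|\zeta|_v\le 1$, $x=k+\zeta+n$ is a $v$-adic integer, hence $|\lambda_k|_v\le 1$. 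Part (ii) is entirely analogous: the eigenvalue is $k+\zeta$; monotonicity in $k$ (for archimedean) and the ultrametric inequality $|k+\zeta|_v\le\max(|k|_v,|\zeta|_v)\le 1$ (for non-archimedean with $|\zeta|_v\le 1$) give the claim.

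For (iii), I unwind the recurrence \eqref{recurrence 1} telescopically to obtain
\[
\frac{1}{c_k}=\frac{1}{c_0}\prod_{j=0}^{k-1}\frac{B(j+1)}{A(j)}=\frac{1}{c_0}\cdot\frac{(1+\zeta_1)_k\cdots(1+\zeta_r)_k}{(\eta_1)_k\cdots(\eta_r)_k}\enspace.
\]
At a non-archimedean $v$, the very definition of $D_{\boldsymbol{c},N}$ forces $D_{\boldsymbol{c},N}\cdot(1+\zeta_1)_k\cdots(1+\zeta_r)_k/[(\eta_1)_k\cdots(\eta_r)_k]$ to be an algebraic integer for every $0\le k\le N$, hence of $v$-adic absolute value at most $1$; combining this with the contribution of $c_0^{-1}$ gives the bound $|c_0^{-1}D_{\boldsymbol{c},N}|_v^{-1}$ claimed. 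At an archimedean $v$, the asymptotics $\log(\alpha)_k=\log k!+(\alpha-1)\log k+O(1)$ (valid for fixed rational $\alpha>0$ and $k\to\infty$, and following from Stirling or Euler's formula for the Gamma function) yield cancellation of the $\log k!$ factors between numerator and denominator, leaving $\log|1/c_k|_v=O(\log k)=o(N)$ uniformly in $k\le N$.

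The only genuinely non-formal ingredient is the $v$-adic integrality statement needed in case (i): $\binom{x}{n}$ being a $v$-adic integer whenever $x$ is. Everything else is monotonicity or the ultrametric inequality for local bounds, plus the elementary Stirling-type asymptotic and the combinatorial definition of $D_{\boldsymbol{c},N}$ for the archimedean/non-archimedean contributions in (iii). The diagonality of all three operators being transparent from their definitions, no commutator or structural arguments are required.
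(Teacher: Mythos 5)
Your proposal is correct and takes essentially the same route as the paper: observe that each operator acts diagonally on the monomial basis $(t^k)$, reduce the $E_N$-operator-norm bound to the maximum $|\lambda_k|_v$ over $0\le k\le N$, and then estimate these eigenvalues separately in the archimedean and non-archimedean cases; in part~(iii) both you and the paper unwind the recurrence $(\ref{recurrence 1})$ telescopically to the same Pochhammer ratio $(1+\zeta_1)_k\cdots(1+\zeta_r)_k/[(\eta_1)_k\cdots(\eta_r)_k]$. You actually spell out more than the paper does for~(i) and~(ii) (the paper dismisses them with ``follow from the very definition''), in particular the Mahler-type integrality of $\binom{x}{n}$ on $v$-adic integers when $|\zeta|_v\le 1$, which is exactly the fact implicitly used; and for the archimedean bound in~(iii) you invoke Stirling/Gamma asymptotics where the paper uses a binomial-coefficient majorant followed by Stirling, but both yield $e^{o(N)}$ for the same reason.
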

\begin{proof} 
$(i)$ and $(ii)$ follow from the very definition of $S_{n,\zeta}$ and $\theta_t+\zeta$.
We proof $(iii)$. Since we have $${\mathcal{T}}_{\bold{c}}(t^k)=\dfrac{1}{c_0}\dfrac{B(1)\cdots B(k)}{A(0)\cdots A(k-1)}=\dfrac{1}{c_0}\dfrac{(1+\zeta_1)_k\cdots (1+\zeta_r)_k}{(\eta_1)_k\cdots(\eta_r)_k}\enspace,$$ we get 
\begin{align} \label{upper Phi}
\| {\mathcal{T}}_{\bold{c}} \|_{E_N,v}\le \max_{0\le k \le N} \left(\left|\dfrac{1}{c_0}\dfrac{(1+\zeta_1)_k\cdots (1+\zeta_r)_k}{(\eta_1)_k \cdots(\eta_r)_k}\right|_v\right)\enspace. 
\end{align}
Let $v$ be an archimedian valuation. Since we have $$\dfrac{(1+\zeta_j)_k}{(\eta_j)_k}\le \dfrac{k}{\eta_j}\binom{k+\lceil \zeta_j\rceil}{k} \ \ (k\ge0)\enspace,$$ we obtain
$$\| {\mathcal{T}}_{\bold{c}} \|_{E_N,v}\le \left|\dfrac{N^r}{c_0\cdot \eta_1\cdots \eta_r}\binom{N+\lceil \zeta_1 \rceil}{N} \cdots \binom{N+ \lceil \zeta_r\rceil}{N} \right|_v=e^{o(N)} \ (N\to \infty)\enspace.$$
For a non-archimedian place $v$, by $(\ref{upper Phi})$ and the definition of $D_{\boldsymbol{c},N}$, we get the desire estimate.
\end{proof}

From the preceding lemma, we deduce~: 
\begin{lemma} \label{majonorme} 
For a {{strictly positive integer}} $N$, we put 
$$D_{\boldsymbol{c},N}={\rm{den}}\left(\dfrac{(1+\zeta_1)_k\cdots (1+\zeta_r)_k}{(\eta_1)_k\cdots(\eta_r)_k}\right)_{0\le k \le N}\enspace, \ \ D'_{\boldsymbol{c},N}={\rm{den}}\left(\dfrac{(\eta_1)_k\cdots(\eta_r)_k}{(1+\zeta_1)_k\cdots (1+\zeta_r)_k}\right)_{0\le k \le N}\enspace.$$
We denote by $w$ the place of $\Q$  such that  $v\vert w$. One has~$:$
\begin{itemize}
\item[$(i)$] The polynomial $P_{\ell}(z)=P_{n,\ell}(\boldsymbol{\alpha}\vert z)$ satisfies 
$$\| P_{\ell}(z)\|_v\leq 
\begin{cases}
\exp\left(\dfrac{n[K:_v:\Q_w]}{[K:\Q]}\left[\const+o(1)\rule{0mm}{4mm}\right]\right) & \text{if} \ v\mid\infty \\\rule{0mm}{7mm}
e^{o(1)}|D_{\boldsymbol{c},rmn}|^{-1}_v  \cdot \prod_{j=1}^r \left\vert\mu_{n-1}(\zeta_j)\right\vert_v^{{-{{1}}}} & \text{if} \ v\mid p \enspace,
\end{cases} 
$$ where $o(1)\longrightarrow 0$ for $n\rightarrow\infty$. 
Recall that $P_{\ell}(z)$ is of degree $rn$ in each variable $\alpha_i$, of degree $rmn+\ell$ in $z$ and constant in $t$.
\item[$(ii)$] 
The polynomial $P_{\ell,i,s}(z)=P_{\ell,i,s}(\boldsymbol{\alpha}|z)$ satisfies
$$
\| P_{\ell,i,s}(z)\|_v\leq   \begin{cases}
\exp\left({{\dfrac{{n}[K_v:\Q_w]}{[K:\Q]}}}\left[\const+o(1)\rule{0mm}{4mm}\right]\right) & \text{if} \ v\mid\infty \\
e^{o(1)}|D_{\boldsymbol{c},rmn}\cdot D'_{\boldsymbol{c},rmn}|^{-1}_v\cdot \prod_{j=1}^r\left\vert \mu_{n-1}(\zeta_j)\right\vert_v^{-{{1}}} & \text{if} \ v\mid p \enspace.
\end{cases}
$$
Also, $P_{\ell,i,s}(z)$ is of degree $\leq rmn+\ell$ in $z$, of degree $rn$ in each of the variables $\alpha_j$ except for the index $i$ where it is of degree $rn+1$ 
$($recall that $\psi_{i,s}$ involves multiplication by $[\alpha_i]$$)$.
\item[$(iii)$] For any integer $k\geq 0$, the polynomial $\psi_{i,s}\circ[t^{k+n}](P_{\ell}(t))$ satisfies 
{\small{
$$
\left\vert \psi_{i,s}\circ[t^{k+n}](P_{\ell}(t) {{)}}\right\vert_v\leq
\begin{cases}
\exp\left(\dfrac{n[K_v:\Q_w]}{[K:\Q]}\left[\const+o(1)\rule{0mm}{4mm}\right] \right)& \text{if} \ v\mid\infty \\
e^{o(1)}|D_{\boldsymbol{c},rmn}\cdot D'_{\boldsymbol{c},rmn}|^{-1}_v \cdot  \prod_{j=1}^r\left\vert\mu_{n-1}(\zeta_j)\right\vert_v^{-{{1}}}\rule{0mm}{7mm} & \text{otherwise}\enspace.
\end{cases}
$$}}
By definition, it is a homogeneous polynomial in  just the variables $\boldsymbol{\alpha}$ of degree $\leq rmn+\ell+k+n+1$.
\end{itemize}
\end{lemma}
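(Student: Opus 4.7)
The plan is to decompose each of the polynomials $P_{\ell}(z)$, $P_{\ell,i,s}(z)$ and $\psi_{i,s}\circ[t^{k+n}](P_\ell(t))$ as a composition of the elementary operators $S_{n,\zeta}$, $\theta_t+\zeta$, $\mathcal{T}_{\boldsymbol{c}}$ and $\mathrm{Eval}$ whose operator norms are controlled in Lemma~\ref{fundamental}, and then chain the resulting bounds. At archimedean places, the product of norms will be evaluated asymptotically via Stirling to match precisely the constant $\const$; at non-archimedean places, it will be organized around the denominator quantities $D_{\boldsymbol{c},\cdot}$, $D'_{\boldsymbol{c},\cdot}$ and $\mu_{n-1}(\zeta_j)$, whose behavior is governed by Lemma~\ref{valuation}.

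For part $(i)$, I would start from the key identity that each linear factor of $B(X)$ produces, namely
\begin{align*}
\prod_{j=1}^{n-1}(\theta_t+j+\zeta_k) \;=\; (n-1)!\,S_{n-1,\zeta_k},
\end{align*}
which holds because both sides are diagonal with eigenvalue $(m+\zeta_k+1)_{n-1}$ on $t^m$. Rewriting \eqref{Pl} this way gives
\begin{align*}
P_{\ell}(z) \;=\; \mathrm{Eval}_z\circ\mathcal{T}_{\boldsymbol{c}}\circ\prod_{k=1}^{r}S_{n-1,\zeta_k}\bigl(H_{\ell}(t)\bigr),\qquad H_{\ell}(t)=t^{\ell}\prod_{i=1}^{m}(t-\alpha_i)^{rn}.
\end{align*}
The norm of $H_{\ell}$ in ${{R}}$ is bounded by $2^{rmn}$ at archimedean $v$ (binomial coefficients from expanding each $(t-\alpha_i)^{rn}$) and by $1$ at non-archimedean $v$ (integer coefficients in the $\alpha_i$). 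Applying Lemma~\ref{fundamental}$(i)$ to each $S_{n-1,\zeta_k}$ on $E_{rmn+\ell}$ yields the archimedean factor $\prod_{k=1}^{r}\vert(rmn+\ell+\zeta_k+1)_{n-1}/(n-1)!\vert_v$; Stirling's formula transforms this into $\exp\bigl(rn\,[(rm+1)\log(rm+1)-rm\log(rm)]+o(n)\bigr)$. Combined with the $rmn\log 2$ coming from $H_{\ell}$, this reproduces exactly $n\cdot\const$, multiplied by the place factor $[K_v:\Q_w]/[K:\Q]$ built into $|\cdot|_v$. The operator $\mathcal{T}_{\boldsymbol{c}}$ contributes only $e^{o(n)}$ at archimedean by Lemma~\ref{fundamental}$(iii)$ and the factor $|D_{\boldsymbol{c},rmn}|_v^{-1}$ at non-archimedean (the discrepancy between $rmn+\ell$ and $rmn$ is absorbed in $e^{o(1)}$ since $\ell\le rm$). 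The non-archimedean bound on $S_{n-1,\zeta_j}$ translates into $|\mu_{n-1}(\zeta_j)|_v^{-1}$ via the Siegel-type denominator analysis of Lemma~\ref{valuation}. Since $\mathrm{Eval}_z$ preserves the norm, $(i)$ follows.

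For $(ii)$ and $(iii)$, I would invoke the factorization obtained from \eqref{phi 0}--\eqref{eval i}:
\begin{align*}
\psi_{i,s} \;=\; [\alpha_i]\circ\mathrm{Eval}_{\alpha_i}\circ\mathcal{T}_{\boldsymbol{c}}^{-1}\circ\prod_{j=1}^{s}(\theta_t+\gamma_j),
\end{align*}
and apply it either to $\bigl(P_{\ell}(z)-P_{\ell}(t)\bigr)/(z-t)$ (for $(ii)$) or to $t^{k+n}P_{\ell}(t)$ (for $(iii)$). The additional operator norms introduced beyond those already bounded in $(i)$ are that of $\prod_{j=1}^{s}(\theta_t+\gamma_j)$, polynomial in $n$ and absorbed into $e^{o(n)}$ at archimedean by Lemma~\ref{fundamental}$(ii)$, and that of $\mathcal{T}_{\boldsymbol{c}}^{-1}$, which acts on $t^k$ by multiplication by $c_k$ and at non-archimedean produces precisely the extra factor $|D'_{\boldsymbol{c},rmn}|_v^{-1}$ appearing in the statement. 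The specialization $\mathrm{Eval}_{\alpha_i}$ replaces $t$ by the indeterminate $\alpha_i$ without increasing the coefficient norm, while $[\alpha_i]$ raises the $\alpha_i$-degree by one (consistent with the assertion that $P_{\ell,i,s}(z)$ has degree $rn+1$ in $\alpha_i$). The degree and homogeneity claims follow because every operator involved preserves the total $(t,\boldsymbol{\alpha})$-degree $rmn+\ell$ of $H_{\ell}$.

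The main obstacles I foresee are threefold: $(a)$ the Stirling bookkeeping at archimedean places must land exactly on the constant $rm\log 2+r(\log(rm+1)+rm\log((rm+1)/rm))$, which requires using the sharp asymptotic $\binom{rmn+O(1)}{n-1}^{r}$ rather than a crude majoration; $(b)$ the non-archimedean analysis requires carefully identifying the denominator quantity $\mu_{n-1}(\zeta_j)$ that packages the $p$-adic behaviour of $S_{n-1,\zeta_j}$ and reconciling it with the estimate furnished by Lemma~\ref{valuation}; and $(c)$ the $o(1)$ error term in $n$ must be controlled uniformly in $\ell\in\{0,\dots,rm\}$, in the index $i$ and in the place $v$ (the dependence on $v$ entering only through the normalization $[K_v:\Q_w]/[K:\Q]$).
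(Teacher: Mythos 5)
Your proposal follows essentially the same line as the paper: write $P_\ell$, $P_{\ell,i,s}$ and $\psi_{i,s}\circ[t^{k+n}](P_\ell)$ as compositions of $\mathrm{Eval}$, $\mathcal{T}_{\boldsymbol{c}}^{\pm1}$, $\prod_k S_{n-1,\zeta_k}$ and $\prod_j(\theta_t+\zeta_j)$ applied to $H_\ell$, chain the operator-norm bounds of Lemma~\ref{fundamental} (the paper phrases the $2^{rmn}$ factor via an auxiliary substitution morphism $\Gamma(y_i)=t-\alpha_i$, but the estimate is identical), and finish with Stirling at archimedean $v$ and denominator bounds at non-archimedean $v$. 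One small correction: the inequality $\bigl\vert (rmn+rm+1+\zeta_j)_{n-1}/(n-1)!\bigr\vert_v\le\vert\mu_{n-1}(\zeta_j)\vert_v^{-1}$ governing the non-archimedean norm of $S_{n-1,\zeta_j}$ comes from \cite[Lemma~2.2]{B}, not Lemma~\ref{valuation}, which only concerns $D_{\boldsymbol{c},N}$ and $D'_{\boldsymbol{c},N}$.
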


\begin{proof} 
Let $I$ be of cardinality $m$,  $E_N$ be the sub-vector space of $K[y_1,\ldots,y_m, z,t]$ consisting of polynomials of degree at most $N$ in the variables $y_i$ and $\Gamma: E_N\longrightarrow {{R}}$ the morphism defined by $\Gamma(Q(y_1,\ldots,y_m,z,t))=Q(t-\alpha_1,\ldots,t-\alpha_m,z,t)$. 
Set $B_{n,l}(\boldsymbol{y},t)=t^{\ell}\prod_{i=1}^my_i^{rn}$,
since $B_{n,l}$ is a monomial, its norm $\| B_{n,l}\|_v=1$. 
By definition, one has $$P_{\ell}( z)=\ev_{t\rightarrow z}\circ {\mathcal{T}}_{\bold{c}}\bigcirc_{j=1}^r S_{n-1,\zeta_j}\circ\Gamma(B_{n,l})\enspace,$$ and thus, by sub-multiplicatively of the endomorphism norm,
\begin{align*}
\| P_{\ell}(z)\|_v
&\leq 2^{\varepsilon_v rmn [K_v:\Q_w]/[K:\Q]}{{e^{\varepsilon_v o(n)}}} |D_{\boldsymbol{c},N}|^{-1}_v\cdot
\begin{cases} 
\prod_{j=1}^r \left\vert \dfrac{(rmn+\ell+1+\zeta_j)_{n-1}}{(n-1)!}\right\vert_v & \ \ \text{if} \ v\mid\infty\\
& \\
\prod_{j=1}^r \left\vert \dfrac{(rmn+\ell+1+\zeta_j)_{n-1}}{(n-1)!}\right\vert^{{{\delta_v(\zeta_j)}}}_v & \ \ \text{otherwise}\enspace,
\end{cases}
\end{align*}
where 
$\delta_v(\zeta_j)=1$ if $|\zeta_j|_v>1$ and $0$ otherwise
(one can choose $N=rn$ while using \cite[Lemma $5.2$ $(iv)$]{DHK2} and $N=r(n+1)m+\ell$ for Lemma $\ref{fundamental}$ $(i)$ and $(iii)$ using $\ell\leq rm$, and note that the original polynomial is a constant in $z$ so the evaluation map is an isometry). 

\vspace{0.5\baselineskip} 

In the ultrametric case, we have the claimed result
$$\left|\dfrac{(rmn+rm+1+\zeta_j)_{n-1}}{(n-1)!}\right|_v\le |\mu_{n-1}(\zeta_j)|^{-1}_v\enspace,$$
where $\mu_{n}(\zeta_j):=\prod_{\substack{q:\rm{prime} \\ q\mid {\rm{den}}(\zeta_j)}}q^{n+\lceil n/(q-1) \rceil}$ \,
$(${\it confer} \rm{\cite[Lemma $2.2$]{B}}$)$) and Lemma $\ref{fundamental}$ $(iii)$ with $N=rmn+rm$.

Left to prove is the archimedian case, we put $Y=\max_j\{\lceil \zeta_j\rceil\}$. Then we have~:
$$\dfrac{(rmn+rm+1+\zeta_j)_{n-1}}{(n-1)!}\le \binom{(rm+1)n+rm+Y}{n-1}\enspace,$$
and taking into account the standard Stirling formula, we get
$$\begin{array}{lcl}\displaystyle \frac{1}{n}\log
\binom{(rm+1)n+rm+Y}{n-1} & = & \log(rm+1)+rm\log\left(\frac{rm+1}{rm}\right)+o(1)
\enspace,
\end{array}$$
and putting these together, one gets
$$\| P_{\ell}(z)\|_v\leq \exp\left(\dfrac{n[K_v:\Q_w]}{[K:\Q]}\left[\const\right]+o(1)\right)\enspace,$$
where $o(1)\longrightarrow 0$ $(n\rightarrow +\infty)$.

\bigskip

Let $E_0=K[\alpha_i,z]$ the sub-vector space of ${{R}}$ consisting of constants in $t$. Define $$\Theta: E_0\longrightarrow {{R}}; \ Q\mapsto \dfrac{Q(\alpha_i,z)-Q(\alpha_i,t)}{z-t}\enspace.$$
By definition, $P_{\ell,i,s}(z)=\psi_{i,s}\circ \Theta(P_{\ell}(z))$ and $$\psi_{i,s}=[\alpha_i]\circ \ev_{{{t\rightarrow \alpha_i}}}\circ{\mathcal{T}}_{\bold{c}}^{-1}\circ (\theta_t+\zeta_r)\circ \cdots \circ (\theta_t+\zeta_{r-s+1}) \enspace.$$
Using \cite[Lemma $5.2$ $(i), (ii)$]{DHK2} with $N=rn$ and  \cite[Lemma $5.2$ $(iii)$]{DHK2} and Lemma $\ref{fundamental}$  $(iii)$ for $N=rm(n+1)$, and since $rn=\exp(n\cdot o(1))$, one gets $(ii)$.
Finally, we have $$\psi_{i,s}\circ[t^{k+n}](P_{\ell}( t))=[\alpha_i]\circ\ev_{{{t\rightarrow \alpha_i}}} \circ{\mathcal{T}}_{\bold{c}}^{-1}\circ (\theta_t+\zeta_r)\circ \cdots \circ (\theta_t+\zeta_{r-s+1})  \circ
[t^{k+n}](P_{\ell}(t))\enspace.$$ 
Again, using \cite[Lemma $5.2$]{DHK2} and Lemma~\ref{norme}, one gets $(iii)$.
\end{proof}
{{Recall that if $P$ is a homogeneous polynomial in some variables $y_i,i\in I$, for any point $\boldsymbol{\alpha}=(\alpha_i)_{i\in I}\in K^{\card(I)}$ where $I$ is any finite set, and $\Vert\cdot\Vert_v$ stands for the sup norm in $K_v^{\card(I)}$, with $$C_v(P)= (\deg(P)+1)^{\frac{\varepsilon_v{{[K_v:\Q_w]}}(\card(I))}{{{d}}}}\enspace,$$
one has
\begin{equation}\label{estimhomo}
\vert P(\boldsymbol{\alpha})\vert_v\leq C_v(P) \Vert P\Vert_v\cdot {\Vert {\boldsymbol{\alpha}\Vert_v^{\deg(P)}}}\enspace.
\end{equation}
So, the preceding lemma yields trivially estimates for the $v$-adic norm of the above given polynomials.}}
\begin{lemma} \label{upper jyouyonew}
Let $n$ be a positive integer, $\beta\in K$ with $\| \boldsymbol{\alpha}\|_v<\vert \beta\vert_v$. 
Then we have for all $1\le i\leq m,0\le \ell\leq rm, 0 \le s\leq r-1$,
\begin{align*}
|R_{\ell,i,s}(\beta)|_v &\le \| \boldsymbol{\alpha}\|_v^{rm(n+1)}
\cdot \left(\frac{\|\boldsymbol{\alpha}\|_v}{\vert \beta\vert_v}\right)^{n+1}\cdot\left(\frac{\varepsilon_v\vert \beta\vert_v}{\vert \beta\vert_v-\| \boldsymbol{\alpha}\|_v}+(1-\varepsilon_v)\right)\\
&\cdot 
\begin{cases}
\exp\left(n{{\dfrac{[K_v:\Q_w]}{[K:\Q]}}}\left[\const+o(1)\rule{0mm}{4mm}\right]\right) & \text{if} \ v\mid\infty \\
e^{o(1)}|D_{\boldsymbol{c},rmn}\cdot D'_{\boldsymbol{c},rmn}|^{-1}_v \cdot \prod_{j=1}^r \left\vert\mu_{n-1}(\zeta_j)\right\vert_v^{-{{1}}}\rule{0mm}{7mm} & \text{otherwise}\enspace.
\end{cases}
\enspace
\end{align*}
\end{lemma}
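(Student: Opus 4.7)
The plan is to start from the explicit Laurent expansion of $R_{\ell,i,s}(z)$ at infinity that came out of the Pad\'e construction in Proposition~\ref{GHG pade}. Since the Pad\'e approximants satisfy ${\rm{ord}}_{\infty} R_{\ell,i,s}(z)\ge n+1$ and, more precisely,
\begin{align*}
R_{\ell,i,s}(z)=\sum_{k=0}^{\infty}\dfrac{\psi_{i,s}\bigl(t^{k+n}P_{\ell}(t)\bigr)}{z^{k+n+1}}\enspace,
\end{align*}
the assumption $\|\boldsymbol{\alpha}\|_{v}<|\beta|_v$ will be used first to guarantee that this series converges $v$-adically at $z=\beta$, and then to sum it geometrically in the archimedean case or to take the maximum over $k$ in the ultrametric case.

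Next I would bound each coefficient $\psi_{i,s}\bigl(t^{k+n}P_{\ell}(t)\bigr)$, viewed as a homogeneous polynomial in the formal variables $\boldsymbol{\alpha}$ of degree $\le rmn+\ell+k+n+1\le rm(n+1)+k+n+1$, using Lemma~\ref{majonorme}$(iii)$ to control its $v$-adic coefficient norm and the standard homogeneity estimate \eqref{estimhomo} to pass to the evaluation at $\boldsymbol{\alpha}$. This yields, for every $k\ge 0$,
\begin{align*}
\bigl|\psi_{i,s}(t^{k+n}P_{\ell}(t))\bigr|_v \;\le\; M_v(n)\cdot\|\boldsymbol{\alpha}\|_v^{rm(n+1)+k+n+1}\enspace,
\end{align*}
where $M_v(n)$ is exactly the arithmetic/size factor appearing on the right-hand side of the target inequality (the $\exp(\cdots)$ bound at archimedean places, and the product of $|D_{\boldsymbol{c},rmn}\cdot D'_{\boldsymbol{c},rmn}|_v^{-1}$ with $\prod_j|\mu_{n-1}(\zeta_j)|_v^{-1}$ at finite places), the only harmless by-product being polynomial-in-$(k+n)$ factors from the $C_v$ constants in \eqref{estimhomo}, which can be absorbed into the $e^{o(1)}$ and $\exp(no(1))$ terms since the series below will concentrate near $k=0$.

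Plugging these bounds into the series gives
\begin{align*}
|R_{\ell,i,s}(\beta)|_v\;\le\; M_v(n)\cdot\|\boldsymbol{\alpha}\|_v^{rm(n+1)}\cdot\Bigl(\tfrac{\|\boldsymbol{\alpha}\|_v}{|\beta|_v}\Bigr)^{n+1}\cdot\sum_{k\ge 0}\Bigl(\tfrac{\|\boldsymbol{\alpha}\|_v}{|\beta|_v}\Bigr)^{k}\enspace,
\end{align*}
where the last sum has to be interpreted in the triangle-inequality sense for $v\mid\infty$, and in the ultrametric sense for $v\nmid\infty$. In the archimedean case the geometric series converges to $|\beta|_v/(|\beta|_v-\|\boldsymbol{\alpha}\|_v)$, producing exactly the factor $\varepsilon_v|\beta|_v/(|\beta|_v-\|\boldsymbol{\alpha}\|_v)$ when $\varepsilon_v=1$. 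In the non-archimedean case the ultrametric inequality bounds the sum by its largest term, which (thanks to $\|\boldsymbol{\alpha}\|_v<|\beta|_v$) is the $k=0$ contribution, giving the constant $1=1-\varepsilon_v$. Combining these two cases into the single expression $\tfrac{\varepsilon_v|\beta|_v}{|\beta|_v-\|\boldsymbol{\alpha}\|_v}+(1-\varepsilon_v)$ yields the claim.

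The main obstacle, in my view, is not the summation itself but the bookkeeping for the coefficient bound: one must check that the estimate coming from Lemma~\ref{majonorme}$(iii)$ is genuinely uniform in $k$ up to the explicit homogeneity factor $\|\boldsymbol{\alpha}\|_v^{k}$, and that the $(k+n)$-dependent polynomial prefactors from \eqref{estimhomo}—together with Stirling-type losses arising when applying $\mathcal{T}_{\bold{c}}^{-1}$ and the $\theta_t+\zeta_j$'s on $[t^{k+n}]P_{\ell}(t)$—are indeed absorbed both into $e^{o(1)}$ at the finite places and into the $o(1)$ in the archimedean exponent, uniformly in $\ell,i,s$ as $n\to\infty$.
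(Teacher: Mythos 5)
Your proposal matches the paper's own proof: you start from the same Laurent expansion $R_{\ell,i,s}(z)=\sum_{k\ge0}\psi_{i,s}(t^{k+n}P_{\ell}(t))/z^{k+n+1}$, bound each coefficient with Lemma~\ref{majonorme}~$(iii)$ together with the homogeneity estimate \eqref{estimhomo}, and then sum the geometric series (taking a maximum in the ultrametric case). The caution you raise about absorbing the $k$-dependent polynomial prefactors into the $o(1)$ terms is legitimate and is resolved exactly as you indicate, by the geometric decay of $(\|\boldsymbol{\alpha}\|_v/|\beta|_v)^k$.
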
  
\begin{proof}
By the definition of $P_{\ell}(z)$, as formal power series, we have
$$
R_{\ell,i,s}(z)=\sum_{k=0}^{\infty}\frac{\psi_{i,s}(t^{k+n}P_{\ell}(t))}{z^{k+n+1}} \enspace.
$$
Using the triangle inequality, the fact that $\ell\leq rm$ and Lemma~\ref{majonorme} $(iii)$  and inequality \eqref{estimhomo},
\begin{align*}
\vert R_{\ell,i,s}(\beta)\vert_v&\leq \| \boldsymbol{\alpha}\|_v^{rm(n+1)}\sum_{k=0}^{\infty}\left(\frac{\|\boldsymbol{\alpha}\|_v}{\vert \beta\vert_v}\right)^{n+1+k}\\
&\cdot 
\begin{cases}
\exp\left(n\dfrac{[K_v:\Q_w]}{[K:\Q]}\left[\const+o(1)\rule{0mm}{4mm}\right]\right) & \text{if} \ v\mid\infty \\
e^{o(1)} |D_{\boldsymbol{c},rmn}\cdot D'_{\boldsymbol{c},rmn}|^{-1}_v \cdot \prod_{j=1}^r\left\vert\mu_{n-1}(\zeta_j)\right\vert_v^{-{{1}}}\rule{0mm}{7mm} & \text{otherwise}\enspace,
\end{cases}
\end{align*}
and the lemma follows using geometric series summation.
\end{proof}

\section{Proof of Theorem $\ref{hypergeometric}$}\label{proof}
We use the same notations as in Section $5$.
To prove Theorem $\ref{hypergeometric}$, we shall prove the following theorem.
\begin{theorem} \label{thm 2}
For $v\in {{\mathfrak{M}}}_K$, we define the constants
\begin{align*}
c(x,v)&=\varepsilon_{v}{{\frac{[K_v:\Q_w]}{[K:\Q]}}}\left(\const\right)+(1-\varepsilon_{v})\sum_{j=1}^r\log\vert\mu(\zeta_j)\vert^{-1}_{v} \enspace,
\end{align*}
where $p_v$ is the rational prime under $v$ if $v$ is non-archimedian.
We also define
\begin{align*}
\mathbb{A}_v(\boldsymbol{\eta},\boldsymbol{\zeta},\boldsymbol{\alpha},\beta)&=\log\vert \beta \vert_{v_0}-(rm+1)\log\| \boldsymbol{\alpha}\|_{v_0}-c(x,v_0)+(1-\varepsilon_v)\limsup_{n\to \infty}\dfrac{1}{n}
\log\,|D_{\boldsymbol{c},rmn}\cdot D'_{\boldsymbol{c},rmn}|^{-1}_v\enspace,\\
U_v(\boldsymbol{\eta},\boldsymbol{\zeta},\boldsymbol{\alpha},\beta)&=rm{\mathrm{h}}_{v}(\boldsymbol{\alpha},\beta)+c(x,v)-(1-\varepsilon_{v}) \limsup_{n\to \infty}\dfrac{1}{n}\log\,|D_{\boldsymbol{c},rmn}|_v 
\enspace,
\end{align*}
and
\begin{align*}
V_v(\boldsymbol{\eta},\boldsymbol{\zeta},\boldsymbol{\alpha},\beta) &=\displaystyle  \log\vert\beta\vert_{v_0}-rm{\mathrm{h}}(\boldsymbol{\alpha},\beta)-{{(rm+1)}}\log\| \boldsymbol{\alpha}\|_{v_0}+rm\log\|(\boldsymbol{\alpha},\beta)\|_{v_0}\\
&-\left(\const\right)\\
&-\sum_{j=1}^r\left(\log\mu(\eta_j)+2\log\mu(\zeta_j)+\dfrac{{\rm{den}}(\zeta_j){\rm{den}}(\eta_j)}{\varphi({\rm{den}}(\zeta_j))\varphi({\rm{den}}(\eta_j))} \right)\enspace.
\end{align*}
Let $v_0$ be a place in $\mathfrak{M}_K$,  either archimedean or non-archimedean, such that
$V_{v_0}(\boldsymbol{\eta},\boldsymbol{\zeta},\boldsymbol{\alpha},\beta)>0$.
Then the functions $F_s(\alpha_i/\beta), \,\,0\leq s \leq r-1$ converge around $\alpha_j/\beta$ in $K_{v_0}$,  $1\leq j \leq m$ and
for any positive number $\varepsilon$ with $\varepsilon<V_{v_0}(\boldsymbol{\eta},\boldsymbol{\zeta},\boldsymbol{\alpha},\beta)$, there exists an effectively computable positive number $H_0$ depending on $\varepsilon$ and the given data such that the following property holds.
For any ${{\boldsymbol{\lambda}}}:=({{\lambda_0}},{{\lambda_{i,s}}})_{\substack{1\le i \le m \\ {{0}}\le s \le {{r-1}}}} \in K^{rm+1} \setminus \{ \bold{0} \}$ satisfying $H_0\le {\mathrm{H}}({{\boldsymbol{\lambda}}})$, then we have
\begin{align*}
\left|{{\lambda_0}}+\sum_{i=1}^m\sum_{s=0}^{r-1}{{\lambda_{i,s}}}F_{s}(x,\alpha_i/\beta)\right|_{v_0}>C(\boldsymbol{\eta},\boldsymbol{\zeta},\boldsymbol{\alpha},\beta,\varepsilon){\mathrm{H}}_{v_0}({{\boldsymbol{\lambda}}}) {\mathrm{H}}({{\boldsymbol{\lambda}}})^{-\mu(\boldsymbol{\eta},\boldsymbol{\zeta},\boldsymbol{\alpha},\beta,\varepsilon)}\enspace,
\end{align*}
where 
\begin{align*}
&\mu(\boldsymbol{\eta},\boldsymbol{\zeta},\boldsymbol{\alpha},\beta,\varepsilon):=
\dfrac{\mathbb{A}_{v_0}(\boldsymbol{\eta},\boldsymbol{\zeta},\boldsymbol{\alpha},\beta)+{{U}}_{v_0}(\boldsymbol{\eta},\boldsymbol{\zeta},\boldsymbol{\alpha},\beta)}{V_{v_0}(\boldsymbol{\eta},\boldsymbol{\zeta},\boldsymbol{\alpha},\beta)-\epsilon} \enspace,\\
&C(\boldsymbol{\eta},\boldsymbol{\zeta},\boldsymbol{\alpha},\beta,\varepsilon)=\exp\left(-{{\left(\frac{\log(2)}{V_{v_0}(\boldsymbol{\eta},\boldsymbol{\zeta},\boldsymbol{\alpha},\beta)-\varepsilon}+1\right)}}(\mathbb{A}_{v_0}(\boldsymbol{\eta},\boldsymbol{\zeta},\boldsymbol{\alpha},\beta)+{{U}}_{v_0}(\boldsymbol{\eta},\boldsymbol{\zeta},\boldsymbol{\alpha},\beta)\right)\enspace.
\end{align*}
\end{theorem}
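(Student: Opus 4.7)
The plan is to follow the classical Pad\'e approximation route: use the $rm+1$ approximants $\vec{p}_\ell(\beta)$ constructed in Proposition~\ref{GHG pade} as a family of linear forms in $1$ and the values $F_s(\alpha_i/\beta)$, and convert them, via the non-vanishing determinant of Proposition~\ref{non zero det} and the product formula, into an effective lower bound for any putative linear form in these values. Let $n$ be a large positive integer (to be optimised at the end), and abbreviate $P_\ell=P_\ell(\beta)$, $P_{\ell,i,s}=P_{\ell,i,s}(\beta)$, $R_{\ell,i,s}=R_{\ell,i,s}(\beta)$.

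Fix $\boldsymbol{\lambda}=(\lambda_0,\lambda_{i,s})\in K^{rm+1}\setminus\{\boldsymbol{0}\}$ and set $\Lambda=\lambda_0+\sum_{i,s}\lambda_{i,s}F_s(\alpha_i/\beta)$. I introduce the $K$-linear combinations
\[
L_\ell \;=\; \lambda_0 P_\ell+\sum_{i,s}\lambda_{i,s}P_{\ell,i,s}\qquad(0\le \ell\le rm),
\]
and rewrite them using the Pad\'e identity $P_{\ell,i,s}=P_\ell\,F_s(\alpha_i/\beta)-R_{\ell,i,s}$ as
\[
L_\ell \;=\; P_\ell\,\Lambda \;-\; \sum_{i,s}\lambda_{i,s}R_{\ell,i,s}.
\]
Since $\det\bigl(\vec{p}_\ell(\beta)\bigr)_{0\le\ell\le rm}=\Delta\in K\setminus\{0\}$ by Proposition~\ref{non zero det}, the map $\boldsymbol{\lambda}\mapsto(L_\ell)_{0\le\ell\le rm}$ is injective, so there is at least one index $\ell_0$ with $L_{\ell_0}$ a nonzero element of $K$, to which the product formula can be applied.

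The estimates of Section~5 then give local bounds for $|L_{\ell_0}|_v$. At each place $v\ne v_0$, Lemma~\ref{majonorme} (together with the denominator control of Lemma~\ref{valuation}) yields
\[
|L_{\ell_0}|_v \;\le\; C_v\cdot \max(|\lambda_0|_v,|\lambda_{i,s}|_v)\cdot\|\vec{p}_{\ell_0}(\beta)\|_v,
\]
while at $v_0$ I use the Pad\'e identity directly together with Lemma~\ref{upper jyouyonew} to obtain
\[
|L_{\ell_0}|_{v_0} \;\le\; |P_{\ell_0}|_{v_0}|\Lambda|_{v_0}+\|\boldsymbol{\lambda}\|_{v_0}\max_{i,s}|R_{\ell_0,i,s}|_{v_0},
\]
in which the remainder is exponentially small in $n$, of rate $\mathbb{A}_{v_0}(\boldsymbol{\eta},\boldsymbol{\zeta},\boldsymbol{\alpha},\beta)$. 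The product formula $\prod_v|L_{\ell_0}|_v=1$ converts the bounds at $v\ne v_0$ into a lower bound of the shape $e^{n(V_{v_0}-o(1))}\mathrm{H}_{v_0}(\boldsymbol{\lambda})/\mathrm{H}(\boldsymbol{\lambda})$ for $|L_{\ell_0}|_{v_0}$, once the definitions of $\mathbb{A}_{v_0}$, $U_{v_0}$ and $V_{v_0}$ are unfolded.

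Taking $n$ to be the least integer with $n(V_{v_0}-\varepsilon)\ge \log\mathrm{H}(\boldsymbol{\lambda})+\log 2$ makes the remainder term in the $v_0$ upper bound negligible against this main term, so that $|P_{\ell_0}|_{v_0}|\Lambda|_{v_0}$ is bounded below by half of it; dividing by $|P_{\ell_0}|_{v_0}$, whose size is controlled by $\mathbb{A}_{v_0}+U_{v_0}$, produces the announced measure with exponent $\mu=(\mathbb{A}_{v_0}+U_{v_0})/(V_{v_0}-\varepsilon)$ and constant $C(\boldsymbol{\eta},\boldsymbol{\zeta},\boldsymbol{\alpha},\beta,\varepsilon)$. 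The linear independence assertion of Theorem~\ref{hypergeometric} is then an immediate corollary: a nontrivial relation $\Lambda=0$ would violate the measure once $\mathrm{H}(\boldsymbol{\lambda})$ is sufficiently large. The principal obstacle I anticipate is the bookkeeping: reconciling the archimedean versus ultrametric split in Lemmas~\ref{majonorme}--\ref{upper jyouyonew}, carrying the denominators $D_{\boldsymbol{c},rmn}$ and $D'_{\boldsymbol{c},rmn}$ cleanly through the product formula (which is where Lemma~\ref{valuation} converts them into the explicit factors $\mu(\eta_j)$, $\mu(\zeta_j)$ and the Euler-totient ratios appearing in $V_{v_0}$), and ensuring that the threshold $H_0$ on $\mathrm{H}(\boldsymbol{\lambda})$ is indeed effectively computable from the given data.
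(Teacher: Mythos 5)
Your proposal is correct and follows essentially the same strategy as the paper: construct the Pad\'e approximants at $z=\beta$, use the non-vanishing of the determinant $\Delta$ from Proposition~\ref{non zero det} to guarantee that the map $\boldsymbol{\lambda}\mapsto(L_\ell)_\ell$ is injective (hence some $L_{\ell_0}\neq 0$), and then combine the local bounds from Lemmas~\ref{valuation}, \ref{majonorme} and \ref{upper jyouyonew} with the product formula and an optimal choice of $n$. The only difference is that the paper does not carry out the product-formula optimisation itself but instead verifies that the data $\{F_s(\alpha_i/\beta)\}$ together with the estimates of Section 5 satisfy the hypotheses of a general black-box criterion, namely \cite[Proposition 5.6]{DHK3}, which packages precisely the argument you sketch — including the split between main term $|P_{\ell_0}\Lambda|_{v_0}$ and remainder, the choice of $n$ roughly proportional to $\log\mathrm{H}(\boldsymbol{\lambda})/(V_{v_0}-\varepsilon)$, and the explicit constants $\mu$ and $C$. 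Your version is more self-contained but leaves the bookkeeping (the precise passage from $D_{\boldsymbol{c},rmn}$, $D'_{\boldsymbol{c},rmn}$ to $\mu(\eta_j)$, $\mu(\zeta_j)$ via Lemma~\ref{valuation}, and the exact matching of exponents) informal; the paper delegates that once-and-for-all to the cited proposition, which is where the identities $\mathbb{A}_{v_0}+U_{v_0}=V_{v_0}+\sum_v U_v$ and $V_{v_0}=\mathbb{A}_{v_0}-\lim_n\frac{1}{n}\sum_{v\neq v_0}F_v(n)$ are exploited.
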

\begin{proof}
By Proposition $\ref{non zero det}$, the matrix ${{\mathrm{M}}}_n=\begin{pmatrix}
P_{\ell}(\beta)\\
P_{\ell,i,s}(\beta)
\end{pmatrix}$ 
with entries in $K$ is invertible. 
By Lemma~\ref{majonorme} $(i)$ together with inequality~(\ref{estimhomo}),
\begin{align*}\displaystyle
\log\|  P_{\ell}(\beta)\|_{v} & \leq \displaystyle \varepsilon_v\left(n{\frac{[K_v:\Q_w]}{[K:\Q]}}\left[\const+o(1)\rule{0mm}{4mm}\right]\right) \\
& +(1-\varepsilon_v)\left[\log\,|D_{\boldsymbol{c},rmn}|^{-1}_v+\sum_{j=1}^r \log\left\vert\mu_n(\zeta_j)\right\vert_v^{-{{1}}}\right] + +(rmn+\ell){\mathrm{h}}_{v}(\boldsymbol{\alpha},\beta)\\
& \leq   \displaystyle n\left(rm{\mathrm{h}}_{v}(\boldsymbol{\alpha},\beta)+c(x,v)\right)+o(n)
\\ & = \displaystyle {{U}}_v(\boldsymbol{\eta},\boldsymbol{\zeta},\boldsymbol{\alpha},\beta)n+o(n)\enspace.
\end{align*}
Similarly, using this time Lemma~\ref{majonorme} $(ii)$ and inequality \eqref{estimhomo}, 
$$
\log\, \|P_{\ell,i,s}(\beta)\|_{v}\leq  \displaystyle n\left(rm{\mathrm{h}}_{v}(\boldsymbol{\alpha},\beta)+c(x,v)\right)+f_v(n)\enspace,
$$
where
\begin{align*}
f_v:\N\rightarrow \R_{\ge0}; \ \ n\longmapsto rm{\mathrm{h}}_{v}(\boldsymbol{\alpha},\beta)+(1-\varepsilon_v)\log\,|D_{\boldsymbol{c},rmn}\cdot D'_{\boldsymbol{c},rmn}|^{-1}_v    \enspace.
\end{align*}
We define $$\displaystyle {{F_v}}(\boldsymbol{\alpha},\beta):\N\rightarrow \R_{\ge0}; \ n\mapsto n\left(rm{\mathrm{h}}_{v}(\boldsymbol{\alpha},\beta)+c(x,v)\right)+f_v(n)\enspace.$$
Since on the other hand, Lemma~\ref{upper jyouyonew} ensures 
{\small{\begin{align*}
-\log\vert  R_{\ell,i,s}( \beta)\vert_{v_0}&
\leq n\log\vert \beta \vert_{v_0}-(rm+1)n \log\| \boldsymbol{\alpha}\|_{v_0}-nc(x,v_0)+(1-\varepsilon_v)\limsup_{n\to \infty}\dfrac{1}{n}\log|D_{\boldsymbol{c},rmn}\cdot D'_{\boldsymbol{c},rmn}|^{-1}_v+o(n)\\
&=\mathbb{A}_{v_0}(\boldsymbol{\eta},\boldsymbol{\zeta},\boldsymbol{\alpha},\beta)n+o(n)\enspace.
\end{align*}}}
Using Lemma $\ref{valuation}$, we have 
\begin{align*}
&{\displaystyle{\lim_{n\to \infty}}}\dfrac{1}{n}{\displaystyle{\sum_{v\in \mathfrak{M}_K}}}f_v(n)=\sum_{j=1}^r\left(\log\,\mu(\eta_j)+\log\,\mu(\zeta_j)+\dfrac{{\rm{den}}(\zeta_j){\rm{den}}(\eta_j)}{\varphi({\rm{den}}(\zeta_j))\varphi({\rm{den}}(\eta_j)})\right)\enspace,\\
&\sum_{v\in \mathfrak{M}_K}c(x,v)=\const+\sum_{j=1}^r{\rm{log}}\,\mu(\zeta_j)\enspace,
\end{align*}
we conclude
\begin{align*}
\mathbb{A}_{v_0}(\boldsymbol{\eta},\boldsymbol{\zeta},\boldsymbol{\alpha},\beta)-\lim_{n\to \infty}\dfrac{1}{n}\sum_{v\neq v_0}{{F_v}}(\boldsymbol{\alpha},\beta)(n)=V(\boldsymbol{\eta},\boldsymbol{\zeta},\boldsymbol{\alpha},\beta)\enspace.
\end{align*}
Applying \cite[Proposition $5.6$]{DHK3} for $\{\theta_{i,s}:=F_{s}(\alpha_i/\beta)\}_{\substack{1\le i \le m \\ 0\le s \le r-1}}$ and the above data, 
we obtain the assertions of Theorem $\ref{thm 2}$. 
\end{proof} 
\begin{proof}[Proof of Theorem $\ref{hypergeometric}$.] 
We use the same notations as in Theorem $\ref{hypergeometric}$ and Theorem $\ref{thm 2}$. 
Put $\boldsymbol{\eta}=(a_1+1,\ldots,a_r+1),\boldsymbol{\zeta}=(b_1,\ldots,b_{r-1},1)$. 
Then we have
$$V_{v_0}(\boldsymbol{\alpha},\beta)=V_{v_0}(\boldsymbol{\eta},\boldsymbol{\zeta},\boldsymbol{\alpha},\beta)\enspace.$$
Combining with \eqref{F0} and \eqref{Fs}, Theorem $\ref{thm 2}$ yields the assertion of Theorem $\ref{hypergeometric}$.
\end{proof}

\bibliography{}

\begin{scriptsize}
\begin{minipage}[t]{0.38\textwidth}

Sinnou David,
\\Institut de Math\'ematiques
\\de Jussieu-Paris Rive Gauche
\\CNRS UMR 7586,
Sorbonne Universit\'{e}
\\
4, place Jussieu, 75005 Paris, France\\
\& CNRS UMI 2000 Relax\\
 Chennai Mathematical Institute\\
 H1, SIPCOT IT Park, Siruseri\\
 Kelambakkam 603103, India \\\\
\end{minipage}
\begin{minipage}[t]{0.32\textwidth}
Noriko Hirata-Kohno, 
\\hirata@math.cst.nihon-u.ac.jp
\\Department of Mathematics
\\College of Science \& Technology
\\Nihon University
\\Kanda, Chiyoda, Tokyo
\\101-8308, Japan\\\\
\end{minipage}
\begin{minipage}[t]{0.3\textwidth}
Makoto Kawashima,
\\kawashima.makoto@nihon-u.ac.jp
\\Department of Liberal Arts \\and Basic Sciences
\\College of Industrial Engineering
\\Nihon University
\\Izumi-chou, Narashino, Chiba
\\275-8575, Japan\\\\
\end{minipage}

\end{scriptsize}

\end{document}